\let\withclearpagecommands\a   
\tikzset{join/.code=\tikzset{after node path={%
\ifx\tikzchainprevious\pgfutil@empty\else(\tikzchainprevious)%
edge[every join]#1(\tikzchaincurrent)\fi}}}
\makeatother\pagestyle{headings}
\tikzset{>=stealth',every on chain/.append style={join},
         every join/.style={->}}
\tikzstyle{labeled}=[execute at begin node=$\scriptstyle,
\def\ps@headings{%
\def\@oddhead{\mbox{}\scriptsize\rightmark \hfil \thepage}%
\def\@evenhead{\scriptsize\thepage \hfil \leftmark\mbox{}}%
\def\@oddfoot{}%
\def\@evenfoot{}}
\makeatother \pagestyle{headings}
\renewcommand\qedsymbol{$\blacksquare$}
\newtheorem{theorem}{Theorem}\setcounter{theorem}{0}
\newtheorem{lemma}[theorem]{Lemma}
\newtheorem{proposition}[theorem]{Proposition}
\newtheorem{example}[theorem]{Example}
\newtheorem{remark}[theorem]{Remark}
\newtheorem{conjecture}[theorem]{Conjecture}
\newtheorem{definition}[theorem]{Definition}
\newcommand{\eproposition}{\hfill$\square$}
\newcommand{\epropositionproof}{\hfill\qedsymbol}
\newcommand{\elemma}{\hfill$\square$}
\newcommand{\elemmaproof}{\hfill\qedsymbol}
\newcommand{\etheorem}{\hfill$\square$}
\newcommand{\etheoremproof}{\hfill\qedsymbol}
\newcommand{\edefinition}{\hfill$\triangle$}
\newcommand{\eexample}{\hfill$\square$}
\newcommand{\econjecture}{\hfill$\square$}
\newcommand{\eremark}{\hfill$\triangle$}
\newcommand{\vect}[1]{ \bm{#1} }
\newcommand{\set}[1]{\mathcal{#1}}
\newcommand{\vgam}{ \vect{\gamma} }
\newcommand{\setEfull}{ \mathcal{E} }
\newcommand{\setF}{ \mathcal{F} }
\newcommand{\sfN}{ \mathsf{N} }
\newcommand{\sR}{\mathbb{R}}
\newcommand{\sZ}{\mathbb{Z}}
\newcommand{\sZpp}{\mathbb{Z}_{\geq 1}}
\newcommand{\matr}[1]{ \bm{#1} }
\newcommand{\defeq}{ \triangleq }
\newcommand{\fr}[1]{ f_{ \mathrm{r}, #1 } }
\newcommand{\fc}[1]{ f_{ \mathrm{c}, #1 } }
\newcommand{\setx}[1]{ \set{X}_{#1} }
\newcommand{\setX}{ \set{X} }
\newcommand{\vvu}{\bm{u}}
\newcommand{\vui}{\vvu_{i}}
\newcommand{\vuj}{\vvu_{j}}
\newcommand{\supp}{\mathrm{supp}}
\newcommand{\ZBn}[1]{\perm_{ \mathrm{B}, #1 }}
\newcommand{\perm}{ \mathrm{perm} }
\newcommand{\permb}{ \mathrm{perm}_{\mathrm{B}} }
\newcommand{\permbM}[1]{\perm_{ \mathrm{B}, #1 }}
\newcommand{\perms}{ \mathrm{perm}_{\mathrm{S}} }
\newcommand{\permscs}{ \mathrm{perm}_{\mathrm{scS}} }
\newcommand{\permscsM}[1]{ \mathrm{perm}_{\mathrm{scS},#1} }
\newcommand{\pmtheta}{p_{\mtheta}}
\newcommand{\vsigma}{ \bm{\sigma} }
\newcommand{\setC}{\set{C}}
\newcommand{\setP}{\set{P}}
\newcommand{\setR}{\set{R}}
\newcommand{\setS}{\set{S}}
\newcommand{\vgamRCp}{\vgam_{\setR,\setC}}
\newcommand{\gamRCp}{\gamma_{\setR,\setC}}
\newcommand{\CM}[1]{\Cgen{M}{#1}}
\newcommand{\Ctwo}[1]{\Cgen{2}{#1}}
\newcommand{\Cgen}[2]{C_{#1,#2}}
\newcommand{\CBM}[1]{\CBgen{M}{#1}}
\newcommand{\CBtwo}[1]{\CBgen{2}{#1}}
\newcommand{\CBgen}[2]{C_{\mathrm{B},#1,#2}}
\newcommand{\CscSgen}[2]{C_{\mathrm{scS},#1,#2}}
\newcommand{\mtheta}{\matr{\theta}}
\newcommand{\isigmaoi}{\bigl( i,\sigma_1(i) \bigr)}
\newcommand{\tvgam}{\tilde{\vgam}}
\newcommand{\tgam}{\tilde{\gamma}}
\newcommand{\sZp}{\sZ_{\geq 0}}
\newcommand{\vtheta}{\matr{\theta}}
\newcommand{\mP}{\matr{P}}
\newcommand{\mU}{\matr{U}}
\newcommand{\sfG}{\mathsf{G}}
\newcommand{\FBthe}{F_{ \mathrm{B}, \mtheta }}
\newcommand{\UBthe}{U_{ \mathrm{B}, \mtheta }}
\newcommand{\HBthe}{H_{ \mathrm{B} }}
\newcommand{\FscSthe}{F_{ \mathrm{scS}, \mtheta }}
\newcommand{\UscSthe}{U_{ \mathrm{scS}, \mtheta }}
\newcommand{\HscSthe}{H_{ \mathrm{scS} }}
\newcommand{\setA}{ \set{A} }
\newcommand{\suppmtheta}{\operatorname{supp}(\mtheta)}
\newcommand{\hvgamRCp}{ \hat{\vgam}_{\setR,\setC} }
\newcommand{\hgamRCp}{ \hat{\gamma}_{\setR,\setC} }
\newcommand{\vgamzo}{\vgam}
\newcommand{\gamzo}{\gamma}
\newcommand{\HG}{H_{ \mathrm{G} }}
\newcommand{\FGthe}{F_{ \mathrm{G}, \mtheta }}
\newcommand{\UGthe}{U_{ \mathrm{G}, \mtheta }}
\newcommand{\vp}{\bm{p} }
\newcommand{\PiA}[1]{ \Pi_{\setA(#1)} }
\newcommand{\Gamnthe}{ \Gamma_{n}(\vtheta) }
\newcommand{\GamMnthe}{ \Gamma_{M,n}(\vtheta) }
\newcommand{\Gampnthe}{ \Gamma'_{n}(\vtheta) }
\newcommand{\GamMn}{ \Gamma_{M,n} }
\newcommand{\Gamtwon}{ \Gamma_{2,n} }
\newcommand{\tPsi}{\tilde{\Psi}}
\newcommand{\eg}{\textit{e.g.}}
\newcommand{\ie}{\textit{i.e.}}
\newcommand{\e}{\mathrm{e}}
\newcommand{\bigformulatop}[3]{%
  \begin{figure*}[!t]
    \normalsize
    \setcounter{equation}{#1}
    #3

    \setcounter{equation}{#2}
    \hrulefill
    \vspace*{4pt}
  \end{figure*}
}
\begin{document}








\title{Degree-$M$ Bethe and Sinkhorn Permanent Based Bounds on
       the Permanent of a Non-negative Matrix}


\author{Yuwen~Huang,~\IEEEmembership{Graduate Student Member, IEEE}, 
        Navin~Kashyap,~\IEEEmembership{Senior Member, IEEE}, 
        and Pascal~O.~Vontobel,~\IEEEmembership{Fellow, IEEE}%
\thanks{
      Manuscript received 9 September 2023; accepted 13 February 2024; date of current version 6 March 2024.
      The work of N.~Kashyap was supported in part by the grant
      DST/INT/RUS/RSF/P-41/2021 from the Department of Science and Technology,
      Government of India.
      The work of P.~O.~Vontobel was supported in part by a grant from the
      Research Grants Council of the Hong Kong Special Administrative Region,
      China (Project No.\ CUHK 14208319).
      An earlier version of this paper was presented in part at the IEEE
      International Symposium on Information Theory (ISIT), Taipei, Taiwan,
      June~2023~[DOI: 10.1109/ISIT54713.2023.10206878].
      \textit{(Corresponding author: Yuwen Huang.)}
}
\thanks{Y.~Huang is with the 
          Department of Information Engineering,
          The Chinese University of Hong Kong, Hong Kong SAR
          (e-mail: hy018@ie.cuhk.edu.hk).}%
\thanks{N.~Kashyap is with the 
        Department of Electrical Communication Engineering, 
        Indian Institute of Science, Bengaluru 560012, 
        India (e-mail: nkashyap@iisc.ac.in).}
\thanks{P.~O.~Vontobel is with the 
        Department of Information Engineering and 
        the Institute of Theoretical Computer Science and Communications,
        The Chinese University of Hong Kong, Hong Kong SAR
        (e-mail: pascal.vontobel@ieee.org).}%
\thanks{Communicated by X. Wang, Associate Editor for Networking and Computation.}
}


\maketitle


\begin{abstract}
   The permanent of a non-negative square matrix can be well approximated by
  finding the minimum of the Bethe free energy function associated with some
  suitably defined factor graph; the resulting approximation to the permanent
  is called the Bethe permanent. Vontobel gave a combinatorial
  characterization of the Bethe permanent via degree-$M$ Bethe permanents,
  which are based on degree-$M$ covers of the underlying factor graph. In this
  paper, we prove a degree-$M$-Bethe-permanent-based lower bound on the
  permanent of a non-negative matrix, which solves a conjecture proposed by
  Vontobel in [IEEE Trans. Inf. Theory, Mar.\ 2013]. We also prove a
  degree-$M$-Bethe-permanent-based upper bound on the permanent of a
  non-negative matrix. In the limit $M \to \infty$, these lower and upper
  bounds yield known Bethe-permanent-based lower and upper bounds on the
  permanent of a non-negative matrix. Moreover, we prove similar results for
  an approximation to the permanent known as the (scaled) Sinkhorn permanent.
\end{abstract}


\begin{IEEEkeywords}
  Permanent,
  non-negative matrix,
  graphical model,
  Bethe approximation,
  Sinkhorn approximation.
\end{IEEEkeywords}


\section{Introduction}
\label{sec:introduction:1}


For any $n \in \sZpp$, define $[n] \defeq \{ 1, 2, \ldots, n \}$ and let
$\setS_{[n]}$ be the set of all $n!$ bijections from $[n]$ to $[n]$. Recall
the definition of the permanent of a real square matrix (see, \eg,
\cite{minc_marcus_1984}).


\begin{definition}\label{sec:1:def:13}
	Let $n \in \sZpp$ and let
  $ \mtheta \defeq \bigl( \theta(i,j) \bigr)_{\! i,j\in [n]}$ be a real matrix of
  size $n \times n$. The permanent of $ \mtheta $ is defined to be
  \begin{align}
    \perm(\mtheta)
      &\defeq
         \sum_{\sigma \in \setS_{[n]}} 
           \prod_{i \in [n]}
             \theta\bigl( i, \sigma(i) \bigr) \, .
               \label{eq:def:permanent:1} \\[-1.00cm] \nonumber
  \end{align}
  \edefinition
\end{definition}


In this paper, we consider only matrices $\mtheta$ where there is at least one
$\sigma \in \setS_{[n]}$ such that
$\prod_{i \in [n]} \theta\bigl( i, \sigma(i) \bigr) \neq 0$.



Computing the exact permanent is in the complexity class \#P, where \#P is the
set of the counting problems associated with the decision problems in the
class NP. Notably, already the computation of the permanent
of matrices that contain only zeros and ones is \#P-complete~\cite{Valiant1979}.


While being able to compute the exact permanent of a matrix might be desirable
in some circumstances, many applications require only a good approximation to
the permanent. In the rest of this paper, we focus on the important special
case of approximating the permanent of a non-negative real matrix, \ie, a
matrix where all entries are non-negative real numbers. In particular, we
study two graphical-model-based methods for approximating the permanent of a
non-negative matrix. The first method is motivated by Bethe-approximation /
sum-product-algorithm (SPA) based methods proposed in~\cite{Chertkov2008,
  Huang2009} and studied in more detail in~\cite{Vontobel2013a}. The second
method is motivated by Sinkhorn's matrix scaling algorithm~\cite{Linial2000}.


The main idea of these methods is to construct a factor graph~\cite{Forney2001, Loeliger2004a} whose partition
function equals the permanent.
Consequently, the permanent can be obtained by
finding the minimum of the Gibbs free energy function associated with the
factor graph. However, because finding the minimum of the Gibbs free energy
function is in general intractable unless $n$ is small, one considers the
minimization of some functions that suitably approximate the Gibbs free energy
function.


A first approximation to the Gibbs free energy function is given by the Bethe
free energy function~\cite{Yedidia2005}; the resulting approximation to
$\perm(\mtheta)$ is called the Bethe permanent and denoted by
$ \permb(\mtheta)$. Vontobel~\cite{Vontobel2013a} showed that for the relevant
factor graph, the Bethe free energy function is convex and that the SPA
converges to its minimum.
(This is in contrast to the situation for factor
graphs in general, where the Bethe free energy function associated with a
factor graph is not convex and the SPA is not guaranteed to converge to its
minimum.) The Bethe permanent $\permb(\mtheta)$ satisfies
\begin{align}
  1 
    &\leq 
       \frac{\perm(\mtheta)}{ \permb(\mtheta)} 
     \leq 
        2^{n/2}\, .
        \label{eq:ratio:permanent:bethe:permanent:1}
\end{align}
The first inequality in~\eqref{eq:ratio:permanent:bethe:permanent:1} was
proven by Gurvits~\cite{Gurvits2011} with the help of an inequality of
Schrijver~\cite{Schrijver1998}. Later on, an alternative proof based on
results from real stable polynomials was given by Straszak and Vishnoi
in~\cite{Straszak2019} and Anari and Gharan in~\cite{Anari2021}. The second
inequality in~\eqref{eq:ratio:permanent:bethe:permanent:1} was conjectured by
Gurvits~\cite{Gurvits2011} and proven by Anari and
Rezaei~\cite{Anari2019}. Both inequalities
in~\eqref{eq:ratio:permanent:bethe:permanent:1} are the best possible given
only the knowledge of the size of the matrix~$\mtheta$.

A second approximation to the Gibbs free energy function is given by the
Sinkhorn free energy function; the resulting approximation to $\perm(\mtheta)$
is called the Sinkhorn permanent and denoted by $\perms(\mtheta)$. (The
Sinkhorn free energy function was formally defined in~\cite{Vontobel2014}
based on some considerations in~\cite{Huang2009}. Note that
  $\perms(\mtheta)$ appears already in~\cite{Linial2000} as an approximation
  of $\perm(\mtheta)$ without being called the Sinkhorn permanent.) The
Sinkhorn free energy function is easily seen to be convex and it can be
efficiently minimized with the help of Sinkhorn's matrix scaling
algorithm~\cite{Linial2000}, thereby motivating its name. In this paper, we
will actually work with the scaled Sinkhorn permanent
$\permscs(\mtheta) \defeq \e^{-n} \cdot \perms(\mtheta)$, a variant of the
Sinkhorn permanent $\perms(\mtheta)$ that was introduced
in~\cite{N.Anari2021}. The scaled Sinkhorn permanent $\permscs(\mtheta)$
satisfies
\begin{align}
  \e^n \cdot \frac{n!}{n^n}
    &\leq 
       \frac{\perm(\mtheta)}{\permscs(\mtheta)}
     \leq 
       \e^n \, .
         \label{eq:ratio:permanent:scaled:Sinkhorn:permanent:1}
\end{align}
The first inequality in~\eqref{eq:ratio:permanent:scaled:Sinkhorn:permanent:1}
follows from van der Waerden's inequality proven by
Egorychev~\cite{Egorychev1981} and Falikman~\cite{Falikman1981}, whereas the
second equality follows from a relatively simple to prove inequality. Both
inequalities~\eqref{eq:ratio:permanent:scaled:Sinkhorn:permanent:1} are the
best possible given only the knowledge of the size of the
matrix $\mtheta$. Note that the value on the LHS
of~\eqref{eq:ratio:permanent:scaled:Sinkhorn:permanent:1} is approximately
$\sqrt{2 \pi n}$.


While the definition of $\perm(\mtheta)$ in~\eqref{eq:def:permanent:1} clearly
has a combinatorial flavor (\eg, counting weighted perfect matchings in a
complete bipartite graph with two times $n$ vertices), it is, a priori, not
clear if a combinatorial characterization can be given for $\permb(\mtheta)$
and $\permscs(\mtheta)$. However, this is indeed the case.


Namely, using the results in~\cite{Vontobel2013}, a finite-graph-cover-based
combinatorial characterization was given for $\permb(\mtheta)$
in~\cite{Vontobel2013a}. Namely,
\begin{align*}
  \permb(\mtheta)
    &= \limsup_{M \to \infty} \,
         \permbM{M}(\mtheta)\, ,
\end{align*}
where the degree-$M$ Bethe permanent is defined to be
\begin{align*}
  \permbM{M}(\mtheta)
    &\defeq
      \sqrt[M]{
        \bigl\langle
          \perm( \mtheta^{\uparrow \mP_{M}} )
        \bigr\rangle_{\mP_{M} \in \tPsi_{M} }
     } \, , 
\end{align*}
where the angular brackets represent an arithmetic average and where
$\mtheta^{\uparrow \mP_{M}}$ represents a degree-$M$ cover of $\mtheta$
defined by a collection of permutation matrices $\mP_{M}$. (See
Definition~\ref{def:matrix:degree:M:cover:1} for the details.)


In this paper we offer a combinatorial characterization of the scaled Sinkhorn
permanent $\permscs(\mtheta)$. Namely,
\begin{align*}
  \permscs(\mtheta)
    &= \limsup_{M \to \infty} \, 
         \permscsM{M}(\mtheta)\, , 
\end{align*}
where $\permscs(\mtheta)$ was defined
in~\cite{N.Anari2021}, and where
the degree-$M$ scaled Sinkhorn permanent is defined to be \\[-0.35cm]
\begin{align}
  \permscsM{M}(\mtheta)
    &\defeq 
       \sqrt[M]{
         \perm
           \bigl(
             \langle
               \mtheta^{\uparrow \mP_{M}}
             \rangle_{\mP_{M} \in \tPsi_{M} }
           \bigr)
       } \nonumber \\
    &= \sqrt[M]{
         \perm( \mtheta \otimes \mU_{M,M})
       } \, ,\nonumber
\end{align}
where $\otimes$ denotes the Kronecker product of two matrices and where
$ \mU_{M,M} $ is the matrix of size $M \times M$ with all entries equal to
$1/M$.  (See Definition~\ref{def:Sinkhorn:degree:M:cover:1} and
Proposition~\ref{sec:1:prop:14} for the details.)~\footnote{Note that the quantity
  $\sqrt[M]{ \perm( \mtheta \otimes \mU_{M,M}) }$ appears in the literature at
  least as early as~\cite{Bang1976,Friedland1979}. These papers show that for
  $M = 2^s$, $s \in \sZp$, it holds that
  $\sqrt[M]{ \perm( \mtheta \otimes \mU_{M,M}) } \leq \perm(\mtheta)$, which,
  in our notation, implies $1 \leq \perm(\mtheta) / \permscsM{M}(\mtheta)$ for
  $M = 2^s$, $s \in \sZp$.}%
~\footnote{Barvinok~\cite{Barvinok2010} considered combinatorial expressions
  in the same spirit as $\sqrt[M]{ \perm( \mtheta \otimes \mU_{M,M}) }$, but
  the details and the limit are different.}


\begin{figure}[t]
  \begin{center}
    \hspace{-2.25cm}\begin{tikzpicture}
\pgfmathsetmacro{\Ws}{0.9};
\begin{pgfonlayer}{main}
  \node (ZB1)     at (0,0) [] {\scriptsize$\qquad\qquad\qquad\quad\left. \ZBn{M}(\vtheta) \right|_{M = 1} = \perm(\vtheta)$};
  \node (ZBM)     at (0,\Ws) [] {\scriptsize$\ZBn{M}(\vtheta)$};
  \node (ZBinfty) at (0,2*\Ws) {\scriptsize$\qquad\qquad\qquad\qquad
  \left. \ZBn{M}(\vtheta) \right|_{M \to \infty} 
  = \permb(\vtheta)$};
  \draw[]
    (ZB1) -- (ZBM) -- (ZBinfty);
\end{pgfonlayer}
\end{tikzpicture}
    \hspace{-1.75cm}\begin{tikzpicture}
\pgfmathsetmacro{\Ws}{0.9};
\begin{pgfonlayer}{main}
  \node (ZscS1)     at (0,0) [] {\scriptsize$\qquad\qquad\qquad\quad\left. 
                               \permscsM{M}(\vtheta) \right|_{M = 1} 
                                 = \perm(\vtheta)$};
  \node (ZscSM)     at (0,\Ws) [] {\scriptsize$\permscsM{M}(\vtheta)$};
  \node (ZscSinfty) at (0,2*\Ws) {\scriptsize$\qquad\qquad\qquad\qquad
  \left. \permscsM{M}(\vtheta) \right|_{M \to \infty} 
  = \permscs(\vtheta)$};
  \draw[]
    (ZscS1) -- (ZscSM) -- (ZscSinfty);
\end{pgfonlayer}
\end{tikzpicture}
  \end{center}
  \caption{Combinatorial characterizations of the 
    Bethe and scaled Sinkhorn permanents.}
  \label{fig:combinatorial:characterization:1}
  \vspace{0.00cm} 
\end{figure}



These combinatorial characterizations of the Bethe and scaled Sinkhorn
permanent are summarized in Fig.~\ref{fig:combinatorial:characterization:1}.


Besides the quantities $\permbM{M}(\mtheta)$ and $\permscsM{M}(\mtheta)$ being
of inherent interest, they are particularly interesting toward understanding
the ratios $\perm(\mtheta) / \permb(\mtheta)$ and
$\perm(\mtheta) / \permscs(\mtheta)$ for general non-negative matrices or for
special classes of non-negative matrices.


For example, as discussed in~\cite[Sec.~VII]{Vontobel2013}, one can consider
the equation
\begin{align*}
  \hspace{-0.25cm}
  \frac{\perm(\mtheta)}{\permb(\mtheta)}
    &= \frac{\perm(\mtheta)}{\permbM{2}(\mtheta)}
       \!\cdot\!
       \frac{\permbM{2}(\mtheta) }{\permbM{3}(\mtheta)}
       \!\cdot\!
       \frac{\permbM{3}(\mtheta) }{\permbM{4}(\mtheta)}
       \!\cdots \, .
\end{align*}
If one can give bounds on the ratios appearing on the RHS of this equation,
then one obtains bounds on the ratio on the LHS of this equation.



Alternatively, one can consider the equation
\begin{align}
  \underbrace{
    \frac{\perm(\mtheta)}
         {\permb(\mtheta)}
  }_{\text{\ding{192}}}
    &= \underbrace{
         \frac{\perm(\mtheta)}
              {\permbM{2}(\mtheta)}
       }_{\text{\ding{193}}}
       \cdot
       \underbrace{
         \frac{\permbM{2}(\mtheta)}
              {\permb(\mtheta)}
       }_{\text{\ding{194}}} \, .
         \label{eq:perm:ratios:1}
\end{align}
Based on numerical and analytical results, Ng and Vontobel~\cite{KitShing2022}
concluded that it appears that for many classes of matrices of interest, the
ratio~\ding{193} behaves similarly to the ratio~\ding{194}. Therefore,
understanding the ratio~\ding{193} goes a long way toward understanding the
ratio~\ding{192}. (Note that the ratio~\ding{193} is easier to analyze than
the ratio~\ding{192} for some classes of matrices of interest.)


Results similar in spirit to the above results were given for the Bethe
approximation to the partition function of other graphical models (see, \eg,
\cite{NIPS2012_4649,Vontobel:16:1,Csikvari2022}).


\ifx\withclearpagecommands\x
\clearpage
\fi

\section{Main Contributions}
\label{sec:main:constributions:1}


This section summarizes our main contributions. All the details and proofs
will be given later on.


Let $n, M \in \sZpp$, let $\mtheta$ be a fixed non-negative matrix of size
$n \times n$, and let $\GamMn$ be the set of doubly stochastic matrices of
size $n \times n$ where all entries are integer multiples of $1/M$. Moreover,
for $\vgam \in \GamMn$, let
$\mtheta^{ M \cdot \vgam } \defeq \prod_{i,j \in [n]} \bigl( \theta(i,j)
\bigr)^{\! M \cdot \gamma(i,j)}$. (Note that in this expression, all exponents
are non-negative integers.) Throughout this paper, we consider $ \mtheta $
such that $ \perm(\mtheta) $, $ \permbM{M}(\mtheta) $, and
$ \permscsM{M}(\mtheta) $ are positive real-valued.


\begin{lemma}
  \label{lem: expression of permanents w.r.t. C}

  There are collections of non-negative real numbers
  \mbox{$\bigl\{ \CM{n}( \vgam ) \bigr\}_{\vgam \in \GamMn}$,\!
  $\bigl\{ \CBM{n}( \vgam ) \bigr\}_{\vgam \in \GamMn}$,\!
  $\bigl\{ \CscSgen{M}{n}( \vgam ) \bigr\}_{\vgam \in \GamMn}$} such that
  \begin{align}
    \bigl( \perm(\mtheta) \bigr)^{\! M} 
      &= \sum_{\vgam \in \GamMn} 
           \mtheta^{ M \cdot \vgam }
           \cdot
           \CM{n}( \vgam )\, , 
             \label{sec:1:eqn:43} \\
    \bigl( \permbM{M}(\mtheta) \bigr)^{\! M} 
      &= \sum_{\vgam \in \GamMn} 
           \mtheta^{ M \cdot \vgam }
           \cdot
           \CBM{n}( \vgam )\, ,
             \label{sec:1:eqn:190}  \\
    \bigl( \permscsM{M}(\mtheta) \bigr)^{\! M}
      &= \sum_{\vgam \in \GamMn}
           \mtheta^{ M \cdot \vgam } \cdot \CscSgen{M}{n}( \vgam )\, .
             \label{sec:1:eqn:168} \\[-0.75cm] \nonumber
  \end{align}
  \elemma
\end{lemma}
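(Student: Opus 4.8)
The plan is to prove all three identities by a single template: expand each left-hand side as a sum over a collection of combinatorial objects, observe that every such object contributes a monomial of the form $\mtheta^{M \cdot \vgam}$ for some $\vgam \in \GamMn$, and then collect terms by $\vgam$. The coefficients that emerge are (weighted) counts of combinatorial objects and are therefore non-negative and independent of $\mtheta$, which is exactly what the statement asserts. The key structural fact, common to all three cases, is the following type map. Consider any product of $nM$ entries of $\mtheta$ arising from a perfect matching on $nM$ left and $nM$ right nodes, where the nodes are grouped into $n$ left-blocks and $n$ right-blocks of size $M$ each. Collecting equal factors, such a product equals $\prod_{i,j \in [n]} \theta(i,j)^{c(i,j)}$, where $c(i,j)$ counts how many matched edges run from left-block $i$ to right-block $j$. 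Since the matching is a bijection and each block has exactly $M$ nodes, the non-negative integer matrix $\vect{C} = (c(i,j))_{i,j}$ satisfies $\sum_{j} c(i,j) = M$ for every $i$ and $\sum_{i} c(i,j) = M$ for every $j$; hence $\vgam \defeq \vect{C}/M$ lies in $\GamMn$, and the product equals $\mtheta^{M \cdot \vgam}$.

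For \eqref{sec:1:eqn:43} I would expand $\bigl( \perm(\mtheta) \bigr)^{M} = \sum_{(\sigma_1, \dots, \sigma_M)} \prod_{k \in [M]} \prod_{i \in [n]} \theta\bigl(i, \sigma_k(i)\bigr)$ over all $M$-tuples with $\sigma_k \in \setS_{[n]}$. Each tuple induces $\vect{C}$ with $c(i,j) = \bigl| \{ k : \sigma_k(i) = j \} \bigr|$, so the term equals $\mtheta^{M \cdot \vgam}$ with $\vgam = \vect{C}/M \in \GamMn$; defining $\CM{n}(\vgam)$ as the number of $M$-tuples of prescribed type $\vect{C} = M \cdot \vgam$ gives \eqref{sec:1:eqn:43} with non-negative integer coefficients. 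For \eqref{sec:1:eqn:168} I would use the second expression in \eqref{eq:def:degree:M:scaled:Sinkhorn:permanent:1}, namely $\bigl( \permscsM{M}(\mtheta) \bigr)^{M} = \perm(\mtheta \otimes \mU_{M,M})$. Expanding this permanent over $\tau \in \setS_{[nM]}$ and using that every entry in block $(i,j)$ of $\mtheta \otimes \mU_{M,M}$ equals $\theta(i,j)/M$, each $\tau$ contributes $M^{-nM} \cdot \mtheta^{M \cdot \vgam}$ with $\vgam \in \GamMn$ as above; setting $\CscSgen{M}{n}(\vgam) \defeq M^{-nM} \cdot \bigl| \{ \tau : \vect{C}(\tau) = M \cdot \vgam \} \bigr|$ then yields \eqref{sec:1:eqn:168}.

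For \eqref{sec:1:eqn:190}, recall from \eqref{eq:def:degree:M:Bethe:permanent:1} that $\bigl( \permbM{M}(\mtheta) \bigr)^{M}$ is the arithmetic average of $\perm(\mtheta^{\uparrow \mP_{M}})$ over $\mP_{M} \in \tPsi_{M}$. For each fixed cover, Definition~\ref{def:matrix:degree:M:cover:1} describes $\mtheta^{\uparrow \mP_{M}}$ as the block matrix whose $(i,j)$-block equals $\theta(i,j)$ times a permutation matrix; expanding $\perm(\mtheta^{\uparrow \mP_{M}})$ over perfect matchings of the cover, each surviving matching again contributes $\mtheta^{M \cdot \vgam}$ with $\vgam \in \GamMn$. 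Averaging over $\mP_{M}$ and collecting by $\vgam$ defines $\CBM{n}(\vgam)$ as the average over $\mP_{M} \in \tPsi_{M}$ of the number of matchings of type $M \cdot \vgam$, which is non-negative. I expect the Bethe case to be the main obstacle: one must invoke Definition~\ref{def:matrix:degree:M:cover:1} to pin down $\mtheta^{\uparrow \mP_{M}}$ and $\tPsi_{M}$, verify that only matchings respecting the block permutation-matrix structure survive with nonzero weight, and confirm that the averaging over $\tPsi_{M}$ preserves the monomial structure so that the same index set $\GamMn$ serves all three identities. Once the common type map $\vgam = \vect{C}/M$ is established, the permanent and scaled-Sinkhorn identities are routine, and non-negativity of all three coefficient families is automatic since each is a (scaled) count or an average of counts.
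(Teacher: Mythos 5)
Your proposal is correct, and it takes a genuinely more self-contained route than the paper for two of the three identities. For \eqref{sec:1:eqn:43} you do exactly what the paper does: expand $\bigl(\perm(\mtheta)\bigr)^M$ over $M$-tuples of permutations and collect terms by type, with $\CM{n}(\vgam)$ the number of tuples decomposing $\vgam$. For \eqref{sec:1:eqn:190} and \eqref{sec:1:eqn:168}, however, the paper does not argue directly: it fixes explicit closed-form coefficients in advance (Definition~\ref{sec:1:lem:43}), namely $\CBM{n}(\vgam) = (M!)^{2n-n^2}\prod_{i,j}\frac{(M-M\gamma(i,j))!}{(M\gamma(i,j))!}$ and $\CscSgen{M}{n}(\vgam) = M^{-nM}\,(M!)^{2n}/\prod_{i,j}(M\gamma(i,j))!$, and then invokes \cite[Lemma~29]{Vontobel2013} for the Bethe identity and \cite[Theorem~2.1]{Friedland1979} for the scaled Sinkhorn identity. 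Your route instead expands $\perm(\mtheta^{\uparrow \mP_{M}})$ matching-by-matching and averages over $\tPsi_{M}$, and expands $\perm(\mtheta \otimes \mU_{M,M})$ over $\setS_{[nM]}$, defining the coefficients as averaged or scaled counts of matchings of prescribed block type; the single ``type map'' observation replaces both citations, and since existence and non-negativity are all the lemma claims, this is a complete proof. What your version does not deliver, and the paper's does, are the closed forms above, on which the rest of the paper leans: the recursions in Lemmas~\ref{sec:1:lem:15:copy:2} and~\ref{sec:1:lem:30}, and hence Theorem~\ref{thm: inequalities for the coefficients}, manipulate those factorial expressions directly. The gap is easy to close on the Sinkhorn side, since the number of $\tau \in \setS_{[nM]}$ of block type $M \cdot \vgam$ is $(M!)^{2n}/\prod_{i,j}(M\gamma(i,j))!$ (choose, within each left and right block, which elements go to or come from each opposite block, then match within each block pair), so your coefficient coincides with the paper's. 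On the Bethe side, your coefficient is by construction the average number of matchings of type $M \cdot \vgam$ over all degree-$M$ covers, which is precisely the quantity computed in the cited Lemma~29; recovering the closed form would require reproducing that computation rather than merely pointing to non-negativity.
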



With the help of the inequalities
  in~\eqref{eq:ratio:permanent:bethe:permanent:1}
  and~\eqref{eq:ratio:permanent:scaled:Sinkhorn:permanent:1}, along with some
  results that are established in this paper, we can make the following
  statement about the coefficients $\CM{n}( \vgam )$, $\CBM{n}( \vgam )$, and
  $\CscSgen{M}{n}( \vgam )$ in Lemma~\ref{lem: expression of permanents
    w.r.t. C}.


\begin{theorem}
  \label{thm: inequalities for the coefficients}

  For every $\vgam \in \GamMn$, the coefficients $\CM{n}( \vgam )$,
  $\CBM{n}( \vgam )$, and $\CscSgen{M}{n}( \vgam )$ satisfy
  \begin{align}
    1 
      &\leq 
         \frac{ \CM{n}(\vgam) }{\CBM{n}(\vgam) }
       \leq
         \Bigl( 2^{n/2} \Bigr)^{\! M-1}\, ,
           \label{sec:1:eqn:58} \\
    \Biggl( \frac{M^{M}}{M!} \Biggr)^{\!\!\! n}
    \cdot
    \biggl(
      \frac{ n! }{ n^{n} }
    \biggr)^{\!\! M-1} 
      &\leq
         \frac{\CM{n}(\vgam)}{\CscSgen{M}{n}(\vgam)} 
       \leq 
         \Biggl( \frac{M^{M}}{M!} \Biggr)^{\!\! n}\, .
           \label{sec:1:eqn:180}
  \end{align}
  \etheorem
\end{theorem}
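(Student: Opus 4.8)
The plan is to first read off explicit combinatorial meanings of the three coefficients and to record one structural recursion. Throughout, fix $\vgam \in \GamMn$, put $N \defeq M\vgam$ (an integer matrix with entries $N_{ij} = M\gamma(i,j)$ and all row/column sums equal to $M$), and abbreviate $c_M(N) \defeq \CM{n}(\vgam)$, $b_M(N) \defeq \CBM{n}(\vgam)$, $s_M(N) \defeq \CscSgen{M}{n}(\vgam)$. Expanding $\bigl( \perm(\mtheta) \bigr)^M$, $\langle \perm(\mtheta^{\uparrow \mP_M}) \rangle$, and $\perm(\mtheta \otimes \mU_{M,M})$ monomial by monomial, $c_M(N)$ is the number of ordered $M$-tuples $(\sigma_1, \dots, \sigma_M) \in (\setS_{[n]})^M$ whose type matrix $\bigl( \#\{ k : \sigma_k(i) = j \} \bigr)_{i,j}$ equals $N$, while counting perfect matchings of the lifted/Kronecker graphs and averaging over covers gives the closed forms $b_M(N) = (M!)^{2n} \prod_{i,j} \frac{(M-N_{ij})!}{M!\, N_{ij}!}$ and $s_M(N) = \frac{(M!)^{2n}}{M^{nM} \prod_{i,j} N_{ij}!}$ (these emerge from the proof of Lemma~\ref{lem: expression of permanents w.r.t. C}). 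The one structural identity I will use repeatedly is the peeling recursion obtained by splitting off the last permutation, $c_M(N) = \sum_{\sigma :\, \mP_\sigma \le N} c_{M-1}(N - \mP_\sigma)$, where $\mP_\sigma$ is the permutation matrix of $\sigma$ and $\le$ is entrywise.

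For~\eqref{sec:1:eqn:180} I would argue by induction on $M$ using only the recursion together with the elementary identity $\prod_{i,j} (N - \mP_\sigma)_{ij}! = \prod_{i,j} N_{ij}! \big/ \prod_i N_{i\sigma(i)}$. Substituting the inductive bounds for $c_{M-1}$ turns the $\sigma$-sum into $\perm(N) = \sum_\sigma \prod_i N_{i\sigma(i)}$ times explicit factorial factors, so both directions reduce to bounding $\perm(N)$. The upper bound of~\eqref{sec:1:eqn:180} follows from $\perm(N) \le \prod_i \bigl( \sum_j N_{ij} \bigr) = M^n$, and the lower bound follows from van der Waerden's inequality $\perm(N/M) \ge n!/n^n$ (i.e.\ $\perm(N) \ge M^n\, n!/n^n$) \cite{Egorychev1981,Falikman1981}; after converting $\tfrac{(M!)^n}{\prod_{i,j} N_{ij}!} = \bigl( \tfrac{M^M}{M!} \bigr)^{n} s_M(N)$ these are exactly the two stated bounds, with the base case $M = 1$ immediate.

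Inequality~\eqref{sec:1:eqn:58} (the conjecture) is the hard part, and here the recursion alone is not enough because $b_M$ does \emph{not} satisfy it. Writing $c_M = r_M\, b_M$, the recursion gives $r_M(N)$ as a $b_{M-1}$-weighted average of the $r_{M-1}(N - \mP_\sigma)$ scaled by the single factor $t_M(N)/b_M(N)$, where $t_M(N) \defeq \sum_{\sigma :\, \mP_\sigma \le N} b_{M-1}(N - \mP_\sigma)$; consequently $\min_\sigma r_{M-1} \cdot \tfrac{t_M}{b_M} \le r_M \le \max_\sigma r_{M-1} \cdot \tfrac{t_M}{b_M}$. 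The decisive computation is to evaluate this per-step factor: plugging the closed form for $b_M$ into $t_M/b_M$ and regrouping yields $t_M(N)/b_M(N) = \perm(\matr{G}) \big/ \prod_{i,j} \bigl( 1 - \gamma(i,j) \bigr)$, where $\matr{G} \defeq \vgam \circ (\mJ - \vgam)$ is the Hadamard product with entries $\gamma(i,j) \bigl( 1 - \gamma(i,j) \bigr)$ and $\mJ$ is the all-ones matrix. I would then identify the denominator as a Bethe permanent: since the Bethe free energy for the permanent is convex \cite{Vontobel2013a}, and since $\vgam$ is a stationary point of the Bethe variational problem for the matrix $\matr{G}$, it is the global optimizer, and the optimal value collapses to $\prod_{i,j} \bigl( 1 - \gamma(i,j) \bigr) = \permb(\matr{G})$. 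Hence the per-step factor equals $\perm(\matr{G})/\permb(\matr{G})$, which lies in $[1, 2^{n/2}]$ by the scalar bounds~\eqref{eq:ratio:permanent:bethe:permanent:1} \cite{Gurvits2011,Schrijver1998,Anari2019}; inducting over the $M-1$ peeling steps (base case $r_1 = 1$) then gives $1 \le r_M \le (2^{n/2})^{M-1}$, as claimed.

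I expect the main obstacle to be precisely this identification $t_M(N)/b_M(N) = \perm(\matr{G})/\permb(\matr{G})$: recognizing the per-layer ratio as a \emph{bona fide} permanent-to-Bethe-permanent ratio of the auxiliary matrix $\matr{G}$ is what allows the sharp scalar factor $2^{n/2}$ to be spent once per cover degree and accumulate to the exponent $M-1$. Two boundary phenomena need separate care. Entries with $\gamma(i,j) = 0$ simply drop out and are handled by restricting to $\supp(\vgam)$. Entries with $\gamma(i,j) = 1$ break both the regrouping (a $0/0$ in the denominator) and the differentiability of the Bethe objective; such an entry pins row $i$ to column $j$ in every admissible configuration, so I would quotient out the pinned coordinates and reduce to an $(n-1) \times (n-1)$ instance, which only sharpens the bound since $2^{(n-1)/2} \le 2^{n/2}$. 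Finally, I would sanity-check tightness: for $n = 2$ and $\vgam = \tfrac12 \mJ$ one gets $\matr{G} = \tfrac14 \mJ$ with $\perm(\matr{G})/\permb(\matr{G}) = 2 = 2^{n/2}$, matching the extremal case of~\eqref{sec:1:eqn:58}.
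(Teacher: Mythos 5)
Your proposal is correct and follows essentially the same route as the paper: the peeling recursion (the paper's Lemma~\ref{sec:1:lem:29:copy:2}), closed-form recursions for the Bethe and scaled-Sinkhorn coefficients whose per-step factors are $\perm(\hvgamRCp)$ and $\chi(M)^n\perm(\vgam)$ (Lemmas~\ref{sec:1:lem:15:copy:2} and~\ref{sec:1:lem:30}), the identification of the Bethe-side factor as a permanent-to-Bethe-permanent ratio bounded via~\eqref{eq:ratio:permanent:bethe:permanent:1}, van der Waerden plus the row-sum bound for the Sinkhorn side, and induction on $M$. Your unnormalized matrix $\matr{G} = \vgam \circ (\mJ - \vgam)$ with denominator $\prod_{i,j}(1-\gamma(i,j))$ is, by homogeneity of the Bethe permanent, exactly the paper's normalized $\hvgamRCp$ restricted to the fractional rows and columns $\setR \times \setC$, and your handling of entries $\gamma(i,j)=1$ by quotienting out pinned coordinates mirrors the paper's $\setR,\setC$ restriction.
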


Combining Lemma~\ref{lem: expression of permanents w.r.t. C} and
Theorem~\ref{thm: inequalities for the coefficients}, we can make the
following statement.


\begin{theorem}
  \label{th:main:permanent:inequalities:1}

  It holds that
  \begin{align}
    1 
      &\leq
         \frac{\perm(\mtheta)}{\permbM{M}(\mtheta)}
       \leq
         \Bigl( 2^{n/2} \Bigr)^{\!\! \frac{M-1}{M}}
           \label{sec:1:eqn:147}\, , \\
    \frac{M^{n}}{(M!)^{n/M}}
    \cdot
    \biggl( \frac{n!}{n^{n}} \biggr)^{\!\! \frac{M-1}{M}}
      &\leq
         \frac{ \perm(\mtheta) }{ \permscsM{M}(\mtheta) }
       \leq
         \frac{M^{n}}{(M!)^{n/M}}\, .
           \label{sec:1:eqn:200}
  \end{align}
  Note that in the limit $M \to \infty$ we recover the inequalities
  in~\eqref{eq:ratio:permanent:bethe:permanent:1}
  and~\eqref{eq:ratio:permanent:scaled:Sinkhorn:permanent:1}.
  \etheorem
\end{theorem}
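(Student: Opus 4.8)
The plan is to combine the three representations in Lemma~\ref{lem: expression of permanents w.r.t. C} with the coefficientwise bounds in Theorem~\ref{thm: inequalities for the coefficients}, exploiting the fact that those bounds are \emph{uniform} in $\vgam$ and that every quantity involved is non-negative. The key observation is that whenever $a(\vgam) \leq \kappa \cdot b(\vgam)$ for all $\vgam \in \GamMn$ with a constant $\kappa$ independent of $\vgam$, and the weights $\mtheta^{ M \cdot \vgam }$ are non-negative, then $\sum_{\vgam} \mtheta^{ M \cdot \vgam } a(\vgam) \leq \kappa \cdot \sum_{\vgam} \mtheta^{ M \cdot \vgam } b(\vgam)$, since the uniform constant simply factors through the weighted sum. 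After obtaining such inequalities between the $M$-th powers of the permanents, I would take $M$-th roots, using the monotonicity of $x \mapsto x^{1/M}$ on the non-negative reals.

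Concretely, for the Bethe inequalities~\eqref{sec:1:eqn:147} I would start from~\eqref{sec:1:eqn:43} and~\eqref{sec:1:eqn:190}. The left inequality $1 \leq \CM{n}(\vgam)/\CBM{n}(\vgam)$ in~\eqref{sec:1:eqn:58} gives $\CBM{n}(\vgam) \leq \CM{n}(\vgam)$ termwise, hence $\bigl( \permbM{M}(\mtheta) \bigr)^{M} \leq \bigl( \perm(\mtheta) \bigr)^{M}$; the right inequality $\CM{n}(\vgam) \leq (2^{n/2})^{M-1} \CBM{n}(\vgam)$ gives $\bigl( \perm(\mtheta) \bigr)^{M} \leq (2^{n/2})^{M-1} \bigl( \permbM{M}(\mtheta) \bigr)^{M}$. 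Taking $M$-th roots turns the constant $(2^{n/2})^{M-1}$ into $(2^{n/2})^{(M-1)/M}$ and yields~\eqref{sec:1:eqn:147}.

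The scaled-Sinkhorn inequalities~\eqref{sec:1:eqn:200} follow from~\eqref{sec:1:eqn:43}, \eqref{sec:1:eqn:168}, and~\eqref{sec:1:eqn:180} in exactly the same way. The only genuinely algebraic step is simplifying the $M$-th root of the uniform constant: from the upper bound $\CM{n}(\vgam) \leq (M^{M}/M!)^{n} \CscSgen{M}{n}(\vgam)$ one gets $\perm(\mtheta) \leq \bigl( (M^{M}/M!)^{n} \bigr)^{1/M} \permscsM{M}(\mtheta)$, and I would rewrite $\bigl( (M^{M}/M!)^{n} \bigr)^{1/M} = M^{n}/(M!)^{n/M}$; the matching lower bound carries the extra factor $(n!/n^{n})^{M-1}$, whose $M$-th root is $(n!/n^{n})^{(M-1)/M}$.

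Finally, for the ``$M \to \infty$'' remark I would rearrange~\eqref{sec:1:eqn:147} as $\perm(\mtheta)/(2^{n/2})^{(M-1)/M} \leq \permbM{M}(\mtheta) \leq \perm(\mtheta)$ and take $\limsup_{M \to \infty}$, using~\eqref{eq:bethe:perm:as:limit:1} together with $(2^{n/2})^{(M-1)/M} \to 2^{n/2}$ to recover~\eqref{eq:ratio:permanent:bethe:permanent:1}; analogously for the scaled Sinkhorn case via~\eqref{eq:scaled:Sinkhorn:permanent:as:limit:1}, where I would invoke the Stirling-type limit $M/(M!)^{1/M} \to \e$ so that $M^{n}/(M!)^{n/M} \to \e^{n}$ and $(n!/n^{n})^{(M-1)/M} \to n!/n^{n}$, matching~\eqref{eq:ratio:permanent:scaled:Sinkhorn:permanent:1}. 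Since the substantive content has already been absorbed into the two cited results, I do not expect a real obstacle here; the most error-prone points are keeping the exponents straight when taking $M$-th roots and correctly handling the $\limsup$ (rather than $\lim$) supplied by the two combinatorial characterizations.
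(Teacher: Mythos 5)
Your proposal is correct and is essentially the paper's own proof: both start from the representations in Lemma~\ref{lem: expression of permanents w.r.t. C}, apply the uniform coefficient bounds of Theorem~\ref{thm: inequalities for the coefficients} termwise under the non-negative weights $\mtheta^{M \cdot \vgam}$, and take $M$-th roots, with the same simplification $\bigl( (M^{M}/M!)^{n} \bigr)^{1/M} = M^{n}/(M!)^{n/M}$. Your treatment of the $M \to \infty$ remark via the $\limsup$ characterizations is also sound and slightly more explicit than the paper, which states that limit claim without proof.
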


Finally, we discuss the following asymptotic characterization of the
coefficients $\CM{n}( \vgam )$, $\CBM{n}( \vgam )$, and
$\CscSgen{M}{n}( \vgam )$.


\begin{proposition}
  \label{prop:coefficient:asymptotitic:characterization:1}
  
  Let $\vgam \in \GamMn$. It holds that
  \begin{align}
    \CM{n}( \vgam )
      &= \exp\bigl( M \cdot \HG'(\vgam) + o(M) \bigr)\, ,
           \label{sec:1:eqn:125} \\
    \CBM{n}( \vgam )
      &= \exp\bigl( M \cdot \HBthe(\vgam) + o(M) \bigr)\, ,
           \label{sec:1:eqn:206}  \\
    \CscSgen{M}{n}( \vgam )
      &= \exp\bigl( M \cdot \HscSthe(\vgam) + o(M) \bigr)\, ,
           \label{sec:1:eqn:207}
  \end{align}
  where $\HG'(\vgam)$, $\HBthe(\vgam)$, and $\HscSthe(\vgam)$ are the
  modified Gibbs entropy function, the Bethe entropy function, and the
  scaled Sinkhorn entropy function, respectively, as defined in Section~\ref{sec:free:energy:functions:1}.

  \vspace{-0.02 cm}
  \eproposition
\end{proposition}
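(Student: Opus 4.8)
The plan is to treat the three coefficients separately. Although Theorem~\ref{thm: inequalities for the coefficients} relates them, those inequalities are exponentially loose in $M$ (the upper and lower bounds on each ratio differ by a factor $\exp(\Theta(M))$), so they cannot transfer an estimate that is precise up to an additive $o(M)$ term in the exponent; each coefficient therefore needs its own analysis. In every case the scheme is the same: (i) recall from the proof of Lemma~\ref{lem: expression of permanents w.r.t. C} the explicit combinatorial expression for the coefficient, (ii) extract its exponential growth rate, and (iii) identify that rate with the corresponding entropy function of Section~\ref{sec:free:energy:functions:1}. Throughout, $\vgam$ is rational and $M$ ranges over those positive integers for which $\vgam \in \GamMn$, and I repeatedly use that $|\GamMn| \leq (M+1)^{n^2}$ is polynomial in $M$, so that passing between a sum over $\GamMn$ and its dominant term costs only $o(M)$ in the exponent.

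For the two tractable cases I would start from closed forms. Counting the perfect matchings of the blown-up bipartite graph underlying $\perm(\mtheta \otimes \mU_{M,M})$ in~\eqref{eq:def:degree:M:scaled:Sinkhorn:permanent:1} by their block-degree matrix $M \cdot \vgam$ gives
\begin{align*}
  \CscSgen{M}{n}(\vgam)
    &= \frac{(M!)^{n}}{M^{nM}}
       \prod_{i \in [n]} \binom{M}{M\gamma(i,1), \ldots, M\gamma(i,n)},
\end{align*}
while the same count, now averaged over the permutation wirings that define the degree-$M$ covers, yields for the Bethe coefficient the ratio of multinomials
\begin{align*}
  \CBM{n}(\vgam)
    &= \frac{\prod_{i \in [n]} \binom{M}{M\gamma(i,1), \ldots, M\gamma(i,n)}
             \cdot
             \prod_{j \in [n]} \binom{M}{M\gamma(1,j), \ldots, M\gamma(n,j)}}
            {\prod_{i,j \in [n]} \binom{M}{M\gamma(i,j)}}.
\end{align*}
Applying Stirling's formula to each factorial (finitely many, with total error $O(\log M) = o(M)$) then reads off the rates directly: $\HscSthe(\vgam) = -n - \sum_{i,j} \gamma(i,j)\log\gamma(i,j)$ and $\HBthe(\vgam) = \sum_{i,j}\bigl[ -\gamma(i,j)\log\gamma(i,j) + (1-\gamma(i,j))\log(1-\gamma(i,j)) \bigr]$, matching the definitions in Section~\ref{sec:free:energy:functions:1}.

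The Sinkhorn and Bethe cases were tractable precisely because their constraints decouple (across rows, respectively across rows and columns), leaving products of multinomials. This is exactly what fails for $\CM{n}(\vgam)$, which counts the ordered $M$-tuples $(\sigma_{1}, \ldots, \sigma_{M})$ of permutations in $\setS_{[n]}$ whose permutation matrices $\mP_{\sigma_{k}}$ average to $\vgam$, \ie\ $\tfrac{1}{M}\sum_{k} \mP_{\sigma_{k}} = \vgam$; requiring each $\sigma_{k}$ to be a single permutation couples all rows and columns simultaneously, so no product formula is available. This is the main obstacle. I would extract the rate by reading $\CM{n}(\vgam)$ as the coefficient of $\prod_{i,j} x_{i,j}^{M\gamma(i,j)}$ in $\bigl(\perm(\matr{X})\bigr)^{M}$ and applying a multivariate saddle-point (Laplace) analysis on the positive orthant, equivalently by invoking Cramér's theorem for the empirical mean of i.i.d.\ uniformly drawn permutation matrices. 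Either route gives
\begin{align*}
  \HG'(\vgam)
    &= \inf_{\matr{T} \in \sR^{n \times n}}
         \Bigl[ \log \perm\bigl( \exp(\matr{T}) \bigr)
                - \textstyle\sum_{i,j} \gamma(i,j)\, t(i,j) \Bigr],
\end{align*}
the Legendre-type (modified Gibbs) expression of Section~\ref{sec:free:energy:functions:1}, where $\exp(\matr{T})$ is entrywise exponentiation.

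The delicate work is confined to the Gibbs case: justifying the saddle point to $o(M)$ precision requires showing that the infimum is finite and attained in the relative interior of the Birkhoff polytope when $\vgam$ lies there, controlling the polynomial prefactor, and separately handling boundary points at which some $\gamma(i,j)=0$ (restricting to the face supported on $\supp(\vgam) \subseteq \suppmtheta$). A clean way to at least secure existence of the limit is a Fekete/supermultiplicativity argument: concatenating an $M_{1}$-tuple and an $M_{2}$-tuple with average $\vgam$ produces an $(M_{1}+M_{2})$-tuple with the same average, so $\CM{n}(\vgam)$ is supermultiplicative in $M$ and $\tfrac{1}{M}\log\CM{n}(\vgam)$ converges; the saddle-point computation then identifies the limit with $\HG'(\vgam)$. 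Assembling the three rates, and once more using that $|\GamMn|$ is polynomial in $M$, completes the argument and, through Laplace's method applied to Lemma~\ref{lem: expression of permanents w.r.t. C}, is consistent with the free-energy characterizations.
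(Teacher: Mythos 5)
Your treatment of the Bethe and scaled Sinkhorn coefficients is correct and is, in substance, the paper's own: in the paper, $\CBM{n}(\vgam)$ and $\CscSgen{M}{n}(\vgam)$ are \emph{defined} by the factorial formulas in Definition~\ref{sec:1:lem:43} (your two multinomial expressions are algebraically identical to~\eqref{sec:1:eqn:213} and~\eqref{sec:1:eqn:214}), and Stirling's formula then yields~\eqref{sec:1:eqn:206} and~\eqref{sec:1:eqn:207} exactly as you compute; the paper handles the Bethe case by citing~\cite{Vontobel2013}, which is the same computation. Your opening remark that Theorem~\ref{thm: inequalities for the coefficients} is exponentially too loose to transfer rates between coefficients is also correct.

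The genuine gap is in the Gibbs case, and it is twofold. First, the paper proves~\eqref{sec:1:eqn:125} by the method of types: it partitions the $M$-tuples counted by $\CM{n}(\vgam)$ according to their type $\bm{t}$ over the permutation matrices in $\setA(\vgam)$, uses Stirling to get $|\set{T}_{\bm{t}}| = \exp\bigl( M \cdot \HG(\bm{t}) + o(M) \bigr)$, notes that the number of types is polynomial in $M$, and uses density of types in $\PiA{\vgam}(\vgam)$; this terminates exactly at $\HG'(\vgam) = \max_{\vp \in \PiA{\vgam}(\vgam)} \HG(\vp)$, which is how Definition~\ref{sec:1:def:18:part:2} \emph{defines} the modified Gibbs entropy. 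Your route terminates instead at the Legendre-type expression $\inf_{\matr{T}} \bigl[ \log \perm\bigl( \exp(\matr{T}) \bigr) - \sum_{i,j} \gamma(i,j)\, t(i,j) \bigr]$ and asserts that this \emph{is} the modified Gibbs entropy of Section~\ref{sec:free:energy:functions:1}. It is not: $\HG'$ is defined as a constrained entropy maximum, not as a Legendre transform, and the equality of the two quantities is a nontrivial convex-duality statement (the Gibbs variational principle applied to the log-partition function $\matr{T} \mapsto \log \perm(\exp(\matr{T}))$, with extra care when $\vgam$ has zero entries, where the infimum is generally not attained and one must argue via the face supported on $\supp(\vgam)$). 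Your proposal never proves this identification, so even granting the saddle-point analysis it does not establish~\eqref{sec:1:eqn:125} in the form stated.

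Second, citing Cram\'er's theorem does not by itself control an \emph{exact-point} count: $\CM{n}(\vgam)$ equals $(n!)^{M}$ times the probability that the empirical mean of $M$ i.i.d.\ uniform permutation matrices equals the single lattice point $\vgam$, and the large-deviations lower bound applies to open sets, which a point is not. One needs a lattice-local refinement, or Sanov's theorem plus the contraction principle --- which, made elementary, is precisely the method of types, and which also lands directly on the max-entropy form, curing the first gap for free. Your Fekete/supermultiplicativity argument does establish existence of $\lim_M \frac{1}{M} \log \CM{n}(\vgam)$ along admissible $M$, but existence without identification closes neither gap.
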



Let us put the above results into some context:
\begin{itemize}

\item The first inequality in~\eqref{sec:1:eqn:58} proves the first (weaker)
  conjecture in~\cite[Conjecture 52]{Vontobel2013a}.

\item Smarandache and Haenggi~\cite{Smarandache2016} claimed a proof of the
  second (stronger) conjecture
  in~\cite[Conjecture~52]{Vontobel2013a},\footnote{The second
      (stronger) conjecture in~\cite[Conjecture~52]{Vontobel2013a} stated that
      for each $ \mP_{M} \in \tPsi_{M} $, there is a collection of
      non-negative real numbers
      $\bigl\{ C_{\mathrm{B},M,n,\mP_{M}}( \vgam ) \bigr\}_{\vgam \in \GamMn}$
      such that
      \begin{align*}
        \perm( \mtheta^{\uparrow \mP_{M}} ) = \sum_{\vgam \in \GamMn}
      \mtheta^{ M \cdot \vgam } \cdot C_{\mathrm{B},M,n,\mP_{M}}( \vgam )\, ,
      \end{align*}
      and that for every $ \vgam \in \GamMn $, it holds that
      $ \CM{n}(\vgam) / C_{\mathrm{B},M,n,\mP_{M}}( \vgam ) \geq 1.  $}
  which would imply the first (weaker) conjecture in~\cite[Conjecture
  52]{Vontobel2013a}. However, the proof in~\cite{Smarandache2016} is
  incomplete.

\item The first inequality in~\eqref{sec:1:eqn:147} proves the first (weaker)
  conjecture in~\cite[Conjecture 51]{Vontobel2013a}.

\item The proof of the first inequality in~\eqref{sec:1:eqn:58}, and with that
  the proof of the first inequality in~\eqref{sec:1:eqn:147}, uses an
  inequality of 
  Schrijver~\cite{Schrijver1998}. By now, there are various
  proofs of Schrijver's inequality and also various proofs of the first
  inequality in~\eqref{eq:ratio:permanent:bethe:permanent:1} available in the
  literature, however, the ones that we are aware of are either quite involved
  (like Schrijver's original proof of his inequality) or use reasonably
  sophisticated machinery like real stable polynomials. Given the established
  veracity of the first inequality in~\eqref{sec:1:eqn:147}, it is hoped that
  a more basic proof for this inequality, and with that of the first
  inequality in~\eqref{eq:ratio:permanent:bethe:permanent:1}, can be
  found. Similar statements can be made for the other inequalities
  in~\eqref{sec:1:eqn:147} and~\eqref{sec:1:eqn:200}.

  In fact, for $M = 2$, a basic proof of the inequalities
  in~\eqref{sec:1:eqn:147} is available: it is based
  on~\cite[Proposition~2]{KitShing2022} (which appears in this paper as
  Proposition~\ref{prop:ratio:perm:permbM:2:1}), along with the observation
  that the quantity $c(\sigma_1,\sigma_2)$ appearing therein is bounded as
  $0 \leq c(\sigma_1,\sigma_2) \leq n/2$. Another basic proof of the first
  inequality in~\eqref{sec:1:eqn:147} is based on a straightforward
  generalization of the result in~\cite[Lemma 4.2]{Csikvari2017} from
  $\{0,1\}$-valued matrices to arbitrary non-negative matrices and then
  averaging over $2$-covers.

  Moreover, for the case where $\mtheta$ is proportional to the all-one matrix
  and where $M \in \sZpp$ is arbitrary, a basic proof of the first inequality
  in~\eqref{sec:1:eqn:147} was given in~\cite[Appendix~I]{Vontobel2013a}.
  However, presently it is not clear how to generalize this proof to other
  matrices $\mtheta$.

\item For $M = 2$, the lower and upper bounds in~\eqref{sec:1:eqn:147} are
  exactly the square root of the lower and upper bounds
  in~\eqref{eq:ratio:permanent:bethe:permanent:1}, respectively, thereby
  corroborating some of the statements made after~\eqref{eq:perm:ratios:1}.

\item Although we take advantage of special properties of the factor graphs
  under consideration, potentially the techniques in this paper can be
  extended toward a better understanding of the Bethe approximation to the
  partition function of other factor graphs. (For more on this
    topic, see the discussion in Section~\ref{sec:7}.)

\end{itemize}


The rest of this paper is structured as
follows. Section~\ref{sec:basic:definition:and:notations:1} introduces some
basic definitions and notations. Afterwards, Section~\ref{sec:1} gives a
factor graph representation of the permanent and
Section~\ref{sec:free:energy:functions:1} defines some free energy functions
associated with this factor graph. The main results, \ie, Lemma~\ref{lem:
  expression of permanents w.r.t. C}, Theorem~\ref{thm: inequalities for the
  coefficients}, Theorem~\ref{th:main:permanent:inequalities:1}, and
Proposition~\ref{prop:coefficient:asymptotitic:characterization:1}, are then
proven in
Sections~\ref{sec:recursions:coefficients:1}--\ref{sec:asymptotic:1}.
Section~\ref{apx:22} has a closer look at the special case $M = 2$ and
connects some of the results in this paper with results in other
papers. Finally, Section~\ref{sec:7} presents some conclusions
and open problems. Various (longer) proofs have been
collected in the appendices.


\ifx\withclearpagecommands\x
\clearpage
\fi

\section{Basic Definitions and Notations}
\label{sec:basic:definition:and:notations:1}


Let $n$ be a positive integer and let $\mtheta \in \sR_{\geq 0}^{n \times
  n}$. We define $\suppmtheta$ to be the support of $\mtheta$, \ie,
\begin{align*}
  \suppmtheta 
    &\defeq
       \bigl\{ 
         (i,j) \in [n] \times [n] 
       \bigm|
         \theta(i,j) > 0
       \bigr\}\, .
\end{align*}


Consider two finite subsets $ \set{I} $ and $ \set{J} $ of $[n]$ such that
$ |\set{I}| = | \set{J} | $. The set $ \setS_{\set{I} \to \set{J}} $ is
defined to be the set of all bijections from $ \set{I} $ to $ \set{J} $. In
particular, if $ \set{I} = \set{J} $, then the set
$ \setS_{\set{I} \to \set{J}} $ represents the set of all permutations of the
elements in $ \set{I} $, and we simply write $ \setS_{\set{I}} $ instead of
$\setS_{\set{I} \to \set{J}}$. Given
$ \sigma \in \setS_{\set{I} \to \set{J}} $, we define the matrix
$ \mP_{\sigma} \in \sZp^{\set{I} \times \set{J}} $ to be
\begin{align*}
  \mP_{\sigma} 
    &\defeq 
       \bigl(
         P_{\sigma}( i, j )
       \bigr)_{\! (i,j) \in \set{I} \times \set{J}} \, , 
       \ \ \ 
  P_{\sigma}( i, j )
     \defeq 
       \begin{cases}
         1 & \sigma(i) = j \\
         0 & \text{otherwise}
     \end{cases}\, .
     \nonumber
\end{align*}
Note that $\mP_{\sigma}$ is a permutation matrix, \ie, a matrix where all
entries are equal to $0$ except for exactly one $1$ per row and exactly one
$1$ per column. Moreover, given a set $ \setS_{\set{I} \to \set{J}} $, the set
$\setP_{\set{I} \to \set{J}}$ is defined to be the set of the associated
permutation matrices, \ie,
\begin{align}
  \setP_{\set{I} \to \set{J}}
    &\defeq
       \bigl\{ 
         \mP_{\sigma} 
       \bigm|
         \sigma \in \setS_{\set{I} \to \set{J}}
       \bigr\}\, . 
         \label{sec:1:eqn:181}
\end{align}
If $\set{I} = \set{J}$, we simply write $\setP_{\set{I}}$ instead of
$\setP_{\set{I} \to \set{J}}$. A particularly important special case for this
paper is the choice $\set{I} = \set{J} = [n]$: the set $\setP_{[n]}$
represents the set of all permutation matrices of size $n \times n$ where the
rows and columns are indexed by $[n]$.


We define $\Gamma_n$ to be the set of all doubly stochastic
matrices of size $n \times n$, \ie,
\begin{align*}
  \Gamma_n
    &\defeq
       \left\{ 
         \vgam
         = \bigl(
             \gamma(i,j)
           \bigr)_{\! i,j \in [n]}
      \ \middle| \ 
        \begin{array}{l}
          \gamma(i,j) \in \sR_{\geq 0}, \, \forall i,j \in [n] \\
          \sum\limits_{j \in [n]} \gamma(i,j) = 1, \, \forall i \in [n] \\
          \sum\limits_{i \in [n]} \gamma(i,j) = 1, \, \forall j \in [n]
        \end{array}
      \right\}\, .
\end{align*}
Moreover, for $M \in \sZpp$, we define $\Gamma_{M,n}$ to be the subset of
$\Gamma_{n}$ that contains all matrices where the entries are multiples of
$1/M$, \ie,
\begin{align*}
  \Gamma_{M,n} 
    &\defeq 
       \bigl\{ 
         \vgam \in \Gamma_n
       \bigm|
         M \cdot \gamma(i,j) \in \sZp, \ \forall i,j \in [n]
       \bigr\}\, .
\end{align*}
Observe that $\Gamma_{1,n} = \setP_{[n]}$.


All logarithms in this paper are natural logarithms and the value of
$0 \cdot \log(0)$ is defined to be $ 0 $.


In this paper, given an arbitrary $ n \in \sZpp $, if there is no ambiguity,
we use $ \prod_{i,j} $, $ \prod_{i} $, $ \prod_{j} $, $ \sum_{i,j} $,
$ \sum_{i} $, and $ \sum_{j} $ for $ \prod_{i,j \in [n]} $, $ \prod_{i \in [n]} $,
$ \prod_{j \in [n]} $, $ \sum_{i,j \in [n]} $, $ \sum_{i \in [n]} $, and
$ \sum_{j \in [n]} $, respectively.


\ifx\withclearpagecommands\x
\clearpage
\fi

\section{An S-NFG Representation of the Permanent}
\label{sec:1}


Factor graphs are powerful graphical tools to depict the factorization of
multivariate functions. Many well-known approximation methods, \eg, the
Bethe~\cite{Yedidia2005} and Sinkhorn\cite{Linial2000} approximations, are
formulated based on factor graphs. In this paper, we consider standard normal
factor graphs (S-NFGs), \ie, factor graphs where the local functions are
non-negative real-valued, and variables are associated with
edges~\cite{Forney2001, Loeliger2004a}.


The main purpose of this section is to present an S-NFG whose partition
function equals the permanent of a non-negative matrix~$\mtheta$ of size
$n \times n$. There are different ways to develop such
S-NFGs~\cite{Greenhill2010, Vontobel2013a}. (For a comparison between the ways
in~\cite{Greenhill2010, Vontobel2013a}, see\cite[Section
VII-E]{Vontobel2013a}.) In this paper, we consider the S-NFG $ \sfN(\mtheta) $
defined in~\cite{Vontobel2013a}. Throughout this section, let $\mtheta$ be a
fixed non-negative matrix of size $n \times n$.


\begin{definition}
  \label{sec:1:def:5}
  \label{def: details of S-NFG of permanent}

  We define the S-NFG
  $ \sfN(\mtheta) \defeq \sfN( \setF, \setEfull, \set{X}) $ as follows (see
  also Fig.~\ref{sec:1:fig:15} for the special case $n = 3$).
  \begin{enumerate}

  \item The set of vertices (also called function nodes) is
    \begin{align*}
      \set{F}
        &\defeq
           \{ \fr{i} \}_{i \in [n]} 
           \ \cup \ 
           \{ \fc{j} \}_{j \in [n]}\, .
    \end{align*}
    Here, the letter ``$\mathrm{r}$'' in $ \fr{i} $ means that $ \fr{i} $
    corresponds to the $ i $-th row of $\mtheta$, whereas the letter
    ``$\mathrm{c}$'' in $ \fc{j} $ means that $ \fc{j} $ corresponds to the
    $ j $-th column of $\mtheta$.

  \item The set of edges is defined to be
    \begin{align*}
      \setEfull
        &\defeq
           [n] \times [n] 
          = \bigl\{ 
              (i,j) 
            \bigm|
              i , j \in [n] 
            \bigr\}\, .
    \end{align*}

  \item The alphabet associated with edge $ e = (i,j) \in \setEfull $ is
    $ \setx{e} = \setx{i,j} \defeq \{0,1\} $.  

  \item The set $ \set{X} $ is defined to be
    \begin{align*}
      \set{X}
        &\defeq
           \prod_{e}
             \setx{e} 
         = \prod_{i,j}
             \setx{i,j}
    \end{align*}
    and is called the configuration set of $ \sfN(\mtheta) $.

  \item A matrix like
    \begin{align*}
      \vgamzo
        &\defeq
           \bigl( 
             \gamzo(i,j)
           \bigr)_{\! (i,j) \in [n] \times [n]}
             \in \setX
    \end{align*} 
    is used to denote an element of $ \set{X} $, \ie, a configuration of
    $ \sfN(\mtheta) $. Here, $ \gamma(i,j) $ is associated with edge
    $ e = (i,j) \in \setEfull$. In the following, the $i$-th row and the
    $j$-th column of $\vgamzo$ will be denoted by $ \vgamzo(i,:) $ and
    $ \vgamzo(:,j) $, respectively.

  \item For a given configuration $\vgamzo \in \setX$, the value of the local
    functions of $\sfN(\mtheta)$ are defined as follows. Namely, for every
    $ i \in [n] $, the local function $ \fr{i} $ is defined to
    be\footnote{Note that, with a slight abuse of notation, we use the same
      label for a function node and its associated local function.}
    \begin{align*}
      \fr{i}\bigl( \vgamzo(i,:) \bigr)
        &\defeq
           \begin{cases}
             \sqrt{ \theta(i,j) } & \vgamzo(i,:) = \vuj  \\
             0                    & \text{otherwise}
          \end{cases}\, .
    \end{align*}
    For every $ j \in [n] $, the local function $ \fc{j} $ is defined to be
    \begin{align*}
        \fc{j}\bigl( \vgamzo(:,j) \bigr) \defeq
        \begin{cases}
            \sqrt{ \theta(i,j) } & \vgamzo(:,j) = \vui \\
            0 & \text{otherwise}
        \end{cases}\, .
    \end{align*}
    Here we used the following notation: the vector $\vuj$ stands for a row
    vector of length $n$ whose entries are all equal to~$0$ except for the
    $j$-th entry, which is equal to~$1$. The column vector $\vui$ is defined
    similarly.
  
  \item For a given configuration $\vgamzo \in \setX$, the value of the global
    function $g$ of $\sfN(\mtheta)$ is defined to be the product of the values
    of the local functions, \ie,
    \begin{align*}
      g(\vgamzo)
        &\defeq
           \Biggl( 
             \prod_{i} 
               \fr{i}\bigl( \vgamzo(i,:) \bigr)
           \Biggr)
          \cdot
          \Biggl(
            \prod_{j}
              \fc{j}\bigl( \vgamzo(:,j) \bigr) 
          \Biggr) \, .
    \end{align*}
  
  \item The set of valid configurations of $ \sfN(\mtheta) $ is
    defined to be
    \begin{align*}
      \setA(\mtheta)
        &\defeq
           \setA\bigl( \sfN(\mtheta) \bigr)
         \defeq 
           \bigl\{
             \vgamzo \in \setX
           \bigm|
             g(\vgamzo) > 0
           \bigr\}\, .
    \end{align*}
    
  \item The partition function of $ \sfN(\mtheta) $ is defined to be
    \begin{align*}
      Z\bigl( \sfN(\mtheta) \bigr)
        &\defeq 
           \sum_{\vgamzo \in \setX }
             g(\vgamzo)
         = \sum_{\vgamzo \in \setA(\mtheta) }
             g(\vgamzo)\, .
    \end{align*}

  \end{enumerate}
  \vspace{-0.25cm}
  \edefinition
\end{definition}


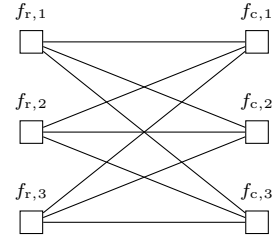
\begin{figure}[t]
  \centering
  \begin{tikzpicture}[node distance=2.2cm, on grid,auto,scale=1.50]
    \tikzstyle{state}=[shape=rectangle,fill=white,draw,minimum size=0.3cm]
    \pgfmathsetmacro{\ldis}{2} 
    \pgfmathsetmacro{\sdis}{0.5} 
    \pgfmathsetmacro{\scale}{0.8} 
    \begin{pgfonlayer}{glass}
        \foreach \i in {1,2,3}{
            \node[state] (f\i) at (0,-\i*\scale) [label=above: \scriptsize$\fr{\i}$] {};
            \node[state] (g\i) at (\ldis,-\i*\scale) [label=above: \scriptsize$\fc{\i}$] {};
        }
        \foreach \i in {1,2,3}{
            \foreach \j in {1,2,3}{
                \begin{pgfonlayer}{background}
                     \draw[]
                        (f\i) -- (g\j) ;
                \end{pgfonlayer}
            }
        }
    \end{pgfonlayer}
\end{tikzpicture}
\vspace{0.3cm}
  \caption{The S-NFG $ \sfN(\mtheta) $ for the special case $ n = 3 $.}
  \label{sec:1:fig:15}
  \vspace{-0.0cm}
\end{figure}


We make the following observations:
\begin{enumerate}

\item One can verify that $Z\bigl( \sfN(\mtheta) \bigr) = \perm(\mtheta)$.

\item If $ g(\vgamzo) > 0 $, then $ \vgamzo \in \setP_{[n]} $,
  \ie, $\vgamzo$ is a permutation matrix of size $n \times n$. (In
  fact, if all entries of $\mtheta$ are strictly positive, then
  $ g(\vgamzo) > 0 $ if and only if $ \vgamzo \in \setP_{[n]} $.)

\end{enumerate}


For the following definition, recall that $\Gamma_{n}$ is the set of all
doubly stochastic matrices of size $n \times n$ and that $\Gamma_{M,n}$ is the
set of all doubly stochastic matrices of size $n \times n$ where all entries
are multiples of $1/M$.


\begin{definition}
  \label{sec:1:def:10}

  Consider the S-NFG $ \sfN(\mtheta) $. Let $M \in \sZpp $.  We make the
  following definitions.
  \begin{enumerate}

  \item We define $\Gamnthe$ to be the set of matrices in $\Gamma_{n}$ whose
    support is contained in the support of $\mtheta$, \ie,
    \begin{align*}
      \Gamnthe
        &\defeq 
           \bigl\{ 
             \vgam \in \Gamma_{n}
           \bigm|
             \gamma(i,j) = 0 \text{ if } \theta(i,j) = 0
           \bigr\}\, .
    \end{align*}

  \item We define $\GamMnthe$ to be the set of matrices in $\Gamma_{M,n}$
    whose support is contained in the support of $\mtheta$, \ie,
     \begin{align*}
       \GamMnthe
         &\defeq 
            \bigl\{ 
              \vgam \in \Gamma_{M,n}
            \bigm|
              \gamma(i,j) = 0 \text{ if } \theta(i,j) = 0
            \bigr\}\, .
     \end{align*}

  \item We define $\PiA{\mtheta}$ to be the set of vectors representing
    probability mass functions over $\setP_{[n]}$ whose support is contained
    in $\setA(\mtheta)$, \ie,
    \begin{align*}
    &\!\!
    \PiA{\mtheta} \\
      &\!\!\defeq
         \left\{ 
           \vp = \bigl( p(\mP_{\sigma}) \bigr)_{\! \mP_{\sigma} \in \setP_{[n]}}
        \ \middle| \!
          \begin{array}{c}
            p(\mP_{\sigma}) \geq 0, \, 
              \forall \mP_{\sigma} \in \setP_{[n]} \\
            p(\mP_{\sigma}) = 0, \, 
              \forall \mP_{\sigma} 
                        \notin \set{A}(\mtheta) \\
            \sum\limits_{\mP_{\sigma} \in \setP_{[n]}} p(\mP_{\sigma}) = 1
          \end{array} \!
        \right\}\, .
    \end{align*}

  \item Let $\vgam \in \Gamma_n$. We define
    \begin{align*}
      \PiA{\mtheta}( \vgam )
        &\defeq 
          \left\{ 
              \vp 
              \in \PiA{\mtheta}
          \ \middle| \ 
              \sum_{\mP_{\sigma} \in \setP_{[n]}}
                p(\mP_{\sigma}) \cdot \mP_{\sigma} = \vgam
          \right\}\, .
    \end{align*}

   \item For any matrix $ \matr{B} \in \sR^{n \times n}_{\geq 0} $, we define
   \begin{align*}
       \setS_{[n]}(\matr{B})
         &\defeq 
            \bigl\{ 
              \sigma \in \setS_{[n]} 
            \bigm|
              B\bigl( i, \sigma(i) \bigr) > 0, \, \forall i \in [n]
            \bigr\} \\
         &= \left\{ 
              \sigma \in \setS_{[n]} 
            \ \middle| \ 
              \prod_i 
                B\bigl( i, \sigma(i) \bigr) > 0
            \right\}\, .
   \end{align*}
   
   \item We define 
     \begin{align*}
       \vsigma_{[M]}
         &\defeq
            (\sigma_{m})_{m \in [M]} \in (\setS_{[n]})^{M}\, , \\
       \mP_{\vsigma_{[M]}}
         &\defeq
            ( \mP_{\sigma_{m}} )_{ m \in [M] } \in (\setP_{[n]})^{M}\, .
     \end{align*}

   \item Let $ \vgam \in \Gamma_{M,n} $. We define
     \begin{align*}
       \mtheta^{ M \cdot \vgam }
         &\defeq
            \prod_{i,j}
              \bigl(
                \theta(i,j)
              \bigr)^{\! M \cdot \gamma(i,j)}\, .
     \end{align*}

   \item Let $\pmtheta$ be the probability mass function on $\setS_{[n]}$
     induced by~$\mtheta$, \ie,
     \begin{align*}
       \pmtheta(\sigma)
         &\defeq
            \frac{ \prod\limits_{i \in [n]} \theta\bigl( i,\sigma(i) \bigr)}
                 { \perm(\mtheta) }
          = \frac{ \mtheta^{\mP_{\sigma}}}
                 { \perm(\mtheta) }\, , 
                   \quad \sigma \in \setS_{[n]}\, .
     \end{align*}
     Clearly, $\pmtheta(\sigma) \geq 0$ for all $\sigma \in \setS_{[n]}$ and
     $\sum\limits_{\sigma \in \setS_{[n]}} \pmtheta(\sigma) = 1$.

   \end{enumerate}
  \edefinition
\end{definition}

Note that if $\matr{B}$ is a doubly stochastic matrix of size $n \times n$, then
  the set $\setS_{[n]}(\matr{B})$ is non-empty. Indeed, this follows from
  considering
   \begin{align*}
      \perm( \matr{B} ) 
      = \sum_{\sigma \in \setS_{[n]}(\matr{B})} 
      \prod_{i}
      B\bigl( i, \sigma(i) \bigr) 
      \overset{(a)}{\geq} n!/n^{n} > 0\, ,
   \end{align*}
   where step $(a)$ follows from van der Waerden's inequality, which was
   proven in~\cite[Theorem 1]{Egorychev1981} and~\cite[Theorem
   1]{Falikman1981}.

\begin{lemma}
  \label{sec:1:lemma:8}

  The set of valid configurations $ \setA(\mtheta) $ of $\sfN(\mtheta)$
  satisfies
  \begin{align*}
    \setA(\mtheta) = 
      \left\{ 
         \left. 
         \mP_{\sigma} \in \setP_{[n]}
      \ \right| \ 
          \sigma \in \setS_{[n]}(\vtheta)
      \right\}\, , 
  \end{align*}
  which implies a bijection between $ \setA(\mtheta) $ and
  $ \setS_{[n]}(\vtheta) $.
\end{lemma}


\begin{proof}
  This result follows from the Observation~2 after
  Definition~\ref{sec:1:def:5}.
\end{proof}


\ifx\withclearpagecommands\x
\clearpage
\fi

\section{Free Energy Functions Associated with 
               \texorpdfstring{$\sfN(\mtheta)$}{}}
\label{sec:free:energy:functions:1}


In this section, we introduce various free energy functions associated with
$\sfN(\mtheta)$. First we introduce the Gibbs free energy function, whose
minimum yields $\perm(\mtheta)$. Afterwards, we introduce the Bethe free
energy function and the scaled Sinkhorn free energy function, which are
approximations of the Gibbs free energy functions. Their minima yield the
Bethe permanent $\permb(\mtheta)$ and the scaled Sinkhorn permanent
$\permscs(\mtheta)$, respectively, which are approximations of
$\perm(\mtheta)$. Finally, we introduce the degree-$M$ Bethe permanent
$\permbM{M}(\mtheta)$ and the degree-$M$ scaled Sinkhorn permanent
$\permscsM{M}(\mtheta)$.


\begin{definition}
  \label{sec:1:def:18}

  Consider the S-NFG $ \sfN(\mtheta) $. The Gibbs free energy function
  associated with $ \sfN(\mtheta) $ (see~\cite[Section III]{Yedidia2005}) is
  defined to be
  \begin{align*}
    \FGthe: \ \PiA{\mtheta} &\to \sR \\
              \vp           &\mapsto \UGthe(\vp) - \HG(\vp)
  \end{align*}
  with
  \begin{align*}
    \UGthe(\vp)   
      &\defeq 
         - 
         \sum_{\mP_{\sigma} \in \setP_{[n]}}
           p( \mP_{\sigma} ) \cdot \log \bigl( g( \mP_{\sigma} ) \bigr)\, , \\
    \HG(\vp)   
      &\defeq 
         - 
         \sum_{\mP_{\sigma} \in \setP_{[n]}} 
         p( \mP_{\sigma} ) \cdot \log \bigl( p( \mP_{\sigma} ) \bigr)\, ,
  \end{align*}
  where $ \UGthe $ and $ \HG $ are called the Gibbs average
  energy function and the Gibbs entropy function, respectively.\footnote{Note
    that $\set{P}_{[n]} \subseteq \set{X}$, where the configuration set
    $\set{X}$ of $\sfN(\mtheta)$ was defined in Definition~\ref{def: details
      of S-NFG of permanent}. With this, $\mP_{\sigma} \in \setP_{[n]}$ is a
    configuration of $\sfN(\mtheta)$ and $g(\mP_{\sigma})$ is well defined.}
  \edefinition
\end{definition}


One can verify that
\begin{align*}
  Z\bigl( \sfN(\mtheta) \bigr) 
    &= \exp
         \biggl( 
           - \min_{\vp \in \PiA{\mtheta}} \FGthe(\vp)
         \biggr)\, .
\end{align*}


Recall the Birkhoff--von Neumann theorem (see, \eg,~\cite[Theorem
4.3.49]{Horn2012}), which states that the convex hull of all permutation
matrices of size $n \times n$ equals the set of all doubly stochastic matrices
of size $ n \times n $. Because $\vp \in \PiA{\mtheta}$ is a probability mass
function over the set of all permutation matrices of size $n \times n$ with support contained in the support of $ \mtheta $, the
Gibbs free energy function can be reformulated as follows as a function over
$\Gamma_{n}(\mtheta)$ instead of over~$\PiA{\mtheta}$.



\begin{definition}
  \label{sec:1:def:18:part:2}

  Consider the S-NFG $ \sfN(\mtheta) $. The modified Gibbs free energy
  function associated with $ \sfN(\mtheta) $ is defined to be
  \begin{align*}
    \FGthe': \ \Gamma_{n}(\mtheta) &\to \sR \\
               \vgam               &\mapsto \UGthe'(\vgam) - \HG'(\vgam)
  \end{align*}
  with
  \begin{align*}
    \UGthe'(\vgam)
      &\defeq 
         - 
         \sum_{ i,j }  
           \gamma(i,j) \cdot \log\bigl( \theta(i,j) \bigr)\, , \\ 
    \HG'(\vgam)
      &\defeq
         \max_{ \vp \in \PiA{\vgam}( \vgam ) }
           \HG(\vp)\, ,
  \end{align*}
  where $ \UGthe' $ and $ \HG' $ are called the modified Gibbs average energy
  function, and the modified Gibbs entropy function, respectively.%
  \footnote{\label{footnote:PiA:support:condition:1}%
    Note that the definition
    $\HG'(\vgam) \defeq \max_{ \vp \in \PiA{\mtheta}( \vgam ) } \HG(\vp)$
    would have been more natural/straightforward. However, we prefer the
    definition
    $\HG'(\vgam) \defeq \max_{ \vp \in \PiA{\vgam}( \vgam ) } \HG(\vp)$, as it
    does not depend on $\mtheta$ and, most importantly is equivalent to the
    former definition for $\vgam \in \Gamma_{n}(\mtheta)$. This equivalence
    stems from the following observation. Namely, let
    $\vp \in \PiA{\mtheta}( \vgam )$. Then $p(\mP_{\sigma}) > 0$ only if
    $\supp(\mP_{\sigma}) \subseteq \supp(\vgam)$.} \\
  \mbox{} \edefinition
\end{definition}



One can verify that
\begin{align*}
  Z\bigl( \sfN(\mtheta) \bigr) 
    &= \exp
     \biggl( 
       - \min_{\vgam \in \Gamnthe} \FGthe'(\vgam)
     \biggr)\, .
\end{align*}


However, neither formulation of the Gibbs free energy function leads to a
tractable optimization problem in general unless~$n$ is small. This motivates
the study of approximations of the permanent based on approximating the Gibbs
free energy function, especially based on approximating the modified Gibbs
free energy function in the reformulation presented in
Definition~\ref{sec:1:def:18:part:2}. In the following, we will first discuss
an approximation known as the Bethe free energy function and then an
approximation known as the scaled Sinkhorn free energy function.


In general, the Bethe free energy function associated with an S-NFG is a
function over the so-called local marginal polytope associated with the
S-NFG. (For a general discussion of the local marginal polytope associated
with an S-NFG, see, \eg, \cite[Section 4]{Wainwright2008}.) The paper
\cite[Section~IV]{Vontobel2013a} shows that the local marginal polytope of the
$\sfN(\mtheta)$ can be parameterized by the set of doubly stochastic
matrices. This is a consequence of the Birkhoff--von Neumann theorem.


\begin{definition}
  \label{sec:1:def:16}
    
  Consider the S-NFG $ \sfN(\mtheta) $. The Bethe free energy function
  associated with $ \sfN(\mtheta) $ (see~\cite[Corollary 15]{Vontobel2013a})
  is defined to be 
  \begin{align*}
    \FBthe: \ \Gamma_{n}(\mtheta) &\to \sR \\
              \vgam              &\mapsto \UBthe( \vgam ) - \HBthe( \vgam )
  \end{align*}
  with
  \begin{align*}
    \UBthe(\vgam)
      &\defeq
        -\sum_{ i,j }
        \gamma(i,j) \cdot \log\bigl( \theta(i,j) \bigr)\, , \\
    \HBthe( \vgam )
      &\defeq 
        -
        \sum_{ i,j }
          \gamma(i,j) \cdot \log\bigl(  \gamma(i,j) \bigr) \\
      &\quad\ 
        + 
        \sum_{ i,j }
          \bigl( 1 - \gamma(i,j) \bigr) 
            \cdot 
            \log\bigl( 1 - \gamma(i,j) \bigr)\, .
  \end{align*}
  Here, $ \UBthe $ and $ \HBthe $ are called the Bethe average energy function
  and the Bethe entropy function, respectively. With this, the Bethe permanent
  is defined to be
  \begin{align}
    \permb(\mtheta)
      &\defeq 
         \exp
           \biggl(
             - \min_{\vgam \in \Gamnthe} \FBthe( \vgam ) 
           \biggr)\, .
        \label{sec:1:eqn:192}
  \end{align}
  \edefinition
\end{definition}


The degree-$M$ Bethe permanent of $\mtheta$ was defined
in~\cite{Vontobel2013a} and yields a combinatorial characterization of the
Bethe permanent. (Note that the definition of $\permb(\mtheta)$
in~\eqref{sec:1:eqn:192} is analytical in the sense that $\permb(\mtheta)$ is
given by the solution of an optimization problem.)


\begin{definition}
  \label{def:matrix:degree:M:cover:1}

  Consider the S-NFG $ \sfN(\mtheta) $. Let $M \in \sZpp$.  The degree-$M$
  Bethe permanent is defined to be
  \begin{align*}
    \permbM{M}(\mtheta)
      &\defeq
        \sqrt[M]{
          \bigl\langle
            \perm( \mtheta^{\uparrow \mP_{M}} )
          \bigr\rangle_{\mP_{M} \in \tPsi_{M} }
       } \, , 
  \end{align*}
  where 
  \begin{align*}
    \tPsi_{M}
      &\defeq 
        \left\{ 
          \mP_{M} \defeq \Bigl( \mP^{(i,j)} \Bigr)_{\!\! i,j\in [n]}
        \ \middle| \ 
          \mP^{(i,j)} \in \setP_{[M]}
        \right\}\, ,
  \end{align*}
  where $ \setP_{[M]} $ is defined in~\eqref{sec:1:eqn:181} and the
  $ \mP_{M} $-lifting of $ \mtheta $ is defined to be a real-valued matrix of size $ M n $-by-$Mn$:
  \begin{align*}
    \!\!
    \mtheta^{\uparrow \mP_{M}}
      &\defeq 
        \begin{pmatrix}
            \theta(1,1) \cdot  \mP^{(1,1)} 
            & \cdots & \theta(1,n) \cdot  \mP^{(1,n)} \\
            \vdots & \ddots & \vdots \\
            \theta(n,1) \cdot  \mP^{(n,1)} 
            & \cdots & \theta(n,n) \cdot  \mP^{(n,n)} 
           \end{pmatrix} 
        \, .
  \end{align*}
  \edefinition
\end{definition}


\begin{proposition}
  \label{sec:1:thm:8}
    
  Consider the S-NFG $ \sfN(\mtheta) $. It holds that
  \begin{align}
    \limsup_{M \to \infty} \,
      \permbM{M}(\mtheta)
      &= \permb(\mtheta)\, . 
            \label{sec:1:eqn:193}
  \end{align}
  The limit in~\eqref{sec:1:eqn:193} is visualized in
  Fig.~\ref{fig:combinatorial:characterization:1}~(left).
\end{proposition}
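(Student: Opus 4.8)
The plan is to reduce the claim to a Laplace-type ``largest-exponent-dominates'' estimate of the sum representation of $\bigl(\permbM{M}(\mtheta)\bigr)^{M}$, and then to pass from the discrete grid $\GamMnthe$ to the continuous polytope $\Gamnthe$. First I would invoke Lemma~\ref{lem: expression of permanents w.r.t. C}, specifically~\eqref{sec:1:eqn:190}, to write
\begin{align*}
  \bigl( \permbM{M}(\mtheta) \bigr)^{M}
    &= \sum_{\vgam \in \GamMn}
         \mtheta^{M \cdot \vgam} \cdot \CBM{n}(\vgam) \\
    &= \sum_{\vgam \in \GamMnthe}
         \mtheta^{M \cdot \vgam} \cdot \CBM{n}(\vgam),
\end{align*}
where the second equality holds because $\mtheta^{M \cdot \vgam} = 0$ whenever the support of $\vgam$ is not contained in that of $\mtheta$. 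Writing $\mtheta^{M \cdot \vgam} = \exp\bigl( - M \cdot \UBthe(\vgam) \bigr)$ and plugging in the asymptotics $\CBM{n}(\vgam) = \exp\bigl( M \cdot \HBthe(\vgam) + o(M) \bigr)$ from Proposition~\ref{prop:coefficient:asymptotitic:characterization:1}, each summand becomes $\exp\bigl( - M \cdot \FBthe(\vgam) + o(M) \bigr)$, with the $o(M)$ term understood to be uniform over $\vgam \in \GamMnthe$.

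For the upper bound on $\limsup_{M \to \infty} \permbM{M}(\mtheta)$, I would bound the sum by its number of terms times its largest term, i.e.\ $\bigl( \permbM{M}(\mtheta) \bigr)^{M} \leq |\GamMnthe| \cdot \exp\bigl( - M \cdot \min_{\vgam \in \GamMnthe} \FBthe(\vgam) + o(M) \bigr)$. Since $\GamMnthe \subseteq \Gamnthe$ yields $\min_{\vgam \in \GamMnthe} \FBthe(\vgam) \geq \min_{\vgam \in \Gamnthe} \FBthe(\vgam)$, and since the crude estimate $|\GamMnthe| \leq (M+1)^{n^{2}}$ contributes only $\tfrac{1}{M}\log|\GamMnthe| \to 0$ after taking the $M$-th root, I obtain $\limsup_{M \to \infty} \permbM{M}(\mtheta) \leq \exp\bigl( - \min_{\vgam \in \Gamnthe} \FBthe(\vgam) \bigr) = \permb(\mtheta)$, recalling~\eqref{sec:1:eqn:192}.

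For the matching lower bound I would retain a single summand. Let $\vgam^{\star} \in \Gamnthe$ attain $\min_{\vgam \in \Gamnthe} \FBthe(\vgam)$. The key step is to produce a sequence $\vgam_{M} \in \GamMnthe$ with $\vgam_{M} \to \vgam^{\star}$: by the Birkhoff--von Neumann theorem applied to the bipartite support pattern of $\mtheta$ one has $\Gamnthe = \conv\bigl\{ \mP_{\sigma} \mid \sigma \in \setS_{[n]}(\mtheta) \bigr\}$, so $\vgam^{\star} = \sum_{k} \lambda_{k} \mP_{\sigma_{k}}$ with each $\mP_{\sigma_{k}} \in \setA(\mtheta)$; rounding the finitely many weights $\lambda_{k}$ to multiples of $1/M$ (absorbing the deficit into one of them so that they still sum to $1$) yields a convex combination $\vgam_{M} \in \GamMnthe$ converging to $\vgam^{\star}$. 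Keeping only this term gives $\permbM{M}(\mtheta) \geq \exp\bigl( - \FBthe(\vgam_{M}) + o(1) \bigr)$, and the continuity of $\FBthe = \UBthe - \HBthe$ on the compact set $\Gamnthe$ (where the convention $0 \cdot \log 0 = 0$ handles the boundary and the support constraint keeps $\UBthe$ finite) gives $\FBthe(\vgam_{M}) \to \FBthe(\vgam^{\star})$. Hence $\liminf_{M \to \infty} \permbM{M}(\mtheta) \geq \permb(\mtheta)$, which together with the upper bound proves the claim.

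I expect the principal obstacle to be this lower-bound step: one must show that the resolution-$1/M$ grid $\GamMnthe$ approximates \emph{every} point of $\Gamnthe$ --- including boundary points where some $\gamma(i,j) \in \{0,1\}$ --- by matrices that are \emph{exactly} doubly stochastic and support-respecting, which is precisely what forces a Birkhoff decomposition followed by rational rounding rather than naive coordinatewise truncation. A secondary technical point is to verify that the $o(M)$ error in Proposition~\ref{prop:coefficient:asymptotitic:characterization:1} is uniform in $\vgam$, so that it can be extracted both from the finite sum in the upper bound and from the single retained term in the lower bound; the conservative $\limsup$ in the statement leaves room for this, even though the argument above in fact produces a genuine limit.
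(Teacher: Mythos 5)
Your proof is correct, but it does not follow the paper's route for this particular statement: the paper proves Proposition~\ref{sec:1:thm:8} by citation alone, deferring entirely to \cite[Section~VI]{Vontobel2013a}. What you have written is, in effect, the paper's own proof of the scaled-Sinkhorn analogue (Proposition~\ref{sec:1:prop:14}, proven in Appendix~\ref{apx:24}) transplanted to the Bethe case: there, too, the argument is (i) the sum representation from Lemma~\ref{lem: expression of permanents w.r.t. C}, (ii) the coefficient asymptotics from Proposition~\ref{prop:coefficient:asymptotitic:characterization:1}, (iii) polynomial growth of $|\GamMnthe|$ to replace the sum by its largest term, and (iv) density of $\Gampnthe = \bigcup_{M} \GamMnthe$ in the compact set $\Gamnthe$. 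Three remarks on your version. First, there is no circularity: both \eqref{sec:1:eqn:190} and \eqref{sec:1:eqn:206} are imported from the graph-cover-counting paper \cite{Vontobel2013} (Lemma~29 and Section~VII, respectively), not from \cite[Section~VI]{Vontobel2013a}, so your proof legitimately reconstructs the cited result from ingredients the paper establishes independently. Second, your uniformity concern about the $o(M)$ term is real but easily discharged: \eqref{sec:1:eqn:206} follows from the closed form \eqref{sec:1:eqn:213} by Stirling's formula with error $O(n^2 \log M)$ uniformly over the grid, and this uniformity is genuinely needed in your lower bound, since you evaluate the asymptotics along a sequence $\vgam_M$ that varies with $M$. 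Third, your Birkhoff-decomposition-plus-rounding construction is a welcome filling-in of the density step that the paper merely asserts in Appendix~\ref{apx:24}; the refinement it rests on --- that a doubly stochastic matrix is a convex combination of permutation matrices supported inside its own support --- follows from the standard inductive proof of the Birkhoff--von Neumann theorem, so the sequence $\vgam_M$ indeed stays in $\GamMnthe$ and the continuity of $\FBthe$ restricted to $\Gamnthe$ applies. In short: the citation buys the paper brevity, while your argument buys a self-contained and uniform treatment of $\permb(\mtheta)$ and $\permscs(\mtheta)$ within one framework, and it additionally shows that the $\limsup$ in \eqref{sec:1:eqn:193} is an actual limit.
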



\begin{proof}
  See~\cite[Section~VI]{Vontobel2013a}.
\end{proof}


The Sinkhorn free energy function was defined in~\cite{Vontobel2014}. Here we
consider a variant called the scaled Sinkhorn free energy function that was
introduced in~\cite{N.Anari2021}. These free energy functions are also defined
over $\Gamma_{n}(\mtheta)$.


\begin{definition} 
  \label{sec:1:def:20}

  Consider the S-NFG $ \sfN(\mtheta) $. The scaled Sinkhorn free energy
  function associated with $ \sfN(\mtheta) $ is defined to be
  \begin{align*}
    \FscSthe: \ \Gamma_{n}(\vtheta) &\to \sR \\
                \vgam          &\mapsto \UscSthe( \vgam ) - \HscSthe( \vgam )
  \end{align*}
  with
  \begin{align*}
    \UscSthe(\vgam) 
      &\defeq
         - 
         \sum_{ i,j }
           \gamma(i,j) \cdot \log\bigl( \theta(i,j) \bigr)\, , \\
    \HscSthe(\vgam)
      &\defeq 
         - \,
         n
         -
         \sum_{ i,j }
           \gamma(i,j) \cdot 
             \log\bigl( \gamma(i,j) \bigr)\, .
    \end{align*}
    Here, $ \UscSthe $ and $ \HscSthe $ are called the scaled Sinkhorn average
    energy function and the scaled Sinkhorn entropy function,
    respectively. With this, the scaled Sinkhorn permanent is defined to be
    \begin{align}
      \permscs(\mtheta)
        &\defeq 
           \exp
             \biggl(
               - \min_{\vgam \in \Gamnthe} \FscSthe( \vgam ) 
             \biggr)\, .
          \label{sec:1:eqn:192:part:2}
    \end{align}
    \edefinition
\end{definition}


Note that $\HscSthe(\vgam)$ can be obtained from $\HBthe( \vgam )$ by
approximating $\bigl( 1 - \gamma(i,j) \bigr) \cdot \log( 1 - \gamma(i,j) )$ by
the first-order Taylor series expansion term $- \gamma(i,j)$ and using the
fact that the sum of all entries of the matrix
$ \vgam \in \Gamma_{n}(\vtheta) $ equals $n$.


In the following, we introduce the degree-$M$ scaled Sinkhorn permanent of
$\mtheta$, which yields a combinatorial characterization of the scaled
Sinkhorn permanent of $\mtheta$. (Note that the definition of
$\permscs(\mtheta)$ in~\eqref{sec:1:eqn:192:part:2} is analytical in the sense
that $\permscs(\mtheta)$ is given by the solution of an optimization problem.)


\begin{definition}
  \label{def:Sinkhorn:degree:M:cover:1}

  Consider the S-NFG $ \sfN(\mtheta) $. Let $M \in \sZpp$.  The degree-$M$
  scaled Sinkhorn permanent is defined to be
  \begin{align*}
    \permscsM{M}(\mtheta)
      &\defeq 
         \sqrt[M]{
           \perm
             \Bigl(
               \bigl\langle
                 \mtheta^{\uparrow \mP_{M}}
               \bigr\rangle_{\mP_{M} \in \tPsi_{M} }
             \Bigr)
         } \nonumber \\
      &= \sqrt[M]{
           \perm\bigl( \mtheta \otimes \mU_{M,M}\bigr)
         } \, , 
  \end{align*}
  where $\otimes$ denotes the Kronecker product of two matrices and where
  $ \mU_{M,M} $ is the matrix of size $M \times M$ with all entries equal to
  $1/M$.\edefinition
\end{definition}


\begin{proposition}
  \label{sec:1:prop:14}
    
  Consider the S-NFG $ \sfN(\mtheta) $. It holds that
  \begin{align}
    \limsup_{M \to \infty} \,
      \permscsM{M}(\mtheta)
      &= \permscs(\mtheta)\, . 
            \label{sec:1:eqn:193:part:2}
  \end{align}
  The limit in~\eqref{sec:1:eqn:193:part:2} is visualized in
  Fig.~\ref{fig:combinatorial:characterization:1}~(right).
\end{proposition}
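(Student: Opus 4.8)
The plan is to turn the quantity $\bigl(\permscsM{M}(\mtheta)\bigr)^{M} = \perm(\mtheta \otimes \mU_{M,M})$ into a weighted sum over types and then extract its exponential growth rate by a Laplace-type (method-of-types) argument. First I would expand $\perm(\mtheta \otimes \mU_{M,M})$ combinatorially: a perfect matching of the bipartite graph underlying $\mtheta \otimes \mU_{M,M}$ sends, for each block pair $(i,j)$, some number $M\gamma(i,j) \in \sZp$ of row-vertices of block $i$ into column-block $j$, and the resulting matrix $\vgam = (\gamma(i,j))_{i,j}$ lies in $\GamMn$. Grouping matchings by this type $\vgam$ recovers exactly the expansion of Lemma~\ref{lem: expression of permanents w.r.t. C}, namely $\bigl(\permscsM{M}(\mtheta)\bigr)^{M} = \sum_{\vgam \in \GamMnthe} \mtheta^{M \cdot \vgam}\cdot \CscSgen{M}{n}(\vgam)$, with the explicit coefficient $\CscSgen{M}{n}(\vgam) = M^{-Mn}(M!)^{2n}\big/\prod_{i,j}(M\gamma(i,j))!$ obtained by partitioning each row-block and each column-block according to $\vgam$ and then matching within blocks. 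Only $\vgam$ with support inside $\supp(\mtheta)$ contribute, since otherwise $\mtheta^{M\cdot\vgam}=0$.

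Next I would pass to logarithms and apply Stirling's formula to each factorial. Using $\mtheta^{M\cdot\vgam} = \exp(-M\cdot\UscSthe(\vgam))$ together with $\sum_{i,j}\gamma(i,j)=n$ for doubly stochastic $\vgam$, the $M\log M$ contributions cancel and one obtains, uniformly over $\vgam \in \GamMnthe$, the estimate $\frac{1}{M}\log\bigl(\mtheta^{M\cdot\vgam}\cdot\CscSgen{M}{n}(\vgam)\bigr) = -\FscSthe(\vgam) + O\!\bigl(\tfrac{\log M}{M}\bigr)$, the uniformity coming from the fact that only $n^{2}+2n$ factorials appear and each Stirling error is $O(\log M)$. (Equivalently, this is the content of Proposition~\ref{prop:coefficient:asymptotitic:characterization:1}, whose error term is in fact $O(\log M)$ and hence uniform.)

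Then I would sandwich the sum between its largest summand and the number of summands times its largest summand. Since $|\GamMnthe| \le (M+1)^{n^{2}}$, we have $|\GamMnthe|^{1/M}\to 1$, so taking $M$-th roots yields $\permscsM{M}(\mtheta) = \exp\bigl(-\min_{\vgam \in \GamMnthe}\FscSthe(\vgam) + o(1)\bigr)$. The upper bound is then immediate: because $\GamMnthe \subseteq \Gamnthe$, we have $\min_{\vgam\in\GamMnthe}\FscSthe(\vgam) \ge \min_{\vgam\in\Gamnthe}\FscSthe(\vgam)$, whence $\limsup_{M\to\infty}\permscsM{M}(\mtheta) \le \permscs(\mtheta)$. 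For the matching lower bound, let $\vgam^\star$ be a minimizer of the continuous function $\FscSthe$ over the compact set $\Gamnthe$; I would write $\vgam^\star = \sum_k \lambda_k \mP_{\sigma_k}$ by the Birkhoff--von Neumann theorem (Birkhoff's algorithm producing each $\mP_{\sigma_k}$ with support inside $\supp(\vgam^\star)\subseteq\supp(\mtheta)$), round the weights to $a_k/M$ with $a_k\in\sZp$ and $\sum_k a_k = M$, and set $\vgam_M \defeq \sum_k (a_k/M)\mP_{\sigma_k} \in \GamMnthe$. Then $\vgam_M \to \vgam^\star$, and continuity of $\FscSthe$ on $\Gamnthe$ (with the convention $0\log 0 = 0$) gives $\FscSthe(\vgam_M)\to\FscSthe(\vgam^\star)$, so $\liminf_{M\to\infty}\permscsM{M}(\mtheta)\ge\exp(-\FscSthe(\vgam^\star)) = \permscs(\mtheta)$. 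Combining the two bounds shows the limit exists and equals $\permscs(\mtheta)$, which in particular yields~\eqref{sec:1:eqn:193:part:2}.

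The main obstacle is the lower bound, specifically the approximation of the continuous minimizer $\vgam^\star$ by a sequence $\vgam_M \in \GamMnthe$ that simultaneously (i) is doubly stochastic with entries in $\tfrac{1}{M}\sZp$, (ii) keeps its support inside $\supp(\mtheta)$, and (iii) converges to $\vgam^\star$. The Birkhoff decomposition followed by rational rounding of its coefficients is what makes all three hold at once; the remaining ingredients (Stirling's formula, the polynomial bound on $|\GamMnthe|$, and continuity of $\FscSthe$) are routine.
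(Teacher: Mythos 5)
Your proposal is correct and follows essentially the same route as the paper's proof: the type expansion $\bigl(\permscsM{M}(\mtheta)\bigr)^{M} = \sum_{\vgam \in \GamMnthe} \mtheta^{M \cdot \vgam} \cdot \CscSgen{M}{n}(\vgam)$ (Lemma~\ref{lem: expression of permanents w.r.t. C}), Stirling asymptotics for the coefficients (Proposition~\ref{prop:coefficient:asymptotitic:characterization:1}), the polynomial bound on $|\GamMnthe|$ to replace the sum by its maximum, and finally passing from the discrete minimization over $\GamMnthe$ to the continuous one over $\Gamnthe$. The only difference is that you make explicit two points the paper leaves implicit---the uniformity in $\vgam$ of the $O(\log M)$ Stirling error (needed to push the asymptotics through the maximum over a set whose size grows with $M$), and a constructive Birkhoff-plus-rounding proof of the density of $\bigcup_{M} \GamMnthe$ in $\Gamnthe$ respecting the support constraint---which strengthens rather than changes the argument.
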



\begin{proof}
  See Appendix~\ref{apx:24}. (Note that this proof needs some results from
  Lemma~\ref{lem: expression of permanents w.r.t. C} and
  Proposition~\ref{prop:coefficient:asymptotitic:characterization:1}, which
  are proven only in the upcoming sections. Of course, Lemma~\ref{lem:
    expression of permanents w.r.t. C} and
  Proposition~\ref{prop:coefficient:asymptotitic:characterization:1} are
  proven independently of the statement in Proposition~\ref{sec:1:prop:14}.)
\end{proof}


Except for small values of $n$, computing $ \permbM{M}(\mtheta) $ and
$ \permscsM{M}(\mtheta) $ appears to be intractable in general for finite
$M$.\footnote{More formally, we leave it as an open problem to show that
  computing $ \permbM{M}(\mtheta) $ and $ \permscsM{M}(\mtheta) $ is in the
  complexity class \#P for finite $ M \in \sZpp $.} However, as $ M $ goes to
infinity, the limit superior of $ \permbM{M}(\mtheta) $ and
$ \permscsM{M}(\mtheta) $ are equal to $ \permb(\mtheta) $ and
$ \permscs(\mtheta) $, respectively, and they can be computed
efficiently~\cite{Vontobel2013a,Linial2000}.

Note that the modified Gibbs entropy function $ \HG'(\vgam) $, the Bethe entropy function  $ \HBthe( \vgam ) $, and the scaled Sinkhorn entropy function $ \HscSthe(\vgam) $, as defined in Definitions~\ref{sec:1:def:18:part:2},~\ref{sec:1:def:16}, and~\ref{sec:1:def:20}, respectively, are well defined for $ \vgam \in \Gamma_n $, not just for $ \vgam \in \Gamma_n(\vtheta) $. This observation allows us to consider $ \vgam \in \Gamma_{M,n} $ 
in Proposition~\ref{prop:coefficient:asymptotitic:characterization:1}, \ie, $\vgam$ is an element of a set that is independent of $\mtheta$.


\ifx\withclearpagecommands\x
\clearpage
\fi

\section{Recursions for \texorpdfstring{the \\
              Coefficients
              $\CM{n}( \vgam )$,
              $\CBM{n}( \vgam )$, 
              $\CscSgen{M}{n}( \vgam )$}{}}
\label{sec:recursions:coefficients:1}


In the first part of this section, we prove Lemma~\ref{lem: expression of
  permanents w.r.t. C}. Afterwards, we prove some lemmas that will help us in
the proofs of Theorems~\ref{thm: inequalities for the coefficients}
and~\ref{th:main:permanent:inequalities:1} in the next section. These latter
lemmas are recursions (in $M$) on the coefficients $\CM{n}( \vgam )$,
$\CBM{n}( \vgam )$, and $\CscSgen{M}{n}( \vgam )$. Note that these recursions
on the coefficients $\CM{n}( \vgam )$, $\CBM{n}( \vgam )$, and
$\CscSgen{M}{n}( \vgam )$ are somewhat analogous to recursions for multinomial
coefficients, in particular also recursions for binomial coefficients like the
well-known recursion ${M \choose k} = {M-1 \choose k-1} + {M-1 \choose
  k}$. (See also the discussion in
Example~\ref{example:pascal:triangle:generalization:1}).


Throughout this section, the positive integer $n$ and the matrix
$\mtheta \in \sR_{\geq 0}^{n \times n}$ are fixed. Recall the definition of
the notation $\vsigma_{[M]}$ in Definition~\ref{sec:1:def:10}.


\begin{definition}
  \label{sec:1:lem:43}

  Consider $M \in \sZpp $ and $ \vgam \in \Gamma_{M,n} $.
  \begin{enumerate}

  \item The coefficient $ \CM{n}( \vgam ) $ is defined to be the
    number of $ \vsigma_{[M]} $ in $ \setS_{[n]}^{M} $ such that
    $ \vsigma_{[M]} $ decomposes $ \vgam $, \ie,
    \begin{align}
      \CM{n}( \vgam )
        &\defeq
           \sum_{ \vsigma_{[M]} \in \setS_{[n]}^{M} }
             \Bigl[ 
               \vgam = \bigl\langle \mP_{\sigma_{m}} \bigr\rangle_{m \in [M]}
             \Bigr]\, ,
               \label{sec:1:eqn:56}
    \end{align}
    where
    $\bigl\langle \mP_{\sigma_{m}} \bigr\rangle_{m \in [M]} \defeq \frac{1}{M}
    \cdot \sum\limits_{m\in [M]} \mP_{\sigma_{m}}$ and where the notation
    $[S]$ represents the Iverson bracket, \ie, $[S] \defeq 1$ if the statement
    $S$ is true and $[S] \defeq 0$ if the statement is false.
    
  \item The coefficient $ \CBM{n}(\vgam) $ is defined to be
    \begin{align*}
      \CBM{n}(\vgam) 
        &\defeq
           (M!)^{ 2n -  n^2} 
           \cdot
           \prod_{i,j}
             \frac{ \bigl(M - M \cdot \gamma(i,j) \bigr)! }
                  { \bigl(M \cdot \gamma(i,j) \bigr)! } \, .
    \end{align*}
    
  \item The coefficient $ \CscSgen{M}{n}(\vgam) $ is defined to be
    \begin{align*}
      \CscSgen{M}{n}(\vgam)
        &\defeq
           M^{-n\cdot M} 
           \cdot
           \frac{ (M!)^{2n} }
                { \prod_{i,j}
                    \bigl( M \cdot \gamma(i,j) \bigr)! 
                }\, .
    \end{align*}

  \end{enumerate}
  Note that for $M = 1$ it holds that
  \begin{align*}
    \Cgen{1}{n}(\vgam) 
      &= \CBgen{1}{n}(\vgam) 
       = \CscSgen{1}{n}(\vgam) 
       = 1\, ,
           \quad \vgam \in \Gamma_{1,n}\, .
  \end{align*}
  \edefinition
\end{definition}


We are now in a position to prove Lemma~\ref{lem: expression of permanents
  w.r.t. C} in Section~\ref{sec:main:constributions:1}.


\medskip

\noindent
\textbf{Proof of Lemma~\ref{lem: expression of permanents w.r.t. C}}.
\begin{enumerate}

\item The expression in~\eqref{sec:1:eqn:43} follows from
  \begin{align*}
    \bigl( \perm(\mtheta) \bigr)^{\! M}
      &= \Biggl(
           \sum_{\sigma \in \setS_{[n]}} 
             \prod_{i \in [n]}
               \theta\bigl( i, \sigma(i) \bigr)
         \Biggr)^{\!\!\! M} \\
      &\overset{(a)}{=}
         \sum_{\vgam \in \GamMnthe} 
           \mtheta^{ M \cdot \vgam }
           \cdot
           \CM{n}( \vgam )\, , 
  \end{align*}
  where step~$(a)$ follows from expanding the right-hand side of the first
  line, along with using the Birkhoff--von Neumann theorem and
  Eq.~\eqref{sec:1:eqn:56}.

  While it is clear that $\CM{n}( \vgam ) \geq 0$ for all
    $\vgam \in \GamMnthe$, note that the first inequality
    in~\eqref{sec:1:eqn:58} in Theorem~\ref{thm: inequalities for the
      coefficients} (proven later in this paper) can be used to show that
    $\CM{n}( \vgam ) > 0$ for all $\vgam \in \GamMnthe$.

\item 

    Eq.~\eqref{sec:1:eqn:190} follows from~\cite[Lemma~29]{Vontobel2013},
    where the role of the set $\set{B}'_M$ in~\cite[Lemma~29]{Vontobel2013} is
    played by the set $\Gamma'_{M,n}(\mtheta)$ in this paper.

    The set $\Gamma'_{M,n}(\mtheta)$ is defined as follows. Namely, as
    mentioned in Section~\ref{sec:free:energy:functions:1}, the paper
    \cite[Section~IV]{Vontobel2013a} shows that the local marginal polytope of
    $\sfN(\mtheta)$ can be parameterized by the set of doubly stochastic
    matrices, more precisely it can be parameterized by
    $\Gamma_{n}(\mtheta)$. For arbitrary $ M \in \sZ_{\geq 1} $, the set
    $\Gamma'_{M,n}(\mtheta) \subseteq \Gamma_{n}(\mtheta)$ is then defined to
    be the parameterization of the set of pseudo-marginal vectors in the local
    marginal polytope that are $M$-cover lift-realizable pseudo-marginal
    vectors. (For the definition of $M$-cover lift-realizable pseudo-marginal
    vectors, see~\cite[Section~VI]{Vontobel2013}.)

    We claim $\Gamma'_{M,n}(\mtheta) = \Gamma_{M,n}(\mtheta)$. Indeed, it
    follows from the construction of pseudo-marginal vectors based on
    $M$-covers that $\Gamma'_{M,n}(\mtheta) \subseteq \Gamma_{M,n}(\mtheta)$,
    and it follows from the construction in the proof
    of~\cite[Lemma~29]{Vontobel2013} that for every element of
    $\Gamma_{M,n}(\mtheta)$ the corresponding pseudo-marginal vector is
    $M$-cover lift-realizable, \ie,
    $\Gamma'_{M,n}(\mtheta) \supseteq \Gamma_{M,n}(\mtheta)$.

\item Eq.~\eqref{sec:1:eqn:168} follows
  from~\cite[Theorem~2.1]{Friedland1979}.

\elemmaproof

\end{enumerate}


The matrices that are introduced in the following definition will turn out to
be useful to formulate the upcoming results.


\begin{definition}    
  \label{sec:1:def:10:part:2}
  \label{sec:1:def:15}

  Consider a matrix $ \vgam \in \Gamma_n $.
  \begin{enumerate}
    
  \item We define
    \begin{align*}
      \setR 
        &\defeq 
           \setR(\vgam)
         \defeq 
           \bigl\{ 
             i \in [n] 
           \bigm|
             \exists j\in [n], \text{ s.t. } 0 < \gamma(i,j) < 1 
           \bigr\}\, , \\
      \setC
        &\defeq 
           \setC(\vgam)
         \defeq
           \bigl\{ 
             j \in [n]
           \bigm|
             \exists i \in [n], \text{ s.t. } 0 < \gamma(i,j) < 1
           \bigr\}\, ,
    \end{align*}
    \ie, the set $\setR$ consists of the row indices of rows where $\vgam$
    contains at least one fractional entry and, similarly, the set $\setC$
    consists of the column indices of columns where $\vgam$ contains at least
    one fractional entry. Based on $\setR$ and $\setC$, we define
    \begin{align*}
      \vgamRCp 
        &\defeq 
           \bigl( \gamma(i,j) \bigr)_{\! (i,j) \in \setR \times \setC}\, .
    \end{align*}
    On the other hand, the set $[n] \setminus \setR$ consists of the row
    indices of rows where $\vgam$ contains exactly one $1$ and $0$s
    otherwise. Similarly, the set $[n] \setminus \setC$ consists of the
    column indices of columns where $\vgam$ contains exactly one $1$ and $0$s
    otherwise.
  
  \item We define $ r(\vgam) \defeq |\setR|$. One can verify the
    following statements:
    \begin{itemize}

    \item $|\setR| = |\setC|$.

    \item Either $r(\vgam) = 0$ or $r(\vgam) \geq 2$, \ie,
      $r(\vgam) = 1$ is impossible.

    \item If $r(\vgam) = 0$, then
      $\bigl| \setS_{[n]}(\vgam) \bigr| = 1$.

    \item If $r(\vgam) > 0$, then $\vgamRCp \in \Gamma_{r(\vgam)}$, \ie, the
      matrix $\vgamRCp$ is a doubly stochastic matrix of size
      $r(\vgam) \times r(\vgam)$.

  \end{itemize}

  \item Based on the matrix $\vgamRCp$, we define the matrix
    \begin{align*}
      \hvgamRCp
        &\defeq
           \bigl(
             \hgamRCp(i,j)
           \bigr)_{\! (i,j) \in \setR \times \setC}\, ,
    \end{align*}
    where
    \begin{align*}
      \hgamRCp(i,j)
        &\defeq 
          \frac{\gamRCp(i,j) \cdot \bigl( 1- \gamRCp(i,j) \bigr)}
               {\Biggl(
                  \prod\limits_{(i',j') \in \setR \times \setC} 
                     \bigl( 1 - \gamRCp(i',j') \bigr)
                \Biggr)^{\!\!\! 1 / r(\vgam)}
               }\, .
    \end{align*}
    Note that the permanent of $ \hvgamRCp $ is given by
    \begin{align}
      &\perm(\hvgamRCp) \nonumber \\
        &= \sum\limits_{\sigma \in \setS_{\setR\to\setC} }
             \prod_{i \in \setR}
               \frac{\gamRCp\bigl(i,\sigma(i) \bigr) 
                     \cdot
                     \Bigl( 1- \gamRCp\bigl(i,\sigma(i) \bigr) \Bigr)
                    }
                 {\Biggl(
                    \prod\limits_{(i',j') \in \setR \times \setC} 
                       \bigl( 1 - \gamRCp(i',j') \bigr)
                  \Biggr)^{\!\!\! 1 / r(\vgam)}
                 } 
        \nonumber \\
        &= \frac{ 
                  \sum\limits_{\sigma \in \setS_{\setR\to\setC} }
                    \prod\limits_{i \in \setR}
                      \gamRCp\bigl(i,\sigma(i) \bigr)
                      \cdot
                      \Bigl( 1- \gamRCp\bigl(i,\sigma(i) \bigr) \Bigr)
                }
                { 
                  \prod\limits_{i \in \setR,j \in \setC}
                    \bigl( 1- \gamRCp(i,j) \bigr) 
                }\, .
        \label{eqn: permament of hvgamRCp}
    \end{align}

  \end{enumerate}
  \edefinition
\end{definition}


\begin{example}
  \label{example:vgam:1:1}

  Consider the matrix
  \begin{align*}
    \vgam 
       \defeq
         \frac{1}{3}
         \cdot
         \begin{pmatrix}
           3 & 0 & 0 & 0 \\
           0 & 0 & 3 & 0 \\
           0 & 1 & 0 & 2 \\
           0 & 2 & 0 & 1
         \end{pmatrix}
           \in \Gamma_4\, .
  \end{align*}
  We obtain $\setR = \{ 3, 4 \}$, $\setC = \{ 2, 4 \}$, $r(\vgam) =
  2$, 
  \begin{align*}
    \vgamRCp
      &= \frac{1}{3}
         \cdot
         \begin{pmatrix}
           1 & 2 \\
           2 & 1
         \end{pmatrix}\, ,
    \qquad
    \hvgamRCp
       = \begin{pmatrix}
           1 & 1 \\
           1 & 1
         \end{pmatrix}\, ,
  \end{align*}
  and $\perm(\hvgamRCp) = 2$.
  \eexample
\end{example}


\begin{definition}    
  \label{sec:1:def:10:part:2:add}

  Consider $M \in \sZ_{\geq 2}$, $ \vgam \in \Gamma_{M,n} $, and
  $ \sigma_1 \in \setS_{[n]}(\vgam) $. (The discussion after
    Definition~\ref{sec:1:def:10} ensures that such a $ \sigma_1 $ exists.)
  We define
  \begin{align*}
    \vgam_{\sigma_1}
      &\defeq
         \frac{1}{M-1} 
         \cdot
         \bigl( M \cdot \vgam - \mP_{\sigma_1} \bigr)\, .
  \end{align*}
  Note that $ \vgam_{\sigma_1} \in \Gamma_{M-1,n} $. 
  The matrix
  $\vgam_{\sigma_1}$ can be thought of, modulo normalizations, as being
  obtained by ``peeling off'' the permutation matrix $\mP_{\sigma_1}$ from
  $\vgam$. 
  \edefinition
\end{definition}


\begin{example}
  \label{example:vgam:1:2}

  Consider again the matrix $\vgam \in \Gamma_4$ that was introduced in
  Example~\ref{example:vgam:1:1}. Let $M = 3$. Note that
  $ \vgam \in \Gamma_{3,4} $. If $\sigma_1 \in \setS_{[n]}(\vgam)$ is chosen
  to be
  \begin{align*}
    \sigma_1(1) = 1\, , \ \ 
    \sigma_1(2) = 3\, , \ \ 
    \sigma_1(3) = 2\, , \ \ 
    \sigma_1(4) = 4\, ,
  \end{align*}
  then
  \begin{align*}
    \vgam_{\sigma_1}
       \defeq
         \frac{1}{2}
         \cdot
         \begin{pmatrix}
           2 & 0 & 0 & 0 \\
           0 & 0 & 2 & 0 \\
           0 & 0 & 0 & 2 \\
           0 & 2 & 0 & 0
         \end{pmatrix}
           \in \Gamma_{2,4}\, .
  \end{align*}
  On the other hand, if $\sigma_1 \in \setS_{[n]}(\vgam)$ is
  chosen to be
  \begin{align*}
    \sigma_1(1) = 1\, , \ \ 
    \sigma_1(2) = 3\, , \ \ 
    \sigma_1(3) = 4\, , \ \ 
    \sigma_1(4) = 2\, ,
  \end{align*}
  then
  \begin{align*}
    \vgam_{\sigma_1}
       \defeq
         \frac{1}{2}
         \cdot
         \begin{pmatrix}
           2 & 0 & 0 & 0 \\
           0 & 0 & 2 & 0 \\
           0 & 1 & 0 & 1 \\
           0 & 1 & 0 & 1
         \end{pmatrix}
           \in \Gamma_{2,4}\, .
  \end{align*}
  \eexample
\end{example}


The following three lemmas are the key technical results that will allow us to
prove Theorems~\ref{thm: inequalities for the coefficients}
and~\ref{th:main:permanent:inequalities:1} in the next section.


\begin{lemma}
  \label{sec:1:lem:29:copy:2}
  
  Let $ M \in \sZ_{\geq 2} $ and $ \vgam \in \Gamma_{M,n} $. It holds that
  \begin{align*}
     \CM{n}( \vgam )
        &= \sum_{\sigma_1 \in \setS_{[n]}( \vgam )}
             \Cgen{M-1}{n}\bigl( 
                \vgam_{\sigma_1}
        \bigr)\, .
    \end{align*}
\end{lemma}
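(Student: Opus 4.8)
The plan is to prove the recursion by partitioning the defining sum for $\CM{n}(\vgam)$ in~\eqref{sec:1:eqn:56} according to the value of the first coordinate $\sigma_1$ of the tuple $\vsigma_{[M]} = (\sigma_1, \ldots, \sigma_M)$. Writing $\CM{n}(\vgam) = \sum_{\sigma_1 \in \setS_{[n]}} N(\sigma_1)$, where $N(\sigma_1)$ denotes the number of $(M-1)$-tuples $(\sigma_2, \ldots, \sigma_M) \in \setS_{[n]}^{M-1}$ for which $\vgam = \bigl\langle \mP_{\sigma_m} \bigr\rangle_{m \in [M]}$, the goal reduces to showing that $N(\sigma_1) = \Cgen{M-1}{n}(\vgam_{\sigma_1})$ when $\sigma_1 \in \setS_{[n]}(\vgam)$ and $N(\sigma_1) = 0$ otherwise.

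First I would rewrite the decomposition constraint for a fixed $\sigma_1$. The identity $\vgam = \frac{1}{M}\bigl( \mP_{\sigma_1} + \sum_{m=2}^{M} \mP_{\sigma_m} \bigr)$ is equivalent to $\frac{1}{M-1}\bigl( M \cdot \vgam - \mP_{\sigma_1} \bigr) = \frac{1}{M-1} \sum_{m=2}^{M} \mP_{\sigma_m}$, and the left-hand side is exactly $\vgam_{\sigma_1}$ by Definition~\ref{sec:1:def:10:part:2:add}. Hence the tuple $(\sigma_2, \ldots, \sigma_M)$ contributes to $N(\sigma_1)$ precisely when it decomposes $\vgam_{\sigma_1}$, so that $N(\sigma_1)$ equals the number of $(M-1)$-tuples decomposing $\vgam_{\sigma_1}$. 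After relabeling the indices $2, \ldots, M$ as $1, \ldots, M-1$, this is by definition $\Cgen{M-1}{n}(\vgam_{\sigma_1})$, provided $\vgam_{\sigma_1} \in \Gamma_{M-1,n}$ so that the quantity is defined.

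The remaining step is to pin down exactly which $\sigma_1$ give a nonzero contribution. If some tuple with first coordinate $\sigma_1$ decomposes $\vgam$, then $\mP_{\sigma_1} \leq M \cdot \vgam$ entrywise (all summands are nonnegative), so $\gamma(i, \sigma_1(i)) > 0$ for every $i$, giving $\sigma_1 \in \setS_{[n]}(\vgam)$. Conversely, for $\sigma_1 \in \setS_{[n]}(\vgam)$, the integrality and positivity of $M \cdot \gamma(i, \sigma_1(i))$ force $M \cdot \vgam - \mP_{\sigma_1}$ to be entrywise nonnegative with all row and column sums equal to $M-1$, whence $\vgam_{\sigma_1} \in \Gamma_{M-1,n}$, exactly as recorded in Definition~\ref{sec:1:def:10:part:2:add}. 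Thus the contribution is $\Cgen{M-1}{n}(\vgam_{\sigma_1})$ for $\sigma_1 \in \setS_{[n]}(\vgam)$ and $0$ otherwise, and summing over $\sigma_1$ yields the claimed recursion.

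I do not anticipate a genuine obstacle here: the argument is a clean partition-by-first-coordinate count. The only point requiring slight care is the equivalence between ``$\sigma_1$ participates in a decomposition of $\vgam$'' and the support condition ``$\sigma_1 \in \setS_{[n]}(\vgam)$'', together with the bookkeeping that an $(M-1)$-tuple decomposing $\vgam_{\sigma_1}$ is counted by $\Cgen{M-1}{n}(\vgam_{\sigma_1})$.
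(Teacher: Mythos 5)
Your proof is correct and takes essentially the same approach as the paper: partitioning the $M$-tuples counted by $\CM{n}(\vgam)$ according to the first permutation $\sigma_1$, observing that only $\sigma_1 \in \setS_{[n]}(\vgam)$ can contribute, and identifying the residual count with $\Cgen{M-1}{n}(\vgam_{\sigma_1})$. The paper's own proof is a terser version of this argument (split into the cases $r(\vgam) = 0$ and $r(\vgam) \geq 2$); your writeup simply fills in the bookkeeping that the paper leaves implicit.
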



\begin{proof}
  If $r(\vgam) = 0$, then $\bigl| \setS_{[n]}( \vgam ) \bigr| = 1$ and
  the result in the lemma statement is straightforward. If
  $r(\vgam) \geq 2$, then the result in the lemma statement follows
  from the fact that all permutations in $ \setS_{[n]}(\vgam) $ are distinct
  and can be used as $ \sigma_1 $ in $ \vsigma_{[M]} $ for decomposing the
  matrix $ \vgam $.
\end{proof}


\begin{lemma}
  \label{sec:1:lem:15:copy:2}

  Let $M \in \sZ_{\geq 2}$ and $\vgam \in \Gamma_{M,n}$. Based on $\vgam$, we
  define the set $\setR$, the set $\setC$, and the matrix $\hvgamRCp$
  according to Definition~\ref{sec:1:def:15}. Moreover, for any
  $\sigma_1 \in \setS_{[n]}( \vgam )$, we define $\vgam_{\sigma_1}$ according
  to Definition~\ref{sec:1:def:10:part:2:add}. It holds
  that\footnote{Consider
       the case where both $\setR$ and $\setC$ are non-empty sets. Because each entry
      in $ \hvgamRCp $ is positive-valued, we have $\perm(\hvgamRCp) > 0$, and
      so the fraction $1 / \perm(\hvgamRCp)$ is well defined.}
  \begin{align*}
    \CBM{n}(\vgam)
      &= \frac{1}{\perm(\hvgamRCp)}
         \cdot
         \sum_{\sigma_1 \in \setS_{[n]}( \vgam )}
           \CBgen{M-1}{n}
             \bigl( 
               \vgam_{\sigma_1}
             \bigr)\, ,
  \end{align*}
  where we define $\perm(\hvgamRCp) \defeq 1$ if $\setR$ and $\setC$ are
  empty sets.
\end{lemma}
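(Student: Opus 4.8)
The plan is to compute the ratio $\CBgen{M-1}{n}(\vgam_{\sigma_1}) / \CBM{n}(\vgam)$ for each individual $\sigma_1 \in \setS_{[n]}(\vgam)$ directly from the explicit product formula~\eqref{sec:1:eqn:213}, then sum over $\sigma_1$ and recognize the result as the expression~\eqref{eqn: permament of hvgamRCp} for $\perm(\hvgamRCp)$. First I would dispose of the degenerate case $r(\vgam) = 0$, where $\setR$ and $\setC$ are empty, $\perm(\hvgamRCp) = 1$ by convention, $\setS_{[n]}(\vgam)$ is a singleton, and $\vgam$ is a permutation matrix with $\mP_{\sigma_1} = \vgam$, so that $\vgam_{\sigma_1} = \vgam$ and both sides collapse to $\CBM{n}(\vgam) = \CBgen{M-1}{n}(\vgam) = 1$ by a one-line evaluation of~\eqref{sec:1:eqn:213}.

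For $r(\vgam) \geq 2$, I would set $a_{ij} \defeq M \cdot \gamma(i,j) \in \sZp$ and $p_{ij} \defeq P_{\sigma_1}(i,j) \in \{0,1\}$, so that $(M-1)\cdot\gamma_{\sigma_1}(i,j) = a_{ij} - p_{ij}$ and $(M-1)-(M-1)\cdot\gamma_{\sigma_1}(i,j) = (M-a_{ij})-(1-p_{ij})$. Substituting~\eqref{sec:1:eqn:213} for both coefficients, the prefactor $\bigl((M-1)!/M!\bigr)^{2n-n^2} = M^{-(2n-n^2)}$ factors out and the remainder becomes a product over entries whose per-entry factor is $M\cdot\gamma(i,j)$ when $j = \sigma_1(i)$ and $1/\bigl(M(1-\gamma(i,j))\bigr)$ when $j \neq \sigma_1(i)$. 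The condition $\sigma_1 \in \setS_{[n]}(\vgam)$ (forcing $\gamma(i,\sigma_1(i)) > 0$) together with the double-stochasticity implication $\gamma(i,j) = 1 \Rightarrow j = \sigma_1(i)$ ensures that every factorial argument is non-negative and that $1/(1-\gamma(i,j))$ is only ever formed where $\gamma(i,j) < 1$.

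Collecting powers of $M$ — the $n$ entries with $j = \sigma_1(i)$ supply $M^{n}$ and the $n^2-n$ entries with $j \neq \sigma_1(i)$ supply $M^{-(n^2-n)}$ — yields $M^{-(2n-n^2)} \cdot M^{n} \cdot M^{-(n^2-n)} = 1$, so the $M$-dependence cancels entirely. Invoking the structure of $\vgam$ recorded in Definition~\ref{sec:1:def:15} (the $1$-entries constitute a permutation between $[n]\setminus\setR$ and $[n]\setminus\setC$, all fractional entries lie in $\setR\times\setC$, and $\sigma_1$ is forced on the rows in $[n]\setminus\setR$), the trivial factors drop out and I obtain
\[
  \frac{\CBgen{M-1}{n}(\vgam_{\sigma_1})}{\CBM{n}(\vgam)}
    = \frac{\prod_{i \in \setR} \gamma\bigl(i,\sigma_1(i)\bigr) \bigl(1-\gamma\bigl(i,\sigma_1(i)\bigr)\bigr)}
           {\prod_{(i,j) \in \setR\times\setC} \bigl(1-\gamma(i,j)\bigr)}.
\]

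Finally I would sum over $\sigma_1 \in \setS_{[n]}(\vgam)$. Since the restriction of $\sigma_1$ to $[n]\setminus\setR$ is fixed, $\sigma_1$ is in bijection with $\tau = \sigma_1|_{\setR} \in \setS_{\setR\to\setC}$ satisfying $\gamma(i,\tau(i)) > 0$, and the summand depends only on $\tau$; extending the sum to all of $\setS_{\setR\to\setC}$ only adds terms that vanish through the factor $\gamma(i,\tau(i))$. The resulting numerator matches exactly the numerator of~\eqref{eqn: permament of hvgamRCp} (using $\gamRCp = \gamma$ on $\setR\times\setC$), so the whole sum equals $\perm(\hvgamRCp)$ and the claimed recursion follows upon multiplying through by $\CBM{n}(\vgam)$. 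I expect the main obstacle to be purely bookkeeping: tracking the $0$- and $1$-valued entries of $\vgam$ so that the cancellation of the powers of $M$ and the reduction of the surviving product to the $\setR\times\setC$ block are both clean and never produce an ill-defined factorial or a division by zero. The block decomposition behind Definition~\ref{sec:1:def:15} is precisely the tool that keeps this under control.
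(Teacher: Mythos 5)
Your proposal is correct and takes essentially the same approach as the paper's proof in Appendix~\ref{app:sec:1:lem:15:copy:2}: both expand the coefficients via the explicit formula~\eqref{sec:1:eqn:213}, perform the per-entry case analysis on $j = \sigma_1(i)$ versus $j \neq \sigma_1(i)$, cancel all powers of $M$, reduce the surviving product to the $\setR \times \setC$ block, and identify the sum over $\sigma_1$ with $\perm(\hvgamRCp)$ via~\eqref{eqn: permament of hvgamRCp} and the bijection between $\setS_{[n]}(\vgam)$ and bijections $\setR \to \setC$ of positive weight. The only cosmetic differences are that you form the ratio $\CBgen{M-1}{n}(\vgam_{\sigma_1})/\CBM{n}(\vgam)$ for each $\sigma_1$ before summing, whereas the paper sums first and then divides, and that you dispose of the $r(\vgam) = 0$ case upfront rather than inside the final step.
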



\begin{proof}
  See Appendix~\ref{app:sec:1:lem:15:copy:2}.
\end{proof}


\begin{lemma}
  \label{sec:1:lem:30}
  
  Let $M \in \sZ_{\geq 2}$ and $\vgam \in \Gamma_{M,n}$. For any
  $\sigma_1 \in \setS_{[n]}( \vgam )$, we define $\vgam_{\sigma_1}$ according
  to Definition~\ref{sec:1:def:10:part:2:add}. It holds that\footnote{Note
    that $\vgam \in \Gamma_{M,n}$ and so $\perm(\vgam) \geq n! / n^n$ by van
    der Waerden's inequality. With this, the fraction $1 / \perm(\vgam)$ is
    well defined.}
  \begin{align*}
    \CscSgen{M}{n}(\vgam)
      &\!=\! \frac{1}{
           \bigr( \chi(M) \bigl)^{\! n}
           \, \cdot \,
           \perm(\vgam)
         }
        \cdot \!\!\!
        \sum_{\sigma_1 \in \setS_{[n]}( \vgam )} \!\!\!
          \CscSgen{M-1}{n}( \vgam_{\sigma_1} )\, , 
  \end{align*}
  where
  \begin{align*}
    \chi(M)
      &\defeq
         \biggl( \frac{M}{M-1} \biggr)^{\!\! M-1}\, ,
  \end{align*}
  which starts at $\bigl. \chi(M) \bigr|_{M = 2} \! = \! 2$
      and increases to $\bigl. \chi(M) \bigr|_{M \to \infty} \! = \! \e$.
\end{lemma}


\begin{proof}
  See Appendix~\ref{apx:23}.
\end{proof}


All objects and quantities in this paper assume $M \in \sZ_{\geq
  1}$. Interestingly, it is possible to define these objects and quantities
also for $M = 0$, at least formally. However, we will not do this except in
the following example, as it leads to nicer figures (see
Figs.~\ref{fig:pascal:triangle:generalization:1}%
--\ref{fig:pascal:triangle:generalization:3}).


\begin{example}
  \label{example:pascal:triangle:generalization:1}

  The purpose of this example is to visualize the recursions in
  Lemmas~\ref{sec:1:lem:29:copy:2}--\ref{sec:1:lem:30} for the special case
  $n = 2$, \ie, for matrices $\vgam$ of size $2 \times 2$. For,
  $k_1, k_2 \in \sZ_{\geq 0}$, define\footnote{The case $k_1 = k_2 = 0$ is
    special: the matrix $\vgam^{(0,0)}$ has to be considered as some formal
    matrix and the set $\Gamma_{0,2}$ is defined to be the set containing only
    the matrix $\vgam^{(0,0)}$.}
  \begin{align*}
    \vgam^{(k_1,k_2)}
      &\defeq
         \frac{1}{k_1 + k_2} 
         \cdot
         \begin{pmatrix}
           k_1 & k_2 \\
           k_2 & k_1
         \end{pmatrix}
           \in \Gamma_{k_1+k_2,2}\, .
             \quad
  \end{align*}
  Define
  \begin{align*}
    \Cgen{0}{n}\bigl( \vgam^{(0,0)} \bigr)
      &\defeq
         1\, , \ 
    \CBgen{0}{n}\bigl( \vgam^{(0,0)} \bigr)
       \defeq
         1\, , \ 
    \CscSgen{0}{n}\bigl( \vgam^{(0,0)} \bigr)
       \defeq
         1\, .
  \end{align*}
  One can the verify that with these definitions, the recursions in
  Lemmas~\ref{sec:1:lem:29:copy:2}--\ref{sec:1:lem:30} hold also for $M = 1$.


  With these definitions in place, we can draw diagrams that can be seen as
  generalizations of Pascal's triangle,\footnote{These generalizations of
    Pascal's triangle are drawn such that $M$ increases from the bottom to the
    top. This is different from the typical drawings of Pascal's triangle
    (where $M$ increases from the top to the bottom), but matches better the
    convention used in Fig.~\ref{fig:combinatorial:characterization:1} and
    similar figures in~\cite{Vontobel2013} (\ie, \cite[Figs.~7
    and~10]{Vontobel2013}), along with better matching the fact that
    $\permbM{M}(\mtheta)$ is based on \emph{liftings} of~$\mtheta$.} see
  Figs.~\ref{fig:pascal:triangle:generalization:1}%
  --\ref{fig:pascal:triangle:generalization:3}. The meaning of these figures
  is as follows. For example, consider the quantity
  $\Cgen{3}{n}\bigl(\vgam^{(1,2)}\bigr)$. According to
  Lemma~\ref{sec:1:lem:29:copy:2}, this quantity can be obtained via
  \begin{align}
    \Cgen{3}{n}\bigl(\vgam^{(1,2)}\bigr)
      &= 1 
           \cdot
           \Cgen{2}{n}\bigl(\vgam^{(1,1)}\bigr)
         +
         1
           \cdot
           \Cgen{2}{n}\bigl(\vgam^{(0,2)}\bigr) \nonumber \\
      &= 1
           \cdot 2
         +
         1
           \cdot 1 \nonumber \\
      &= 3 \, .
      \label{eq:CM:recursion:special:case:1}
  \end{align}
  The recursion in~\eqref{eq:CM:recursion:special:case:1} is visualized in
  Fig.~\ref{fig:pascal:triangle:generalization:1} as follows. First, the
  vertex/box above the label ``$\Cgen{3}{n}\bigl(\vgam^{(1,2)}\bigr)$''
  contains the value of $\Cgen{3}{n}\bigl(\vgam^{(1,2)}\bigr)$. Second, the
  arrows indicate that $\Cgen{3}{n}\bigl(\vgam^{(1,2)}\bigr)$ is obtained by
  adding $1$ times $\Cgen{2}{n}\bigl(\vgam^{(1,1)}\bigr)$ and $1$ times
  $\Cgen{3}{n}\bigl(\vgam^{(0,2)}\bigr)$.

  Moreover, notice that the value of $\Cgen{3}{n}\bigl(\vgam^{(1,2)}\bigr)$
  equals the number of directed paths from the vertex labeled
  ``$\Cgen{0}{n}\bigl(\vgam^{(0,0)}\bigr)$'' to the vertex labeled
  ``$\Cgen{3}{n}\bigl(\vgam^{(1,2)}\bigr)$''. In general, for
  $k_1, k_2 \in \sZ_{\geq 0}$, the value of
  $\Cgen{k_1+k_2}{n}\bigl(\vgam^{(k_1,k_2)}\bigr)$ equals the number of
  directed paths from the vertex labeled
  ``$\Cgen{0}{n}\bigl(\vgam^{(0,0)}\bigr)$'' to the vertex labeled
  ``$\Cgen{k_1+k_2}{n}\bigl(\vgam^{(k_1,k_2)}\bigr)$''. In this context,
  recall from
  Proposition~\ref{prop:coefficient:asymptotitic:characterization:1} that
  \begin{align*}
    \Cgen{k_1+k_2}{n}\bigl(\vgam^{(k_1,k_2)}\bigr)
      &\approx
         \exp
           \Bigl(
             (k_1 + k_2) \cdot \HG'\bigl( \vgam^{(k_1,k_2)} \bigr)
           \Bigr)\, ,
  \end{align*}
  where the approximation is up to $o(k_1+k_2)$ in the exponent.


  Similar statements can be made for
  $\CBgen{k_1+k_2}{n}\bigl(\vgam^{(k_1,k_2)}\bigr)$ and
  $\CscSgen{k_1+k_2}{n}\bigl(\vgam^{(k_1,k_2)}\bigr)$ in
  Fig.~\ref{fig:pascal:triangle:generalization:2} and
  Fig.~\ref{fig:pascal:triangle:generalization:3}, respectively. Namely, for
  $k_1, k_2 \in \sZ_{\geq 0}$, the value of
  $\CBgen{k_1+k_2}{n}\bigl(\vgam^{(k_1,k_2)}\bigr)$ equals the sum of weighted
  directed paths from the vertex labeled
  ``$\CBgen{0}{n}\bigl(\vgam^{(0,0)}\bigr)$'' to the vertex labeled
  ``$\CBgen{k_1+k_2}{n}\bigl(\vgam^{(k_1,k_2)}\bigr)$'', and
  $\CscSgen{k_1+k_2}{n}\bigl(\vgam^{(k_1,k_2)}\bigr)$ equals the sum of
  weighted directed paths from the vertex labeled
  ``$\CscSgen{0}{n}\bigl(\vgam^{(0,0)}\bigr)$'' to the vertex labeled
  ``$\CscSgen{k_1+k_2}{n}\bigl(\vgam^{(k_1,k_2)}\bigr)$''. (Here, the weight
  of a directed path is given by the product of the weights of the directed
  edges in the path.) In this context, recall from
  Proposition~\ref{prop:coefficient:asymptotitic:characterization:1} that
  \begin{align*}
  \CBgen{k_1+k_2}{n}\bigl(\vgam^{(k_1,k_2)}\bigr)
    &\approx
       \exp
         \Bigl(
           (k_1+k_2) \cdot \HBthe\bigl( \vgam^{(k_1,k_2)} \bigr)
         \Bigr)\, , \\
  \CscSgen{k_1+k_2}{n}\bigl(\vgam^{(k_1,k_2)}\bigr)
    &\approx
       \exp
         \Bigl(
           (k_1+k_2) \cdot \HscSthe\bigl( \vgam^{(k_1,k_2)} \bigr)
         \Bigr)\, ,
  \end{align*}
  where the approximation is up to $o(k_1+k_2)$ in the exponent.
  \eexample
\end{example}


\begin{figure}
  \begin{center}
    \begin{tikzpicture}
  {\scriptsize

  \node at (0,0) 
    {\parbox{2cm}{\centering
                    \fbox{$1$} \\[0.05cm]
                    $\Cgen{0}{n}\bigl(\vgam^{(0,0)}\bigr)$
                    
                 }
    };

  \draw[->] (-0.25,+0.50) -- node[below left]  {$1$} (-0.75,+1.50);
  \draw[->] (+0.25,+0.50) -- node[below right] {$1$} (+0.75,+1.50);

  \node at (-1,+2) 
    {\parbox{2cm}{\centering
                    \fbox{$1$} \\[0.05cm]
                    $\Cgen{1}{n}\bigl(\vgam^{(1,0)}\bigr)$
                 }
    };

  \node at (+1,+2) 
    {\parbox{2cm}{\centering
                    \fbox{$1$} \\[0.05cm]
                    $\Cgen{1}{n}\bigl(\vgam^{(0,1)}\bigr)$
                 }
    };

  \draw[->] (-1.25,+2.50) -- node[below left]  {$1$} (-1.75,+3.50);
  \draw[->] (-0.75,+2.50) -- node[below right] {$1$} (-0.25,+3.50);

  \draw[->] (+0.75,+2.50) -- node[below left]  {$1$} (+0.25,+3.50);
  \draw[->] (+1.25,+2.50) -- node[below right] {$1$} (+1.75,+3.50);

  \node at (-2,+4) 
    {\parbox{2cm}{\centering
                    \fbox{$1$} \\[0.05cm]
                    $\Cgen{2}{n}\bigl(\vgam^{(2,0)}\bigr)$
                 }
    };

  \node at ( 0,+4) 
    {\parbox{2cm}{\centering
                    \fbox{$2$} \\[0.05cm]
                    $\Cgen{2}{n}\bigl(\vgam^{(1,1)}\bigr)$
                 }
    };

  \node at (+2,+4) 
    {\parbox{2cm}{\centering
                    \fbox{$1$} \\[0.05cm]
                    $\Cgen{2}{n}\bigl(\vgam^{(0,2)}\bigr)$
                 }
    };

  \draw[->] (-2.25,+4.50) -- node[below left]  {$1$} (-2.75,+5.50);
  \draw[->] (-1.75,+4.50) -- node[below right] {$1$} (-1.25,+5.50);

  \draw[->] (-0.25,+4.50) -- node[below left]  {$1$} (-0.75,+5.50);
  \draw[->] (+0.25,+4.50) -- node[below right] {$1$} (+0.75,+5.50);

  \draw[->] (+1.75,+4.50) -- node[below left]  {$1$} (+1.25,+5.50);
  \draw[->] (+2.25,+4.50) -- node[below right] {$1$} (+2.75,+5.50);

  \node at (-3,+6) 
    {\parbox{2cm}{\centering
                    \fbox{$1$} \\[0.05cm]
                    $\Cgen{3}{n}\bigl(\vgam^{(3,0)}\bigr)$
                 }
    };

  \node at (-1,+6) 
    {\parbox{2cm}{\centering
                    \fbox{$3$} \\[0.05cm]
                    $\Cgen{3}{n}\bigl(\vgam^{(2,1)}\bigr)$
                 }
    };

  \node at (+1,+6) 
    {\parbox{2cm}{\centering
                    \fbox{$3$} \\[0.05cm]
                    $\Cgen{3}{n}\bigl(\vgam^{(1,2)}\bigr)$
                 }
    };

  \node at (+3,+6) 
    {\parbox{2cm}{\centering
                    \fbox{$1$} \\[0.05cm]
                    $\Cgen{3}{n}\bigl(\vgam^{(0,3)}\bigr)$
                 }
    };

  }
\end{tikzpicture}
    \medskip
    \caption{Generalization of Pascal's triangle visualizing the recursion in
      Lemma~\ref{sec:1:lem:29:copy:2}.}
    \label{fig:pascal:triangle:generalization:1}

    \vspace{1cm}

    \begin{tikzpicture}
  {\scriptsize

  \node at (0,0) 
    {\parbox{2cm}{\centering
                    \fbox{$1$} \\[0.05cm]
                    $\CBgen{0}{n}\bigl(\vgam^{(0,0)}\bigr)$
                 }
    };

  \draw[->] (-0.25,+0.50) -- node[below left]  {$1$} (-0.75,+1.50);
  \draw[->] (+0.25,+0.50) -- node[below right] {$1$} (+0.75,+1.50);

  \node at (-1,+2) 
    {\parbox{2cm}{\centering
                    \fbox{$1$} \\[0.05cm]
                    $\CBgen{1}{n}\bigl(\vgam^{(1,0)}\bigr)$
                 }
    };

  \node at (+1,+2) 
    {\parbox{2cm}{\centering
                    \fbox{$1$} \\[0.05cm]
                    $\CBgen{1}{n}\bigl(\vgam^{(0,1)}\bigr)$
                 }
    };

  \draw[->] (-1.25,+2.50) -- node[below left]  {$1$} (-1.75,+3.50);
  \draw[->] (-0.75,+2.50) -- node[below right] {$\frac{1}{2}$} (-0.25,+3.50);

  \draw[->] (+0.75,+2.50) -- node[below left]  {$\frac{1}{2}$} (+0.25,+3.50);
  \draw[->] (+1.25,+2.50) -- node[below right] {$1$} (+1.75,+3.50);

  \node at (-2,+4) 
    {\parbox{2cm}{\centering
                    \fbox{$1$} \\[0.05cm]
                    $\CBgen{2}{n}\bigl(\vgam^{(2,0)}\bigr)$
                 }
    };

  \node at ( 0,+4) 
    {\parbox{2cm}{\centering
                    \fbox{$1$} \\[0.05cm]
                    $\CBgen{2}{n}\bigl(\vgam^{(1,1)}\bigr)$
                 }
    };

  \node at (+2,+4) 
    {\parbox{2cm}{\centering
                    \fbox{$1$} \\[0.05cm]
                    $\CBgen{2}{n}\bigl(\vgam^{(0,2)}\bigr)$
                 }
    };

  \draw[->] (-2.25,+4.50) -- node[below left]  {$1$} (-2.75,+5.50);
  \draw[->] (-1.75,+4.50) -- node[below right] {$\frac{1}{2}$} (-1.25,+5.50);

  \draw[->] (-0.25,+4.50) -- node[below left]  {$\frac{1}{2}$} (-0.75,+5.50);
  \draw[->] (+0.25,+4.50) -- node[below right] {$\frac{1}{2}$} (+0.75,+5.50);

  \draw[->] (+1.75,+4.50) -- node[below left]  {$\frac{1}{2}$} (+1.25,+5.50);
  \draw[->] (+2.25,+4.50) -- node[below right] {$1$} (+2.75,+5.50);

  \node at (-3,+6) 
    {\parbox{2cm}{\centering
                    \fbox{$1$} \\[0.05cm]
                    $\CBgen{3}{n}\bigl(\vgam^{(3,0)}\bigr)$
                 }
    };

  \node at (-1,+6) 
    {\parbox{2cm}{\centering
                    \fbox{$1$} \\[0.05cm]
                    $\CBgen{3}{n}\bigl(\vgam^{(2,1)}\bigr)$
                 }
    };

  \node at (+1,+6) 
    {\parbox{2cm}{\centering
                    \fbox{$1$} \\[0.05cm]
                    $\CBgen{3}{n}\bigl(\vgam^{(1,2)}\bigr)$
                 }
    };

  \node at (+3,+6) 
    {\parbox{2cm}{\centering
                    \fbox{$1$} \\[0.05cm]
                    $\CBgen{3}{n}\bigl(\vgam^{(0,3)}\bigr)$
                 }
    };

  }
\end{tikzpicture}
    \medskip
    \caption{Generalization of Pascal's triangle visualizing the recursion in
      Lemma~\ref{sec:1:lem:15:copy:2}.}
    \medskip
    \label{fig:pascal:triangle:generalization:2}
  
    \vspace{1cm}

    \begin{tikzpicture}
  {\scriptsize

  \node at (0,0) 
    {\parbox{2cm}{\centering
                    \fbox{$1$} \\[0.05cm]
                    $\CscSgen{0}{n}\bigl(\vgam^{(0,0)}\bigr)$
                 }
    };

  \draw[->] (-0.25,+0.50) -- node[below left]  {$1$} (-0.75,+1.50);
  \draw[->] (+0.25,+0.50) -- node[below right] {$1$} (+0.75,+1.50);

  \node at (-1,+2) 
    {\parbox{2cm}{\centering
                    \fbox{$1$} \\[0.05cm]
                    $\CscSgen{1}{n}\bigl(\vgam^{(1,0)}\bigr)$
                 }
    };

  \node at (+1,+2) 
    {\parbox{2cm}{\centering
                    \fbox{$1$} \\[0.05cm]
                    $\CscSgen{1}{n}\bigl(\vgam^{(0,1)}\bigr)$
                 }
    };

  \draw[->] (-1.25,+2.50) -- node[below left]  {$\frac{1}{4}$} (-1.75,+3.50);
  \draw[->] (-0.75,+2.50) -- node[below right] {$\frac{1}{2}$} (-0.25,+3.50);

  \draw[->] (+0.75,+2.50) -- node[below left]  {$\frac{1}{2}$} (+0.25,+3.50);
  \draw[->] (+1.25,+2.50) -- node[below right] {$\frac{1}{4}$} (+1.75,+3.50);

  \node at (-2,+4) 
    {\parbox{2cm}{\centering
                    \fbox{$\frac{1}{4}$} \\[0.05cm]
                    $\CscSgen{2}{n}\bigl(\vgam^{(2,0)}\bigr)$
                 }
    };

  \node at ( 0,+4) 
    {\parbox{2cm}{\centering
                    \fbox{$1$} \\[0.05cm]
                    $\CscSgen{2}{n}\bigl(\vgam^{(1,1)}\bigr)$
                 }
    };

  \node at (+2,+4) 
    {\parbox{2cm}{\centering
                    \fbox{$\frac{1}{4}$} \\[0.05cm]
                    $\CscSgen{2}{n}\bigl(\vgam^{(0,2)}\bigr)$
                 }
    };

  \draw[->] (-2.25,+4.50) -- node[below left]  {$\frac{16}{81}$} (-2.75,+5.50);
  \draw[->] (-1.75,+4.50) -- node[below right] {$\frac{16}{45}$} (-1.25,+5.50);

  \draw[->] (-0.25,+4.50) -- node[below left]  {$\frac{16}{45}$} (-0.75,+5.50);
  \draw[->] (+0.25,+4.50) -- node[below right] {$\frac{16}{45}$} (+0.75,+5.50);

  \draw[->] (+1.75,+4.50) -- node[below left]  {$\frac{16}{45}$} (+1.25,+5.50);
  \draw[->] (+2.25,+4.50) -- node[below right] {$\frac{16}{81}$} (+2.75,+5.50);

  \node at (-3,+6) 
    {\parbox{2cm}{\centering
                    \fbox{$\frac{4}{81}$} \\[0.05cm]
                    $\CscSgen{3}{n}\bigl(\vgam^{(3,0)}\bigr)$
                 }
    };

  \node at (-1,+6) 
    {\parbox{2cm}{\centering
                    \fbox{$\frac{4}{9}$} \\[0.05cm]
                    $\CscSgen{3}{n}\bigl(\vgam^{(2,1)}\bigr)$
                 }
    };

  \node at (+1,+6) 
    {\parbox{2cm}{\centering
                    \fbox{$\frac{4}{9}$} \\[0.05cm]
                    $\CscSgen{3}{n}\bigl(\vgam^{(1,2)}\bigr)$
                 }
    };

  \node at (+3,+6) 
    {\parbox{2cm}{\centering
                    \fbox{$\frac{4}{81}$} \\[0.05cm]
                    $\CscSgen{3}{n}\bigl(\vgam^{(0,3)}\bigr)$
                 }
    };

  }
\end{tikzpicture}
    \medskip
    \caption{Generalization of Pascal's triangle visualizing the recursion in
      Lemma~\ref{sec:1:lem:30}.}
    \medskip
    \label{fig:pascal:triangle:generalization:3}
  \end{center}
\end{figure}


In the context of Example~\ref{example:pascal:triangle:generalization:1}, note
that for $n = 2$ the matrix
$\matr{1}_{2 \times 2} \defeq \bigl( \begin{smallmatrix} 1 & 1 \\ 1 &
  1 \end{smallmatrix} \bigr)$ achieves the upper bound $2^{n/2} = 2$
in~\eqref{eq:ratio:permanent:bethe:permanent:1}, and with that the matrix
$\matr{1}_{2 \times 2}$ is the ``worst-case matrix'' for $n = 2$ in the sense
that the ratio $\frac{\perm(\mtheta)}{\permb(\mtheta)} = 2$ for
$\mtheta = \matr{1}_{2 \times 2}$ is the furthest away from the ratio
$\frac{\perm(\mtheta)}{\permb(\mtheta)} = 1$ for diagonal
matrices~$\mtheta$. However, this behavior of the all-one matrix for $n = 2$
is atypical. Namely, letting $\matr{1}_{n \times n}$ be the all-one matrix for
$n \in \sZ_{\geq 1}$, one obtains, according to
\cite[Lemma~48]{Vontobel2013a},
\begin{align*}
  \left.
    \frac{\perm(\mtheta)}{\permb(\mtheta)}
  \right|_{\mtheta = \matr{1}_{n \times n}}
    &= \sqrt{\frac{2 \pi n}{\e}}
       \cdot
       \bigl( 1 + o_n(1) \bigr)\, ,
\end{align*}
\ie, the ratio
$\frac{\perm(\matr{1}_{n \times n})}{\permb(\matr{1}_{n \times n})}$ is much
closer to $1$ than to $2^{n/2}$ for large values of $n$. (In the above
expression, the term $o_n(\ldots)$ is based on the usual little-o notation for
a function in $n$.)


In the case of the scaled Sinkhorn permanent, the ``worst-case matrix'' is
given by the all-one matrix not just for $n = 2$, but for all
$n \in \sZ_{\geq 2}$, \ie, the ratio
$\frac{\perm(\mtheta)}{\permscs(\mtheta)} = \e^n \cdot \frac{n!}{n^n}$ for
$\mtheta = \matr{1}_{n \times n}$ is the furthest away from the value of the
ratio $\frac{\perm(\mtheta)}{\permscs(\mtheta)} = \e^n$ for diagonal matrices
$\mtheta$ for all $n \in \sZ_{\geq 2}$.


\ifx\withclearpagecommands\x
\clearpage
\fi

\section{Bounding the Permanent of \texorpdfstring{$\mtheta$}{}}
\label{sec:5}


In this section, we first prove Theorem~\ref{thm: inequalities for the
  coefficients} with the help of Lemmas~\ref{sec:1:lem:15:copy:2}
and~\ref{sec:1:lem:30}, where, in order for Lemmas~\ref{sec:1:lem:15:copy:2}
and~\ref{sec:1:lem:30} to be useful, we need to find lower and upper bounds on
$\perm(\hvgamRCp)$ and on $\perm(\vgam)$, respectively. Afterwards, we use
Theorem~\ref{thm: inequalities for the coefficients} to prove
Theorem~\ref{th:main:permanent:inequalities:1}.


\subsection{\texorpdfstring{Bounding the ratio $\perm(\mtheta) /  
                                                                                     \permbM{M}(\mtheta)$}{}}


In this subsection, we first prove the inequalities in~\eqref{sec:1:eqn:58} in
Theorem~\ref{thm: inequalities for the coefficients} that give bounds on the
ratio $\CM{n}(\vgam) / \CBM{n}(\vgam)$. Then, based on these bounds, we prove
the inequalities in~\eqref{sec:1:eqn:147} in
Theorem~\ref{th:main:permanent:inequalities:1} that give bounds on the ratio
$\perm(\mtheta) / \permbM{M}(\mtheta)$.


\begin{lemma}
  \label{sec:1:lem:17}

  Consider the setup in Lemma~\ref{sec:1:lem:15:copy:2}. The matrix
  $\hvgamRCp$ satisfies
  \begin{align}
    1
      &\leq  
         \perm(\hvgamRCp)
       \leq
         2^{n/2}\, .
           \label{sec:1:eqn:145}
  \end{align}
\end{lemma}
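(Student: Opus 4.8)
The plan is to recognize $\perm(\hvgamRCp)$ as the ratio of a permanent to a \emph{Bethe} permanent and then invoke the known bounds~\eqref{eq:ratio:permanent:bethe:permanent:1}. Set $r := r(\vgam)$. If $\setR$ and $\setC$ are empty ($r = 0$), then $\perm(\hvgamRCp) = 1$ by the convention of Lemma~\ref{sec:1:lem:15:copy:2} and the claim is immediate, so assume $r \geq 2$. By Definition~\ref{sec:1:def:15}, $\vgamRCp$ is a doubly stochastic matrix of size $r \times r$, and every entry satisfies $0 \leq \gamRCp(i,j) < 1$: an entry equal to $1$ would make that row of $\vgamRCp$ a standard basis vector, forcing the corresponding row of $\vgam$ to be integral and contradicting $i \in \setR$. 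In particular $1 - \gamRCp(i,j) > 0$ throughout, so the denominator in~\eqref{eqn: permament of hvgamRCp} is positive. Letting $\matr{A}$ be the $r \times r$ matrix, with rows and columns indexed by $\setR$ and $\setC$, whose entries are $a(i,j) := \gamRCp(i,j)\bigl(1-\gamRCp(i,j)\bigr)$, Eq.~\eqref{eqn: permament of hvgamRCp} reads
\begin{align*}
  \perm(\hvgamRCp)
    &= \frac{\perm(\matr{A})}
            {\prod\limits_{i \in \setR,\, j \in \setC}\bigl(1-\gamRCp(i,j)\bigr)}.
\end{align*}

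The crux is the closed-form identity
\begin{align*}
  \permb(\matr{A})
    &= \prod_{i \in \setR,\, j \in \setC}\bigl(1-\gamRCp(i,j)\bigr),
\end{align*}
which I would establish by showing that the pseudomarginal $\vgam = \vgamRCp$ is the global minimizer of the Bethe free energy function associated with $\sfN(\matr{A})$ (Definition~\ref{sec:1:def:16}). Since $a(i,j) > 0$ exactly when $\gamRCp(i,j) > 0$, the matrix $\vgamRCp$ lies in $\Gamma_{r}(\matr{A})$ with no active non-negativity constraint on its support, and the stationarity (KKT) condition of the corresponding Lagrangian requires $-\log a(i,j) + \log\gamma(i,j) + \log\bigl(1-\gamma(i,j)\bigr) + 2 = \lambda_i + \mu_j$ on the support. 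Evaluated at $\gamma(i,j) = \gamRCp(i,j)$, the first three terms cancel and leave the constant $2$, so the condition holds with all row and column multipliers equal to $1$; since this Bethe free energy function is convex~\cite{Vontobel2013a}, $\vgamRCp$ is the global minimizer. Substituting $\log a(i,j) = \log\gamRCp(i,j) + \log\bigl(1-\gamRCp(i,j)\bigr)$ into the objective cancels the $\gamRCp(i,j)\log\gamRCp(i,j)$ contributions, and using $\gamRCp(i,j) + \bigl(1-\gamRCp(i,j)\bigr) = 1$ the remaining terms collapse to $-\sum_{i \in \setR, j \in \setC}\log\bigl(1-\gamRCp(i,j)\bigr)$; by~\eqref{sec:1:eqn:192} this yields the identity above.

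Combining the two displays gives $\perm(\hvgamRCp) = \perm(\matr{A}) / \permb(\matr{A})$. Here $\matr{A}$ is a non-negative $r \times r$ matrix with $\perm(\matr{A}) > 0$ (any $\sigma \in \setS_{\setR \to \setC}$ with $\prod_{i} \gamRCp(i,\sigma(i)) > 0$, which exists because $\vgamRCp$ is doubly stochastic, also satisfies $\prod_{i} a(i,\sigma(i)) > 0$), so the bounds~\eqref{eq:ratio:permanent:bethe:permanent:1} apply to $\matr{A}$ and give $1 \leq \perm(\hvgamRCp) \leq 2^{r/2}$. Finally, $r = r(\vgam) \leq n$ yields $2^{r/2} \leq 2^{n/2}$, proving~\eqref{sec:1:eqn:145}. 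I expect the main obstacle to be the identity $\permb(\matr{A}) = \prod_{i,j}\bigl(1-\gamRCp(i,j)\bigr)$: the whole argument hinges on the a~priori non-obvious fact that $\vgamRCp$ is itself the Bethe-optimal pseudomarginal for $\sfN(\matr{A})$, which collapses the optimization defining $\permb(\matr{A})$ into the telescoping product. With this in hand, the lower bound of~\eqref{sec:1:eqn:145} is precisely $\permb(\matr{A}) \leq \perm(\matr{A})$ (ultimately Schrijver's inequality), and the upper bound is $\perm(\matr{A}) \leq 2^{r/2}\,\permb(\matr{A})$ (the Anari--Rezaei inequality).
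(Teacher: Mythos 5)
Your proposal is correct and takes essentially the same route as the paper's proof: the paper establishes $\permb(\hvgamRCp) = 1$ by verifying (via convexity and the KKT conditions, with details omitted) that $\vgamRCp$ is the Bethe-free-energy minimizer for $\sfN(\hvgamRCp)$ and then applies both inequalities in~\eqref{eq:ratio:permanent:bethe:permanent:1}, while you perform the identical computation on the rescaled matrix $\matr{A}$ with entries $\gamRCp(i,j)\bigl(1-\gamRCp(i,j)\bigr)$, which is equivalent because $\matr{A}$ and $\hvgamRCp$ differ by a positive scalar and the Bethe permanent scales accordingly. If anything, you spell out the stationarity computation that the paper's proof leaves implicit, so the argument is sound.
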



\begin{proof}
  It is helpful to first prove that $\permb(\hvgamRCp) = 1$. Indeed,
  \begin{align*}
    \permb(\hvgamRCp)
      &\overset{(a)}{=} 
         \exp
           \biggl(
             - \min_{\vgam \in \Gamma_{|\setR|}(\hvgamRCp)}
                 F_{\mathrm{B},\hvgamRCp}( \vgam )
           \biggr) \\
      &\overset{(b)}{=}
         \exp\bigl( - F_{\mathrm{B},\hvgamRCp}( \vgamRCp ) \bigr) \\
      &\overset{(c)}{=} \exp(0) \\
      &= 1\, ,
  \end{align*}
  where step~$(a)$ follows from the definition of the Bethe permanent in
  Definition~\ref{sec:1:def:16}, where step~$(b)$ follows from the convexity
  of the minimization problem
  $\min_{\vgam \in \Gamma_{|\setR|}(\hvgamRCp)} F_{\mathrm{B},\hvgamRCp}(
  \vgam )$ (which was proven in~\cite{Vontobel2013a}) and from studying the
  Karush–Kuhn–Tucker (KKT) conditions for this minimization problem at $\vgam = \vgamRCp$ (the
  details are omitted), and where step~$(c)$ follows from simplifying the
  expression for $F_{\mathrm{B},\hvgamRCp}( \vgamRCp )$.

  With this, the first inequality in~\eqref{sec:1:eqn:145} follows from
  \begin{align*}
    1
      &= \permb(\hvgamRCp)
       \overset{(a)}{\leq}
         \perm(\hvgamRCp)\, ,
  \end{align*}
  where step~$(a)$ follows from the first inequality
  in~\eqref{eq:ratio:permanent:bethe:permanent:1}, and the second inequality
  in~\eqref{sec:1:eqn:145} follows from
  \begin{align*}
    \perm(\hvgamRCp) 
      &\overset{(b)}{\leq}
         \underbrace
         {
           2^{|\setR|/2}
         }_{\leq \ 2^{n/2}} 
         \cdot
         \underbrace{
           \permb(\hvgamRCp)
         }_{= \ 1}
       \leq 
         2^{n/2}\, ,
  \end{align*}
  where step~$(b)$ follows from the second inequality
  in~\eqref{eq:ratio:permanent:bethe:permanent:1}.

  An alternative proof of the first inequality in~\eqref{sec:1:eqn:145} goes
  as follows. Namely, this inequality can be obtained by 
  combining~\eqref{eqn: permament of hvgamRCp} and
  \begin{align*}
    \hspace{4cm}&\hspace{-4cm}
    \sum_{\sigma \in \setS_{\setR \to \setC}}
      \prod_{i \in \setR} 
        \gamRCp\bigl(i,\sigma(i)\bigr)
        \cdot 
        \Bigl( 1 \! - \! \gamRCp\bigl(i,\sigma(i) \bigr) \Bigr) \\
      &\overset{(a)}{\geq}
         \!\! \prod_{(i,j) \in \setR \times \setC} \!\!
           \bigl( 1 \! - \! \gamRCp(i,j) \bigr)\, ,
  \end{align*}
  where step~$(a)$ follows from~\cite[Corollary~1c of
  Theorem~1]{Schrijver1998}. Note that this alternative proof for the first
  inequality in~\eqref{sec:1:eqn:145} is not really that different from the
  above proof because the first inequality
  in~\eqref{eq:ratio:permanent:bethe:permanent:1} is also based
  on~\cite[Corollary~1c of Theorem~1]{Schrijver1998}
\end{proof}


\noindent
\textbf{Proof of the inequalities in~\eqref{sec:1:eqn:58} in Theorem~\ref{thm:
    inequalities for the coefficients}}.
\begin{itemize}

\item We prove the first inequality in~\eqref{sec:1:eqn:58} by induction on
  $M$. The base case is $M = 1$. From Definition~\ref{sec:1:lem:43}, it
  follows that $\CM{n}(\vgam) = \CBM{n}(\vgam) = 1$ for
  $\vgam \in \Gamma_{M,n}$ and so
  \begin{align*}
    \frac{\CM{n}(\vgam)}
         {\CBM{n}(\vgam)}
      &\geq 1\, ,
         \quad \vgam \in \Gamma_{M,n}\, ,
  \end{align*}
  is indeed satisfied for $M = 1$.

  Consider now an arbitrary $M \in \sZ_{\geq 2}$. By the induction hypothesis,
  we have
  \begin{align}
    \frac{ \Cgen{M-1}{n}(\vgam) }{\CBgen{M-1}{n}(\vgam) }
      &\geq 
        1\, , \qquad \vgam \in \Gamma_{M-1,n}\, .
    \label{eqn: ineq. used in induction hypothesis w.r.t. cbm}
  \end{align}
  Then we obtain
  \begin{align*}
    \CM{n}( \vgam )
    &\overset{(a)}{=} \sum_{\sigma_1 \in \setS_{[n]}( \vgam )}
         \Cgen{M-1}{n}\bigl( 
            \vgam_{\sigma_1}
    \bigr) 
    \nonumber \\
    &\overset{(b)}{\geq}
    \sum_{\sigma_1 \in \setS_{[n]}( \vgam )}
         \CBgen{M-1}{n}\bigl( 
            \vgam_{\sigma_1}
    \bigr)
    \nonumber \\ 
    &\overset{(c)}{=} \perm(\hvgamRCp) \cdot \CBM{n}(\vgam)
    \nonumber \\
    &\overset{(d)}{\geq} \CBM{n}(\vgam)\, ,
\end{align*}
where step~$(a)$ follows from Lemma~\ref{sec:1:lem:29:copy:2}, where step
$(b)$ follows from the induction hypothesis in~\eqref{eqn: ineq. used in
  induction hypothesis w.r.t. cbm}, where step~$(c)$ follows from
Lemma~\ref{sec:1:lem:15:copy:2}, and where step~$(d)$ follows from
Lemma~\ref{sec:1:lem:17}. 

\item A similar line of reasoning can be used to prove the second inequality
  in~\eqref{sec:1:eqn:58}; the details are omitted.

\end{itemize}
\etheoremproof


\medskip

\noindent
\textbf{Proof of the inequalities in~\eqref{sec:1:eqn:147} in
  Theorem~\ref{th:main:permanent:inequalities:1}}.
\begin{itemize}

\item We prove the first inequality in~\eqref{sec:1:eqn:147} by observing that
  \begin{align*}
    \bigl( \perm(\mtheta) \bigr)^{\!M}
      &\overset{(a)}{=} 
         \sum_{\vgam \in \GamMnthe} 
           \mtheta^{ M \cdot \vgam }
           \cdot
           \CM{n}( \vgam )
             \nonumber \\
      &\overset{(b)}{\geq}
         \sum_{\vgam \in \GamMnthe} 
           \mtheta^{ M \cdot \vgam }
           \cdot
           \CBM{n}( \vgam )
             \nonumber \\
      &\overset{(c)}{=}
         \bigl( \permbM{M}(\mtheta) \bigr)^{\!M}\, ,
  \end{align*}
  where step~$(a)$ follows from~\eqref{sec:1:eqn:43}, where step~$(b)$ follows
  from the first inequality in~\eqref{sec:1:eqn:58}, and where step~$(c)$
  follows from~\eqref{sec:1:eqn:190}. The desired inequality is then obtained
  by taking the $M$-th root on both sides of the expression.

\item A similar line of reasoning can be used to prove the second inequality
  in~\eqref{sec:1:eqn:147}; the details are omitted.

\end{itemize}
\etheoremproof


\medskip

The expression in the following lemma can be used to obtain a slightly
alternative proof of the inequalities in~\eqref{sec:1:eqn:147} in
Theorem~\ref{th:main:permanent:inequalities:1}. However, this result is also
of interest in itself.


\begin{lemma}
  \label{lemma:ratio:perm:permbM:1}

  Let $\pmtheta$ be the probability mass function on $\setS_{[n]}$ induced
  by~$\mtheta$ (see Definition~\ref{sec:1:def:10}). It holds that
  \begin{align}
    &
    \frac{\perm(\mtheta)}
         {\permbM{M}(\mtheta)} \nonumber \\
      &= \left( \!
           \sum_{ \vsigma_{[M]} \in \setS_{[n]}^{M} }
             \left(
               \prod_{m \in [M]} \!\!
                 \pmtheta(\sigma_m)
             \right)
             \! \cdot \!
             \frac{\CBM{n}
                     \Bigl(
                       \bigl\langle \mP_{\sigma_{m}} \bigr\rangle_{m \in [M]}
                     \Bigr)}
                  {\CM{n}
                     \Bigl(
                       \bigl\langle \mP_{\sigma_{m}} \bigr\rangle_{m \in [M]}
                     \Bigr)}
         \right)^{\!\!\! -1/M} \!\! .
           \label{eq:ratio:perm:permbM:1}
  \end{align}
  Note that from the first inequality in~\eqref{sec:1:eqn:58}
    in Theorem~\ref{thm: inequalities for the coefficients} it follows that
    $ \CM{n}(\vgam) \geq \CBM{n}(\vgam) > 0 $ for all $ \vgam \in \GamMn $,
    \ie, the ratio appearing in~\eqref{eq:ratio:perm:permbM:1} is well
    defined.
\end{lemma}


\begin{proof}
  See Appendix~\ref{app:proof:lemma:ratio:perm:permbM:1}.
\end{proof}

Based on Lemma~\ref{lemma:ratio:perm:permbM:1}, we  
give an alternative proof of the inequalities in~\eqref{sec:1:eqn:147} in
  Theorem~\ref{th:main:permanent:inequalities:1} in Appendix~\ref{apx: alternative proof of perm geq permb}.

Note that the first inequality in~\eqref{sec:1:eqn:147} is tight. Indeed,
choosing the matrix $\mtheta$ to be diagonal with strictly positive diagonal
entries shows that $\perm(\mtheta) / \permbM{M}(\mtheta) = 1$ for any
$M \in \sZpp$. Furthermore,
$\perm(\mtheta) / \permb(\mtheta) = \liminf_{M \to \infty} \perm(\mtheta) /
\permbM{M}(\mtheta)= 1$.


On the other hand, the second inequality in~\eqref{sec:1:eqn:147} is
\emph{not} tight for finite $M \geq 2$. Indeed, for this inequality to be
tight, all the inequalities
in~\eqref{eq:ratio:perm:permbM:proof:upper:bound:1} in Appendix~\ref{app:proof:lemma:ratio:perm:permbM:1} need to be tight. In
particular, the inequality in step~$(a)$ needs to be tight, which means that
for all $\vsigma_{[M]} \in \setS_{[n]}^{M}$ with
$\prod_{m \in [M]} \pmtheta(\sigma_m) > 0$, it needs to hold that
$\CBM{n}(\ldots) / \CM{n}(\ldots) = (2^{n/2})^{-(M-1)}$. However, this is not
the case for $\vsigma_{[M]} = (\sigma, \ldots, \sigma)$ with
$\sigma \in \setS_{[n]}(\vtheta)$, for which it holds that
$\CBM{n}(\ldots) / \CM{n}(\ldots) = 1$.


Observe that the non-tightness of the second inequality
in~\eqref{sec:1:eqn:147} for finite $M \geq 2$ is in contrast to the case
$M \to \infty$, where it is known that for
$\mtheta = \matr{I}_{(n/2) \times (n/2)} \otimes \bigl( \begin{smallmatrix} 1
  & 1 \\ 1 & 1 \end{smallmatrix} \bigr)$ it holds that
$\perm(\mtheta) / \permb(\mtheta) = \liminf_{M \to \infty} \perm(\mtheta) /
\permbM{M}(\mtheta)= 2^{n/2}$. (Here, $n$ is assumed to be even and
$\matr{I}_{(n/2) \times (n/2)}$ is defined to be the identity matrix of size
$(n/2) \times (n/2)$.)


\vspace{0.2 cm}

\noindent
We also give another alternative proof of the inequalities in~\eqref{sec:1:eqn:147} in
  Theorem~\ref{th:main:permanent:inequalities:1} in Appendix~\ref{apx: alternative prooof of perm geq permb: 2}.
In particular, Lemma~\ref{prop: a special case of Navin conjecture} proven in Appendix~\ref{apx: alternative prooof of perm geq permb: 2} proves the $M_{1}= 1$ case of the following conjecture.
\begin{conjecture}\label{Navin conjecture}
  For any integers $ M_{1} \in \sZpp $ and $ M_{2} \in \sZpp $, it holds that
  \begin{align*}
    \frac{ \bigl( \permbM{M_{1}}(\mtheta) \bigr)^{\! M_{1}} \cdot
    \bigl( \permbM{M_{2}}(\mtheta) \bigr)^{\! M_{2}}
    }{ \bigl( \permbM{M_{1} + M_{2}}(\mtheta) \bigr)^{\! M_{1} + M_{2}}}
    \geq 1\, .
  \end{align*}
  \econjecture
\end{conjecture}
A useful consequence of Conjecture~\ref{Navin conjecture} is that
\begin{align*}
  \lim_{M \to \infty} 
  \permbM{M}(\mtheta) 
  = \limsup_{M \to \infty} 
  \permbM{M}(\mtheta) 
  =
  \permb(\mtheta)\, ,
\end{align*}
where the first equality follows from Conjecture~\ref{Navin conjecture} and Fekete’s lemma~\cite{Fekete1923} (see, \eg,~\cite[Lemma 11.6]{Lint2001}), and where the second equality follows from~\cite[Theorem 39]{Vontobel2013a}.


\subsection{\texorpdfstring{Bounding the ratio $\perm(\mtheta) /  
                                                                                     \permscsM{M}(\mtheta)$}{}}


In this subsection, we first prove the inequalities in~\eqref{sec:1:eqn:180}
in Theorem~\ref{thm: inequalities for the coefficients} that give bounds on
the ratio $\CM{n}(\vgam) / \CscSgen{M}{n}(\vgam)$. Then, based on these
bounds, we prove the inequalities in~\eqref{sec:1:eqn:200} in
Theorem~\ref{th:main:permanent:inequalities:1} that give bounds on the ratio
$\perm(\mtheta) / \permscsM{M}(\mtheta)$.


\begin{lemma}
  \label{sec:1:lem:36}

  Consider the setup in Lemma~\ref{sec:1:lem:30}. The matrix $\vgam$ satisfies
  \begin{align*}
    \frac{n!}{n^{n}}
      &\leq 
         \perm(\vgam)
       \leq 1\, .
  \end{align*}
\end{lemma}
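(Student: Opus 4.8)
The plan is to prove the two bounds in the lemma statement independently, since they come from entirely different sources. Throughout, the only property of $\vgam$ that I will use is that, in the setup of Lemma~\ref{sec:1:lem:30}, the matrix $\vgam$ lies in $\GamMn \subseteq \Gamma_{n}$, so $\vgam$ is a doubly stochastic matrix of size $n \times n$; the fact that its entries are multiples of $1/M$ plays no role here.

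For the lower bound $\perm(\vgam) \geq n!/n^{n}$, I would simply invoke van der Waerden's inequality, proven by Egorychev~\cite{Egorychev1981} and Falikman~\cite{Falikman1981}, which asserts that the permanent of any $n \times n$ doubly stochastic matrix is at least $n!/n^{n}$ (with equality exactly for the matrix all of whose entries equal $1/n$). This is precisely the inequality already flagged in the footnote attached to Lemma~\ref{sec:1:lem:30}, so no additional argument is required.

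For the upper bound $\perm(\vgam) \leq 1$, I would give an elementary argument that dominates the permanent by the product of the row sums. Writing the permanent as a sum over bijections and comparing it with the full product over all index tuples,
\[
  \perm(\vgam)
    = \sum_{\sigma \in \setS_{[n]}} \prod_{i} \gamma\bigl( i, \sigma(i) \bigr)
    \leq \prod_{i} \left( \sum_{j} \gamma(i,j) \right),
\]
where the inequality holds because expanding the right-hand side yields all the left-hand summands together with extra terms indexed by the non-injective maps $[n] \to [n]$, all of which are non-negative since $\vgam$ has non-negative entries. As $\vgam$ is doubly stochastic, every row sum $\sum_{j} \gamma(i,j)$ equals $1$, so the right-hand side equals $1$, giving $\perm(\vgam) \leq 1$. (Note that only the unit row sums are needed for this direction; the column constraints are not used.)

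I do not expect a genuine obstacle in either direction: the lower bound is a direct citation of a classical (if deep) result, and the upper bound rests only on non-negativity of the entries together with the row-sum normalization. The one point worth stating carefully is the term-by-term domination of $\perm(\vgam)$ by $\prod_{i}\bigl(\sum_{j}\gamma(i,j)\bigr)$, which is what makes the upper bound valid for every non-negative matrix with unit row sums.
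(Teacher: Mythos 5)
Your proposal is correct and follows essentially the same route as the paper's proof: the lower bound is cited from van der Waerden's inequality (Egorychev/Falikman), and the upper bound is obtained by observing that $\perm(\vgam)$ is a sub-sum of the non-negative terms in $\prod_{i} \bigl( \sum_{j} \gamma(i,j) \bigr) = 1$. The only (harmless) difference is that you spell out the term-by-term domination in slightly more detail than the paper does.
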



\begin{proof}
  The lower bound follows from van der Waerden's inequality proven
  in~\cite[Theorem 1]{Egorychev1981} and~\cite[Theorem 1]{Falikman1981}. The
  upper bound follows from
  \begin{align*}
    \perm(\vgam) 
      &\overset{(a)}{\leq }
         \prod_{i} 
           \Biggl( \sum_{j} \gamma(i,j) \Biggr)
       = 
         \prod_{i} 
           1
       = 1\, ,
  \end{align*}
  where step~$(a)$ follows from the observation that $\perm(\vgam)$ contains a
  subset of the (non-negative) terms in
  $\prod_{i} \bigl( \sum_{j} \gamma(i,j) \bigr)$.
\end{proof}


\noindent
\textbf{Proof of the inequalities in~\eqref{sec:1:eqn:180} in Theorem~\ref{thm:
    inequalities for the coefficients}}.
\begin{itemize}

\item We prove the first inequality in~\eqref{sec:1:eqn:180} by induction on
  $M$. The base case is $M = 1$. From Definition~\ref{sec:1:lem:43}, it
  follows that $\CM{n}(\vgam) = \CscSgen{M}{n}(\vgam) = 1$ for all
  $\vgam \in \Gamma_{M,n}$ and so
  \begin{align*}
    \frac{\CM{n}(\vgam)}
         {\CscSgen{M}{n}(\vgam)}
      &\geq
         1\, , 
           \qquad \vgam \in \Gamma_{M,n}\, .
  \end{align*}
  (Note that when $M = 1$, then the first inequality in~\eqref{sec:1:eqn:180}
  simplifies to $1$.)

  Consider now an arbitrary $ M \in \sZ_{\geq 2} $. By the induction
  hypothesis, we have
  \begin{align}
    \frac{\Cgen{M-1}{n}(\vgam)}
         {\CscSgen{M-1}{n}(\vgam)} 
      &\! \geq \!
         \Biggl( \!
           \frac{(M \! - \! 1)^{M-1}}
                {(M \! - \! 1)!} \!
         \Biggr)^{\!\!\! n} \!\!
         \cdot \!
         \biggl(
           \frac{ n! }
                { n^{n} }
         \biggr)^{\!\! M-2} \!\!\!\! ,
           \, \vgam \in \Gamma_{M-1,n}\, .
    \label{eqn: ineq. used in induction hypothesis w.r.t. cscsm}
  \end{align}
  Then we obtain
  \begin{align*}
    &\CM{n}( \vgam ) \nonumber \\
    &\overset{(a)}{=}
       \sum_{\sigma_1 \in \setS_{[n]}( \vgam )}
         \Cgen{M-1}{n}
           \bigl( 
             \vgam_{\sigma_1}
           \bigr) 
             \nonumber \\
    &\overset{(b)}{\geq}
       \biggl( \frac{(M \! - \! 1)^{M-1}}{(M \! - \! 1)!} \biggr)^{\!\! n}
       \cdot
       \biggl(
         \frac{ n! }{ n^{n} }
       \biggr)^{\!\! M-2} 
       \!\!\!\!\!\!
       \cdot
       \sum_{\sigma_1 \in \setS_{[n]}( \vgam )} \!\!
         \CscSgen{M-1}{n}(\vgam_{\sigma_1})
           \nonumber \\ 
    &\overset{(c)}{=} 
       \biggl( \frac{M^{M}}{M!} \biggr)^{\!\! n}
       \cdot
       \biggl(
         \frac{ n! }{ n^{n} }
       \biggr)^{\!\! M-2} 
       \cdot
       \perm(\vgam)
       \cdot
       \CscSgen{M}{n}(\vgam)
         \nonumber \\
    &\overset{(d)}{\geq}
       \biggl( \frac{M^{M}}{M!} \biggr)^{\!\! n}
       \cdot
       \biggl(
         \frac{ n! }{ n^{n} }
       \biggr)^{\!\! M-1} 
       \cdot 
       \CscSgen{M}{n}(\vgam)\, ,
  \end{align*}
  where step~$(a)$ follows from Lemma~\ref{sec:1:lem:29:copy:2}, where step
  $(b)$ follows from the induction hypothesis 
  in~\eqref{eqn: ineq. used in induction hypothesis w.r.t. cscsm}, 
  where step~$(c)$ follows from
  Lemma~\ref{sec:1:lem:30}, and where step~$(d)$ follows from
  Lemma~\ref{sec:1:lem:36}.

\item A similar line of reasoning can be used to prove the second inequality
  in~\eqref{sec:1:eqn:180}; the details are omitted.

\end{itemize}
\etheoremproof


\medskip

\noindent
\textbf{Proof of the inequalities in~\eqref{sec:1:eqn:200} in
  Theorem~\ref{th:main:permanent:inequalities:1}}.
\begin{itemize}

\item We prove the first inequality in~\eqref{sec:1:eqn:200} by observing that
  \begin{align*}
    \hspace{0.25cm}&\hspace{-0.25cm}
    \bigl( \perm(\mtheta) \bigr)^{\!M} \\
      &\overset{(a)}{=}
         \sum_{\vgam \in \GamMnthe} 
           \mtheta^{ M \cdot \vgam }
         \cdot
         \CM{n}( \vgam )
           \nonumber \\
      &\overset{(b)}{\geq} 
         \biggl( \frac{M^{M}}{M!} \biggr)^{\!\! n}
         \cdot
         \biggl(
           \frac{ n! }{ n^{n} }
         \biggr)^{\!\! M-1} 
         \! \cdot \!\!\!\!\!
         \sum_{\vgam \in \GamMnthe} 
           \mtheta^{ M \cdot \vgam }
         \cdot
         \CscSgen{M}{n}( \vgam )
          \nonumber \\
      &\overset{(c)}{=} 
         \biggl( \frac{M^{M}}{M!} \biggl)^{\!\! n}
         \cdot
         \biggl(
           \frac{ n! }{ n^{n} }
         \biggl)^{\!\! M-1} 
         \cdot 
         \bigl( \permscsM{M}(\mtheta) \bigr)^{\!M}\, ,
  \end{align*}
  where step~$(a)$ follows from~\eqref{sec:1:eqn:43}, where step~$(b)$ follows
  from the first inequality in~\eqref{sec:1:eqn:180}, and where step~$(c)$
  follows from~\eqref{sec:1:eqn:168}. The desired inequality is then obtained
  by taking the $M$-th root on both sides of the expression.

\item A similar line of reasoning can be used to prove the second inequality
  in~\eqref{sec:1:eqn:200}; the details are omitted.

\etheoremproof
\end{itemize}


The expression in the following lemma can be used to obtain a slightly
alternative proof of the inequalities in~\eqref{sec:1:eqn:200} in
Theorem~\ref{th:main:permanent:inequalities:1}. However, this result is also
of interest in itself.


\begin{lemma}
  \label{lemma:ratio:perm:permscM:1}

  Let $\pmtheta$ be the probability mass function on $\setS_{[n]}$ induced
  by~$\mtheta$ (see Definition~\ref{sec:1:def:10}). It holds that
  \begin{align*}
    &
    \frac{\perm(\mtheta)}
         {\permscsM{M}(\mtheta)} \nonumber \\
      &= \left( \!
           \sum_{ \vsigma_{[M]} \in \setS_{[n]}^{M} } \!
             \Biggl(
               \prod_{m \in [M]} \!\!\!
                 \pmtheta(\sigma_m)
             \Biggr)
             \!\! \cdot \!
             \frac{\CscSgen{M}{n}
                     \Bigl(
                       \bigl\langle \mP_{\sigma_{m}} \bigr\rangle_{m \in [M]}
                     \Bigr)}
                  {\CM{n}
                     \Bigl(
                       \bigl\langle \mP_{\sigma_{m}} \bigr\rangle_{m \in [M]}
                     \Bigr)} \!
         \right)^{\!\!\! -1/M} \!\! .
  \end{align*}
\end{lemma}


\begin{proof}
  The proof of this statement is analogous to the proof of
  Lemma~\ref{lemma:ratio:perm:permbM:1} in
  Appendix~\ref{app:proof:lemma:ratio:perm:permbM:1}; one simply has to
  replace $\permbM{M}(\mtheta)$ and $\CBM{n}(\ldots)$ by
  $\permscsM{M}(\mtheta)$ and $\CscSgen{M}{n}(\ldots)$, respectively.
\end{proof}


\noindent
\textbf{Alternative proof of the inequalities in~\eqref{sec:1:eqn:200} in
  Theorem~\ref{th:main:permanent:inequalities:1}}.

\noindent
Based on Lemma~\ref{lemma:ratio:perm:permscM:1}, an alternative proof of the
inequalities in~\eqref{sec:1:eqn:200} can be given that is very similar to the
alternative proof presented
in Appendix~\ref{apx: alternative proof of perm geq permb}; 
the details are omitted.
\etheoremproof


\medskip

Note that the second inequality in~\eqref{sec:1:eqn:200} is tight. Indeed,
choosing the matrix $\mtheta$ to be diagonal with strictly positive diagonal
entries shows that
$\perm(\mtheta) / \permscsM{M}(\mtheta) = \frac{M^{n}}{(M!)^{n/M}}$ for any
$M \in \sZpp$. Furthermore,
$\perm(\mtheta) / \permscs(\mtheta) = \liminf_{M \to \infty} \perm(\mtheta) /
\permscsM{M}(\mtheta)= \e^n$.


On the other hand, the first inequality in~\eqref{sec:1:eqn:200} is \emph{not}
tight for finite $M \geq 2$. (The proof is similar to the proof of the
non-tightness of the second inequality in~\eqref{sec:1:eqn:147} for finite
$M \geq 2$.) This is in contrast to the case $M \to \infty$, where it is known
that for $\mtheta$ being equal to the all-one matrix of size $n \times n$ it
holds that
$\perm(\mtheta) / \permscs(\mtheta) = \liminf_{M \to \infty} \perm(\mtheta) /
\permscsM{M}(\mtheta)= \e^n \cdot \frac{n!}{n^n}$.


\ifx\withclearpagecommands\x
\clearpage
\fi

\section{Asymptotic Expressions for the 
              Coefficients \texorpdfstring{$\CM{n}( \vgam )$, 
                                   $\CBM{n}( \vgam )$, $\CscSgen{M}{n}( \vgam )$}{}}
\label{sec:asymptotic:1}


In this section, we prove
Proposition~\ref{prop:coefficient:asymptotitic:characterization:1}. Whereas
Definitions~\ref{sec:1:def:18:part:2}, \ref{sec:1:def:16},
and~\ref{sec:1:def:20} give an analytical definition of the entropy functions
$\HG'$, $ \HBthe $, and $ \HscSthe $, respectively, the expressions in
Eqs.~\eqref{sec:1:eqn:125}--\eqref{sec:1:eqn:207} in
Proposition~\ref{prop:coefficient:asymptotitic:characterization:1} give a
combinatorial characterization of these entropy functions. The proof of
Proposition~\ref{prop:coefficient:asymptotitic:characterization:1} is based on
standard results of the method of types and considerations that are similar to
the considerations in~\cite[Sections~IV and~VII]{Vontobel2013}.



\begin{definition}
  \label{sec:1:def:17}

  Fix some $ M \in \sZpp $ and $ \vgam \in \Gamma_{n} $. 
  Recall the definition of the notations
  $\vsigma_{[M]}$ and $\mP_{\vsigma_{[M]}}$ in
  Definition~\ref{sec:1:def:10}. We define the following objects.
  \begin{enumerate}

  \item Let $\mP_{\vsigma_{[M]}} \in \setA(\vgam)^{M}$. The type of
    $ \mP_{\vsigma_{[M]}} $ is defined to be
    \begin{align*}
      \bm{t}_{\mP_{\vsigma_{[M]}}}
        &\defeq 
           \bigl(
             t_{ \mP_{\vsigma_{[M]}} }( \mP_{\sigma} ) 
          \bigr)_{ \! \mP_{\sigma} \in \setA(\vgam) }
           \in \PiA{\vgam}\, ,
    \end{align*}
    where
    \begin{align*}
      t_{\mP_{\vsigma_{[M]}}}( \mP_{\sigma} )
        &\defeq
           \frac{1}{M} 
           \cdot 
           \Bigl| 
             \bigl\{ 
               m \in [M] 
             \bigm|
               \mP_{\sigma_{m}} = \mP_{\sigma} 
             \bigr\}
           \Bigr|\, .
    \end{align*}

  \item Let $ \set{B}_{\setA(\vgam)^{M}} $ be the set of all the possible types
   that are based on the elements in $\setA(\vgam)^{M}$:
    \begin{align*}
      \set{B}_{\setA(\vgam)^{M}}
        &\defeq 
           \left\{ 
             \bm{t} 
               \defeq 
                 \Bigl( t( \mP_{\sigma} ) \Bigr)_{ \!\! \mP_{\sigma} \in \setA(\vgam) }
           \ \middle|
             \begin{array}{c}
               \exists \mP_{\vsigma_{[M]}} \in \setA(\vgam)^{M} \\
               \text{s.t.} \ \bm{t}_{\mP_{\vsigma_{[M]}}} = \bm{t}
             \end{array}
           \right\}\, .
    \end{align*}

  \item Let $ \bm{t} \in \set{B}_{\setA(\vgam)^{M}} $. The type class of
    $ \bm{t} $ is defined to be
    \begin{align*}
      \set{T}_{\bm{t}}
        &\defeq 
           \bigl\{ 
             \mP_{\vsigma_{[M]}} \in \setA(\vgam)^{M}
           \bigm|
             \bm{t}_{\mP_{\vsigma_{[M]}}} = \bm{t}
           \bigr\}\, .
    \end{align*}

  \end{enumerate}
  \edefinition
\end{definition}


\begin{lemma}
  \label{sec:1:lem:21}

  Consider $ M \in \sZpp $ and $ \vgam \in \Gamma_{n} $. It holds that
  \begin{align*}
    \bigl| \set{B}_{\setA(\vgam)^{M}} \bigr|
      &= \binom{M + |\setA(\vgam)| - 1}{|\setA(\vgam)| - 1} 
           \in O\bigl( M^{|\setA(\vgam)|-1} \bigr)\, , \nonumber \\
    |\set{T}_{\bm{t}}| 
      &= \frac{ M! }
              { \prod\limits_{ \mP_{\sigma} \in \setA(\vgam) } 
                  \bigl( M \cdot t(\mP_{\sigma}) \bigr)! }\, ,
           \quad \bm{t} \in \set{B}_{\setA(\vgam)^{M}}\, .
  \end{align*}
\end{lemma}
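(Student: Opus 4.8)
The plan is to prove both equalities by the standard counting arguments underlying the method of types. Throughout, I would write $A \defeq |\setA(\mtheta)|$ and recall from Definition~\ref{sec:1:def:17} that a type $\bm{t} = \bigl( t(\mP_{\sigma}) \bigr)_{\mP_{\sigma} \in \setA(\mtheta)}$ arising from some $\mP_{\vsigma_{[M]}} \in \setA(\mtheta)^{M}$ has entries of the form $t(\mP_{\sigma}) = k_{\sigma} / M$, where $k_{\sigma} \defeq M \cdot t(\mP_{\sigma}) \in \sZp$ records how many of the $M$ indices $m \in [M]$ satisfy $\mP_{\sigma_{m}} = \mP_{\sigma}$. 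The first observation I would establish is that this sets up a bijection between $\set{B}_{\setA(\mtheta)^{M}}$ and the set of tuples of non-negative integers $(k_{\sigma})_{\mP_{\sigma} \in \setA(\mtheta)}$ with $\sum_{\mP_{\sigma} \in \setA(\mtheta)} k_{\sigma} = M$: every such tuple is realized by some $\mP_{\vsigma_{[M]}}$ (for instance, the sequence listing each $\mP_{\sigma}$ exactly $k_{\sigma}$ times), and distinct tuples give distinct types. Thus counting types amounts to counting compositions of $M$ into $A$ non-negative integer parts indexed by $\setA(\mtheta)$.

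For the first claim, I would count these compositions by the classical stars-and-bars bijection: the number of solutions of $k_{1} + \cdots + k_{A} = M$ in non-negative integers equals $\binom{M + A - 1}{A - 1}$. To obtain the asymptotic estimate, I would note that for fixed $A$ this quantity is a polynomial in $M$ of degree $A - 1$, since $\binom{M + A - 1}{A - 1} = \frac{1}{(A-1)!} \prod_{\ell=1}^{A-1} (M + \ell)$, and hence it lies in $O\bigl( M^{A-1} \bigr)$.

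For the second claim, I would fix a type $\bm{t} \in \set{B}_{\setA(\mtheta)^{M}}$ with associated frequencies $k_{\sigma} = M \cdot t(\mP_{\sigma})$. By the definition of the type class, $\set{T}_{\bm{t}}$ consists of exactly those sequences $\mP_{\vsigma_{[M]}} = (\mP_{\sigma_{m}})_{m \in [M]}$ in which, for each $\mP_{\sigma} \in \setA(\mtheta)$, precisely $k_{\sigma}$ of the $M$ positions carry the value $\mP_{\sigma}$. Counting these by choosing which positions receive each value yields the multinomial coefficient $\frac{M!}{\prod_{\mP_{\sigma} \in \setA(\mtheta)} k_{\sigma}!} = \frac{M!}{\prod_{\mP_{\sigma} \in \setA(\mtheta)} \bigl( M \cdot t(\mP_{\sigma}) \bigr)!}$, which is the desired expression.

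I do not anticipate a substantial obstacle: the argument is routine combinatorics. The only points demanding care are bookkeeping ones, namely confirming that the map sending $\mP_{\vsigma_{[M]}}$ to its frequency vector is genuinely onto the set of compositions (so that the stars-and-bars count does not overcount by including compositions that fail to be realizable types) and verifying the polynomial degree in $M$ needed for the $O\bigl( M^{A-1} \bigr)$ estimate.
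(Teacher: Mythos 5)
Your proposal is correct and matches the paper's intent exactly: the paper's own proof simply states that these are standard combinatorial results and omits the details, and your stars-and-bars argument for the number of types together with the multinomial count for the type-class size is precisely that standard derivation (including the check that every composition is realizable, so no overcounting occurs).
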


\begin{proof}
  These are standard combinatorial results and so the details of the
  derivation are omitted.
\end{proof}


\noindent
\textbf{Proof sketch of
  Proposition~\ref{prop:coefficient:asymptotitic:characterization:1}}.
\begin{itemize}

\item The expression in~\eqref{sec:1:eqn:125} can be proven as
  follows. Namely, let $\vgam \in \GamMn$. For a large integer $M$, we obtain
  \begin{align*}
    \CM{n}( \vgam )
      &\overset{(a)}{=}
         \sum_{ \vsigma_{[M]} \in \setS_{[n]}^{M} }
           \Bigl[ 
             \vgam = \bigl\langle \mP_{\sigma_{m}} \bigr\rangle_{m \in [M]}
           \Bigr] \\
      &\overset{(b)}{=}
         \sum_{ \bm{t} \in \set{B}_{\setA(\vgam)^{M}} }
           \underbrace
           {
             \sum_{ \mP_{\vsigma_{[M]}} \in \set{T}_{\bm{t}}}
               \Bigl[ 
                 \vgam = \bigl\langle \mP_{\sigma_{m}} \bigr\rangle_{m \in [M]}
               \Bigr]
           }_{\defeq \ (*)} \\
      &\overset{(c)}{=}
         \sum_{ \bm{t} \in \set{B}_{\setA(\vgam)^{M}} \cap \PiA{\vgam}( \vgam )}
           |\set{T}_{\bm{t}}| \\
      &\overset{(d)}{=}
         \sum_{ \bm{t} \in \set{B}_{\setA(\vgam)^{M}} \cap \PiA{\vgam}( \vgam )}
           \exp\bigl( M \cdot \HG( \bm{t} ) + o( M ) \bigr) \\
      &\overset{(e)}{=}
         \max_{ \bm{t} \in \set{B}_{\setA(\vgam)^{M}} \cap \PiA{\vgam}( \vgam )}
           \exp\bigl( M \cdot \HG( \bm{t} ) + o( M ) \bigr) \, ,
  \end{align*}
  where step~$(a)$ follows from the definition of $\CM{n}( \vgam )$
  in~\eqref{sec:1:eqn:56}, where step~$(b)$ follows from the definition of
  $\set{B}_{\setA(\vgam)^{M}}$ and $\set{T}_{\bm{t}}$, where
  step~$(c)$ follows from the observation that $(*)$ equals
  $|\set{T}_{\bm{t}}|$ if 
  $\bm{t} \in \set{B}_{\setA(\vgam)^{M}} \cap \PiA{\vgam}(\vgam)$ and equals zero
  if $\bm{t} \in \set{B}_{\setA(\vgam)^{M}} \setminus \PiA{\vgam}(\vgam)$,
  where step~$(d)$ follows from 
  Stirling's approximation of the factorial function (see, \eg,~\cite{Robbins1955}), 
  Lemma~\ref{sec:1:lem:21}, and the definition of
  $\HG$ in Definition~\ref{sec:1:def:18}, and where step~$(e)$ follows from
  the fact that the size of the set $\set{B}_{\setA(\vgam)^{M}}$ is polynomial
  in $M$ (see Lemma~\ref{sec:1:lem:21}) and therefore can be absorbed in the
  $o(M)$ term in the exponent.
  
  In the limit $M \to \infty$, the set $\set{B}_{\setA(\vgam)^{M}}$ is dense
  in $\PiA{\vgam}( \vgam )$, and so
  \begin{align*}
    \CM{n}( \vgam )
      &= \max_{ \bm{t} \in \PiA{\vgam}( \vgam )}
           \exp\bigl( M \cdot \HG( \bm{t} ) + o( M ) \bigr) \\
      &\overset{(a)}{=}
         \exp\bigl( M \cdot \HG'( \vgam ) + o( M ) \bigr)\, ,
  \end{align*}
  where step~$(a)$ used the definition of $\HG'$ in
  Definition~\ref{sec:1:def:18:part:2}.

\item For the proof of the expression in~\eqref{sec:1:eqn:206},
  see~\cite[Section~VII]{Vontobel2013}.

\item A similar line of reasoning as above can be used to prove the expression
  in~\eqref{sec:1:eqn:207}; the details are omitted.

  \epropositionproof

\end{itemize}


\ifx\withclearpagecommands\x
\clearpage
\fi

\section{Results for the Permanent and the 
               \texorpdfstring{\\ Degree-$2$ Bethe Permanent}{}}
\label{apx:22}


The upcoming proposition, Proposition~\ref{prop:ratio:perm:permbM:2:1}, is the
same as~\cite[Proposition~2]{KitShing2022}. The purpose of this section is
to rederive this proposition based on some of the results that have been
established so far in this paper.


\begin{proposition}
  \label{prop:ratio:perm:permbM:2:1}
  
  Let $\pmtheta$ be the probability mass function on $\setS_{[n]}$ induced
  by~$\mtheta$ (see Definition~\ref{sec:1:def:10}). It holds that
  \begin{align}
    \frac{ \perm(\mtheta) }{ \permbM{2}(\mtheta) }
      &= \Biggl(
            \sum\limits_{ \sigma_1,\sigma_{2} \in \setS_{[n]} } 
              \pmtheta( \sigma_1 )
              \cdot
              \pmtheta( \sigma_{2} ) 
              \cdot
              2^{-c(\sigma_1,\sigma_2)}
         \Biggr)^{\!\!\! -1/2}\, ,
           \label{eq:ratio:perm:permbM:2:result:1}
  \end{align}
  where $c(\sigma_1,\sigma_2)$ is the number of cylces of length larger than
  one in the cycle notation
  expression\footnote{\label{footnote:cycle:notation:example:1}%
    Let us recall the cycle notation for permutations with the help of an
    example. Namely, consider the permutation $\sigma: [7] \to [7]$ defined by
    $\sigma(1) = 1$, $\sigma(2) = 2$, $\sigma(3) = 4$, $\sigma(4) = 3$,
    $\sigma(5) = 6$, $\sigma(6) = 7$, $\sigma(7) = 5$. In terms of the cycle
    notation, $\sigma = (1)(2)(34)(567)$. As can be seen from this expression,
    the permutation $\sigma$ contains two cycles of length $1$ (one cycle
    involving only~$1$ and one cycle involving only~$2$), one cycle of length
    $2$ (involving $3$ and $4$), and one cycle of length $3$ (involving $5$,
    $6$, and $7$).} of the permutation ${\sigma_1 \circ \sigma_2^{-1}}$. Here,
  $\sigma_1 \circ \sigma_2^{-1}$ represents the permutation that is obtained
  by first applying the inverse of the permutation $\sigma_2$ and then the
  permutation $\sigma_1$.
  \eproposition
 \end{proposition}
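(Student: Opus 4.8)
The plan is to derive the statement as the $M=2$ specialization of Lemma~\ref{lemma:ratio:perm:permbM:1}. Setting $M = 2$ in~\eqref{eq:ratio:perm:permbM:1} gives
\begin{align*}
  \frac{\perm(\mtheta)}{\permbM{2}(\mtheta)}
    &= \left(
         \sum_{\sigma_1,\sigma_2 \in \setS_{[n]}}
           \pmtheta(\sigma_1) \, \pmtheta(\sigma_2)
           \cdot
           \frac{\CBtwo{n}(\vgam)}{\Ctwo{n}(\vgam)}
       \right)^{\!\! -1/2},
\end{align*}
where $\vgam \defeq \tfrac{1}{2}\bigl( \mP_{\sigma_1} + \mP_{\sigma_2} \bigr) \in \Gamma_{2,n}$. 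Comparing this with the claimed~\eqref{eq:ratio:perm:permbM:2:result:1}, it thus suffices to establish the pointwise identity $\CBtwo{n}(\vgam) / \Ctwo{n}(\vgam) = 2^{-c(\sigma_1,\sigma_2)}$ for every such $\vgam$, which I would split into the two independent claims $\CBtwo{n}(\vgam) = 1$ and $\Ctwo{n}(\vgam) = 2^{c(\sigma_1,\sigma_2)}$.

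For the first claim I would evaluate formula~\eqref{sec:1:eqn:213} directly. Since every entry of $\vgam$ lies in $\{0, \tfrac12, 1\}$, the factor $\tfrac{(2 - 2\gamma(i,j))!}{(2\gamma(i,j))!}$ equals $2$, $1$, or $\tfrac12$ according to whether $\gamma(i,j)$ equals $0$, $\tfrac12$, or $1$. Writing $p$ for the number of positions where $\sigma_1$ and $\sigma_2$ agree (equivalently, the number of fixed points of $\sigma_1 \circ \sigma_2^{-1}$), there are $p$ entries equal to $1$, exactly $2(n-p)$ entries equal to $\tfrac12$, and $n^2 - 2n + p$ entries equal to $0$. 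Hence the product equals $2^{n^2 - 2n + p} \cdot 2^{-p} = 2^{n^2 - 2n}$, which is exactly cancelled by the prefactor $(2!)^{2n - n^2} = 2^{2n - n^2}$, giving $\CBtwo{n}(\vgam) = 1$. (This is consistent with Fig.~\ref{fig:pascal:triangle:generalization:2}, where every box at level $M = 2$ contains $1$.)

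The second claim is the heart of the argument, and the step I expect to be the main obstacle. By~\eqref{sec:1:eqn:56}, $\Ctwo{n}(\vgam)$ counts the pairs $(\tau_1,\tau_2) \in \setS_{[n]}^2$ with $\mP_{\tau_1} + \mP_{\tau_2} = \mP_{\sigma_1} + \mP_{\sigma_2}$. At the positions where this sum equals $2$ (the agreements) both $\tau_1$ and $\tau_2$ are forced to carry a $1$, and at positions where it equals $0$ both must carry a $0$; all freedom resides on the support $S$ of the fractional entries. I would observe that $S$, viewed as a bipartite graph between rows and columns, is $2$-regular (each fractional row and each fractional column has exactly two incident fractional entries), hence a disjoint union of even cycles, and that these cycles are in bijection with the cycles of length at least $2$ of $\sigma_1 \circ \sigma_2^{-1}$, of which there are $c(\sigma_1,\sigma_2)$. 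Any valid decomposition must split the edges of $S$ into the supports of $\tau_1$ and $\tau_2$ restricted to $S$, each of which is a perfect matching of the vertices of $S$; since an even cycle admits exactly two perfect matchings (its two alternating edge sets, which partition its edges), each cycle contributes one independent binary choice. Therefore $\Ctwo{n}(\vgam) = 2^{c(\sigma_1,\sigma_2)}$. The delicate points to make rigorous are the $2$-regularity of $S$ (that a column carrying an agreement carries nothing else, so the fractional rows and columns decouple cleanly from the forced part) and the bijection between the cycles of $S$ and the nontrivial cycles of $\sigma_1 \circ \sigma_2^{-1}$.

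Combining the two claims gives $\CBtwo{n}(\vgam)/\Ctwo{n}(\vgam) = 2^{-c(\sigma_1,\sigma_2)}$, and substituting this into the displayed $M=2$ specialization of Lemma~\ref{lemma:ratio:perm:permbM:1} yields exactly~\eqref{eq:ratio:perm:permbM:2:result:1}.
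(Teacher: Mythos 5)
Your proposal is correct and follows essentially the same route as the paper's own proof: specializing Lemma~\ref{lemma:ratio:perm:permbM:1} to $M=2$ and then establishing $\CBtwo{n}\bigl(\tfrac{1}{2}(\mP_{\sigma_1}+\mP_{\sigma_2})\bigr)=1$ by direct evaluation of~\eqref{sec:1:eqn:213} and $\Ctwo{n}\bigl(\tfrac{1}{2}(\mP_{\sigma_1}+\mP_{\sigma_2})\bigr)=2^{c(\sigma_1,\sigma_2)}$ via the bipartite graph of fractional entries, its decomposition into even cycles in bijection with the nontrivial cycles of $\sigma_1\circ\sigma_2^{-1}$, and the two-choices-per-cycle count. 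The points you flag as delicate (the $2$-regularity of the fractional support and the cycle bijection) are exactly what the paper's Lemma~\ref{lemma:vgam:graph:properties:1} records, and your sketches of them are sound.
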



Note that $c(\sigma_1,\sigma_2)$ satisfies\footnote{While the lower bound on
  $c(\sigma_1,\sigma_2)$ is immediately clear, the upper bound on
  $c(\sigma_1,\sigma_2)$ follows from permutations of $[n]$ that contain
  $\lfloor n/2 \rfloor $ cycles of length~$2$.}
\begin{align*}
  0 
    &\leq 
       c(\sigma_1,\sigma_2) 
     \leq 
       \Bigl\lfloor \frac{n}{2} \Bigr\rfloor \, , 
       \qquad \sigma_1,\sigma_{2} \in \setS_{[n]}(\mtheta)\, .
\end{align*}
These bounds on $c(\sigma_1,\sigma_2)$ yield
\begin{align}
  1
    &\overset{(a)}{\leq}
       \frac{ \perm(\mtheta) }{ \permbM{2}(\mtheta) }
     \overset{(b)}{\leq}
       \sqrt{2^{\lfloor n/2 \rfloor}}
     \leq
        2^{n/4}\, ,
          \label{eq:bounds:on"ratio:perm:permbM:2:1}
\end{align}
where the inequality~$(a)$ is obtained by lower bounding the expression
in~\eqref{eq:ratio:perm:permbM:2:result:1} with the help of
$c(\sigma_1,\sigma_2) \geq 0$ and where the inequality~$(b)$ is obtained by
upper bounding the expression in~\eqref{eq:ratio:perm:permbM:2:result:1} with
the help of $c(\sigma_1,\sigma_2) \leq \lfloor n/2 \rfloor$. Note that the
lower and upper bounds in~\eqref{eq:bounds:on"ratio:perm:permbM:2:1} on the
ratio $\perm(\mtheta) / \permbM{2}(\mtheta)$ match exactly the lower and upper
bounds in~\eqref{sec:1:eqn:147} for $M = 2$.


\begin{remark}
  A variant of Proposition~\ref{prop:ratio:perm:permbM:2:1} (and with that
  of~\cite[Proposition 2]{KitShing2022}), appears in~\cite[Lemma
  4.2]{Csikvari2017}. The setup in~\cite{Csikvari2017} is more special than
  that in~\cite{KitShing2022} in the sense that Csikv{\'a}ri
  in~\cite{Csikvari2017} considered $\{0,1\}$-valued matrices only, not
  arbitrary non-negative matrices. However, the result in~\cite{Csikvari2017}
  is more general than the result in~\cite{KitShing2022} in the sense that
  Csikv{\'a}ri studied matchings of size $k \in [n]$, and not just matchings
  of size $n$ (\ie, perfect matchings).  \eremark
\end{remark}


\noindent
\textbf{Alternative proof Proposition~\ref{prop:ratio:perm:permbM:2:1}.} 
Let us now show how the results in this paper can be used to rederive
Proposition~\ref{prop:ratio:perm:permbM:2:1}. Namely, we obtain
\begin{align*}
  &
  \frac{\perm(\mtheta)}
       {\permbM{2}(\mtheta)} \nonumber \\
    &\overset{(a)}{=}
       \left( \!
         \sum_{ \sigma_1, \sigma_2 \in \setS_{[n]} }
           \pmtheta(\sigma_1)
           \cdot
           \pmtheta(\sigma_2)
           \cdot
           \frac{\CBtwo{n}\Bigl( 
                            \frac{1}{2} 
                            \bigl( 
                              \mP_{\sigma_1} \! + \! \mP_{\sigma_2}
                            \bigr)
                          \Bigr)}
                {\Ctwo{n}\Bigl( 
                           \frac{1}{2} 
                           \bigl( 
                             \mP_{\sigma_1} \! + \! \mP_{\sigma_2}
                           \bigr)
                         \Bigr)}
       \right)^{\!\!\! -1/2} \\
    &\overset{(b)}{=}
       \Biggl( 
         \sum_{ \sigma_1, \sigma_2 \in \setS_{[n]} }
           \pmtheta(\sigma_1)
           \cdot
           \pmtheta(\sigma_2)
           \cdot
           \frac{1}
                {2^{c(\sigma_1,\sigma_2)}}
       \Biggr)^{\!\!\! -1/2}\, ,
\end{align*}
which is the same expression as
in~\eqref{eq:ratio:perm:permbM:2:result:1}. Here, step~$(a)$ follows from
Lemma~\ref{lemma:ratio:perm:permbM:1} for $M = 2$ and step~$(b)$ follows from
\begin{alignat}{2}
  \CBtwo{n}\biggl( 
             \frac{1}{2} 
             \bigl( 
               \mP_{\sigma_1} \! + \! \mP_{\sigma_2} 
             \bigr)
           \biggr)
    &= 1 \, , 
         &&\quad \sigma_1, \sigma_2 \in \setS_{[n]}\, ,
             \label{eq:CBtwo:result:1} \\
  \Ctwo{n}\biggl( 
            \frac{1}{2} 
            \bigl( 
              \mP_{\sigma_1} \! + \! \mP_{\sigma_2} 
            \bigr)
          \biggr)
    &= 2^{c(\sigma_1,\sigma_2)},
         &&\quad \sigma_1, \sigma_2 \in \setS_{[n]}\, .
             \label{eq:Ctwo:result:1}
\end{alignat}
Eqs.~\eqref{eq:CBtwo:result:1} and~\eqref{eq:Ctwo:result:1} are established in
Appendix~\ref{app:alt:proof:prop:ratio:perm:permbM:2:1}.
\epropositionproof


\ifx\withclearpagecommands\x
\clearpage
\fi

\section{Conclusion and Open Problems}
\label{sec:7}


We have shown that it is possible to bound the permanent of a non-negative
matrix by its degree-$M$ Bethe and scaled Sinkhorn permanents, thereby, among
other statements, proving a conjecture
in~\cite{Vontobel2013a}. A key result are the recursive
  expressions for the coefficients appearing in the expressions for the $M$-th
  power of the degree-$M$ Bethe permanent and the $M$-th power of the
  degree-$M$ scaled Sinkhorn permanent.
   

While the results in this paper do not yield any new numerical techniques for
bounding the permanent of a non-negative matrix, they can, besides being of
inherent combinatorial interest, potentially be used to obtain new analytical
results about the Bethe and the Sinkhorn approximation of the permanent,
similar to the results in the paper~\cite{KitShing2022}. (See also the
discussion at the end of Section~\ref{sec:introduction:1}.) 


Some open problems include the following:
\begin{itemize}

\item Our proofs used some rather strong results from~\cite{Schrijver1998,
    Gurvits2011, Anari2019, Egorychev1981, Falikman1981}. We leave it as an
  open problem to find ``more basic'' proofs for some of the inequalities that
  were established in this paper.

\item One can verify that proving the first inequality
  in~\eqref{eq:ratio:permanent:bethe:permanent:1} for all~$\vtheta$ is
  equivalent to proving $ \HG'(\vgam) \geq \HBthe( \vgam ) $ for all
  $ \vgam \in \Gamma_{n} $. Note that proving the first inequality
  in~\eqref{eq:ratio:permanent:bethe:permanent:1} for all $ \vtheta $ is also
  equivalent to proving Schrijver's inequality~\cite{Schrijver1998}. 
  We leave it as an open problem
  to prove $ \HG'(\vgam) \geq \HBthe( \vgam ) $ for all
  $ \vgam \in \Gamma_{n} $ via an approach different from Schrijver's
  inequality and its variations.


\item It was remarked that both the second inequality in~\eqref{sec:1:eqn:147}
  and the first inequality in~\eqref{sec:1:eqn:200} are not tight. We leave it
  as an open problem to find tighter bounds, or even tight bounds.

\item Recently, Huang and Vontobel in~\cite{Huang2024} extended Vontobel's
  results in~\cite{Vontobel2013a} by considering a class of bipartite S-NFGs
  where each local function is defined based on a (possibly different)
  multi-affine homogeneous real stable polynomial. These factor graphs can be
  seen as an extension of the class of S-NFGs studied in this
  paper. Generalizing the results of this paper to the S-NFGs discussed
  in~\cite{Huang2024} remains an open problem for future research.

\end{itemize}


\ifx\withclearpagecommands\x
\clearpage
\fi

\begin{appendices}



\section{Proof of Proposition~\ref{sec:1:prop:14}}
\label{apx:24}


For a large integer $M$, we obtain
\begin{align*}
  &\bigl( \permscsM{M}(\mtheta) \bigr)^{\! M} \\
  &\overset{(a)}{=}
     \sum_{ \vgam \in \GamMnthe }
       \mtheta^{ M \cdot \vgam } \cdot \CscSgen{M}{n}( \vgam ) \\
  &\overset{(b)}{=}  \sum_{ \vgam \in \GamMnthe }
      \exp\bigl( - M \cdot \UscSthe(\vgam) \bigr)
      \cdot
      \CscSgen{M}{n}( \vgam ) \\
  &\overset{(c)}{=}  
    \exp( o(M) ) \cdot \!
    \max_{ \vgam \in \GamMnthe }
      \exp\bigl( - M \cdot \UscSthe(\vgam) \bigr)
      \cdot
      \CscSgen{M}{n}(\vgam)
    \nonumber \\
  &\overset{(d)}{=}  
    \exp( o(M) ) \cdot 
    \max_{ \vgam \in \GamMnthe } \exp\bigl( - M \cdot \FscSthe(\vgam) \bigr)\, ,
\end{align*}
where step~$(a)$ follows from~\eqref{sec:1:eqn:168}, where step~$(b)$ follows
from the expression of $ \UscSthe $ in Definition~\ref{sec:1:def:20}, where
step~$(c)$ follows from the fact that $ \bigl| \GamMnthe \bigr| $ grows
polynomially with respect to $ M $, and where step~$(d)$ follows
from~\eqref{sec:1:eqn:207} and the definition of $\FscSthe(\vgam)$ in
Definition~\ref{sec:1:def:20}. Then we have
\begin{align*}
  &\limsup_{M \to \infty}
     \permscsM{M}(\mtheta) \\
  &= \limsup_{M \to \infty} 
       \exp\biggl( \frac{o(M)}{M} \biggr)
       \cdot
       \sqrt[M]{\max_{ \vgam \in \GamMnthe } 
                    \exp\bigl( - M \cdot \FscSthe(\vgam) \bigr)} \\
  &= \limsup_{M \to \infty} 
       \max_{ \vgam \in \GamMnthe } 
         \exp\bigl( - \FscSthe(\vgam) \bigr) \\
  &\overset{(a)}{=}
     \sup_{ \vgam \in \Gampnthe } 
       \exp\bigl( - \FscSthe(\vgam) \bigr) \\
  &= \exp
       \biggl(
         -
         \inf_{ \vgam \in \Gampnthe }
           \FscSthe(\vgam)
       \biggr) \\
  &\overset{(b)}{=} 
     \exp
       \biggl(
         -
         \min_{ \vgam \in \Gamnthe }
           \FscSthe(\vgam)
       \biggr) \\
  &\overset{(c)}{=} 
     \permscs(\vtheta)\, ,
\end{align*}
where step~$(a)$ is based on the definition
\begin{align*}
  \Gampnthe
    &\defeq
       \bigcup_{M \in \sZpp}
         \GamMnthe\, ,
\end{align*}
where step~$(b)$ follows from the set $\Gampnthe$ being dense in the compact
set $\Gamnthe$, and where step~$(c)$ follows from the definition of
$\permscs(\vtheta)$ in~\eqref{sec:1:eqn:192:part:2}.


\ifx\withclearpagecommands\x
\clearpage
\fi

\section{Proof of Lemma~\ref{sec:1:lem:15:copy:2}}
\label{app:sec:1:lem:15:copy:2}


In this appendix, we use the following, additional notation and conventions:
\begin{itemize}

\item We use the abbreviation $r$ for $r(\vgam)$.

\item We define the matrices
  \begin{align*}
    \tvgam
      &\defeq 
         M \cdot \vgam\, , \\ 
    \tvgam_{\sigma_1}
      &\defeq 
         (M-1) \cdot \vgam_{\sigma_1}\, .
  \end{align*}
  Note that $\tvgam_{\sigma_1} = \tvgam - \mP_{\sigma_1}$.

\item A product over an empty index set evaluates to $1$. For example, if
  $\setR$ is the empty set, then $\prod_{i \in \setR} (...) = 1$.

\end{itemize}

\bigformulatop{30}{31}{%
\begin{align}
  \hspace{0.25cm}&\hspace{-0.25cm}
  \frac{1}{\CBM{n}(\vgam)}
       \cdot
       \sum_{\sigma_1 \in \setS_{[n]}( \vgam )}
         \CBgen{M-1}{n}
           \bigl( 
             \vgam_{\sigma_1}
           \bigr)
    \nonumber \\
    &\overset{(a)}{=}
      \frac{1}{\bigl( M! \bigr)^{ \! 2n - n^2}}
      \cdot
      \Biggl( 
        \prod_{i,j}
          \frac{ \bigl(\tgam(i,j) \bigr)! }
               { \bigl( M - \tgam(i,j) \bigr)! }
      \Biggr)
      \cdot
      \sum_{\sigma_1 \in \setS_{[n]}( \vgam )}
        \bigl( (M-1)! \bigr)^{ \! 2n -  n^2} 
        \cdot
        \Biggl( 
          \prod_{i,j}
            \frac{ \bigl( M-1- \tgam_{\sigma_1}(i,j) \bigr)!}
                 { \bigl( \tgam_{\sigma_1}(i,j) \bigr)! } 
        \Biggr) \nonumber \\
    &\overset{(b)}{=}
       \frac{1}{M^{ 2n -  n^2}}
       \cdot
       \sum_{\sigma_1 \in \setS_{[n]}( \vgam )}
       \Biggl( 
         \prod_{i,j}
           \underbrace
           {
             \frac{  \bigl(\tgam(i,j) \bigr)! }
                  { \bigl( \tgam_{\sigma_1}(i,j) \bigr)! } 
             \cdot
             \frac{ \bigl( M-1- \tgam_{\sigma_1}(i,j) \bigr)! }
                  { \bigl( M - \tgam(i,j) \bigr)! }
           }_{\defeq \ (*)}
       \Biggr) \nonumber \\
    &\overset{(c)}{=}
       \frac{1}{M^{ 2n -  n^2}}
       \cdot
       \sum_{\sigma_1 \in \setS_{[n]}( \vgam )}
       \Biggl( 
         \prod_{\substack{i,j \\ j = \sigma_1(i)}}
           \tgam(i,j)
       \Biggr)
       \cdot
       \Biggl( 
         \prod_{\substack{i,j \\ j \neq \sigma_1(i)}}
           \frac{ 1 }
                { M - \tgam(i,j) }
       \Biggr) \nonumber \\
    &\overset{(d)}{=}
       \frac{1}{M^{ 2n -  n^2}}
       \cdot
       \sum_{\sigma_1 \in \setS_{[n]}( \vgam )}
       \Biggl( 
         \prod_{\substack{(i,j) \in \setR \times \setC \\ j = \sigma_1(i)}}
           \tgam(i,j)
       \Biggr)
       \cdot
       \underbrace
       {
         \Biggl( 
           \prod_{\substack{(i,j) \notin \setR \times \setC \\ j = \sigma_1(i)}}
             \tgam(i,j)
         \Biggr)
       }_{\overset{(e)}{=} \ M^{n-r}}
       \cdot
       \Biggl( 
         \prod_{\substack{(i,j) \in \setR \times \setC \\ j \neq \sigma_1(i)}}
           \frac{ 1 }
                { M - \tgam(i,j) }
       \Biggr)
       \cdot
       \underbrace{
         \Biggl( 
           \prod_{\substack{(i,j) \notin \setR \times \setC \\ j \neq \sigma_1(i)}}
             \frac{ 1 }
                  { M - \tgam(i,j) }
         \Biggr)
       }_{\overset{(f)}{=} \ \frac{1}{M^{n^2 - r^2 - (n - r)}}} \nonumber \\
    &= \frac{1}{M^{ 2r -  r^2}}
       \cdot
       \sum_{\sigma_1 \in \setS_{[n]}( \vgam )}
       \Biggl( 
         \prod_{\substack{(i,j) \in \setR \times \setC \\ j = \sigma_1(i)}}
           \tgam(i,j)
       \Biggr)
       \cdot
       \Biggl( 
         \prod_{\substack{(i,j) \in \setR \times \setC \\ j \neq \sigma_1(i)}}
           \frac{ 1 }
                { M - \tgam(i,j) }
       \Biggr) \nonumber \\
    &= \sum_{\sigma_1 \in \setS_{[n]}( \vgam )}
       \Biggl( 
         \prod_{\substack{(i,j) \in \setR \times \setC \\ j = \sigma_1(i)}}
           \gamma(i,j)
       \Biggr)
       \cdot
       \Biggl( 
         \prod_{\substack{(i,j) \in \setR \times \setC \\ j \neq \sigma_1(i)}}
           \frac{ 1 }
                { 1 - \gamma(i,j) }
       \Biggr) \nonumber \\
    &= \sum_{\sigma_1 \in \setS_{[n]}( \vgam )} 
         \frac{\prod_{i \in \setR}
                 \gamma(i,\sigma_1(i))
                 \, \cdot \,
                 \bigr( 1 - \gamma\isigmaoi \bigl)
              }
              {\prod\limits_{(i',j') \in \setR \times \setC}
                 \bigr( 1 - \gamma(i',j') \bigl)
              } \nonumber \\
    &= \sum_{\sigma_1 \in \setS_{[n]}( \vgam )} 
         \prod_{i \in \setR}
           \underbrace
           {
             \frac{
                   \gamma(i,\sigma_1(i))
                   \, \cdot \,
                   \bigr( 1 - \gamma\isigmaoi \bigl)
                  }
                  {
                   \Biggl(
                     \prod\limits_{(i',j') \in \setR \times \setC}
                     \bigr( 1 - \gamma(i',j') \bigl)
                   \Biggr)^{\!\!\! 1 / r}
                  }
           }_{\overset{(g)}{=} \ \hgamRCp\isigmaoi}
    \nonumber \\
    &= \sum_{\sigma_1 \in \setS_{[n]}( \vgam )} \ 
         \prod_{i \in \setR}
           \hgamRCp\isigmaoi
             \nonumber \\
    &\overset{(h)}{=}
       \perm(\hvgamRCp)\, .
   \label{eqn: derivations of recursion of CBM}
\end{align}
}



The expression in~\eqref{eqn: derivations of recursion of CBM} at the top of
the next page is then obtained as follows:
\begin{itemize}

\item Step~$(a)$ follows from Definition~\ref{sec:1:lem:43}.

\item Step~$(b)$ follows from merging terms and using
  $\frac{(M-1)!}{M!} \! = \! \frac{1}{M}$.

\item Step~$(c)$ follows from the fact that the value of $(*)$ equals
  $\tgam(i,j)$ if $j = \sigma_1(i)$ and equals $\frac{1}{M - \tgam(i,j)}$ if
  $j \neq \sigma_1(i)$.\footnote{Note that, if $j \neq \sigma_1(i)$, then
    $\tgam(i,j) < M$, and with that $M - \tgam(i,j) > 0$.}
  The value of $(*)$ can be computed using the following relationships. \\
  If $j = \sigma_1(i)$ then
  \begin{align*}
    \tgam_{\sigma_1}(i,j)
      &= \tgam(i,j) - 1\, , \\
    (M-1) - \tgam_{\sigma_1}(i,j)
      &= M - \tgam(i,j)\, .
  \end{align*}
  If $j \neq \sigma_1(i)$ then
  \begin{align*}
    \tgam_{\sigma_1}(i,j)
      &= \tgam(i,j)\, , \\
    (M-1) - \tgam_{\sigma_1}(i,j)
      &= M - \tgam(i,j) - 1\, .
  \end{align*}

\item Step~$(d)$ follows from splitting the product over
  $(i,j) \in [n] \times [n]$ into two products: one over
  $(i,j) \in \setR \times \setC$ and one over
  $(i,j) \notin \setR \times \setC$.

\item Step~$(e)$ follows from the fact that this product has $n - r$ terms
  that each evaluate to $M$.

\item Step~$(f)$ follows from the fact that this product has $n^2 - r^2 -
  (n \! - \! r)$ terms that each evaluate to $\frac{1}{M}$.

\item Step~$(g)$ follows from the definition of the matrix~$\hvgamRCp$ in
  Definition~\ref{sec:1:def:10:part:2}.

\item Step~$(h)$ follows from the following considerations.

  If the sets $\setR$ and $\setC$ are empty, which happens exactly when
  $\vgam$ is a permutation matrix, we obtain
  \begin{align*}
    \sum_{\sigma_1 \in \setS_{[n]}( \vgam )} \ 
      \prod_{i \in \setR}
        \hgamRCp\isigmaoi 
      &= \!\!\!\!\!\! \sum_{\sigma_1 \in \setS_{[n]}( \vgam )} \!\!\!\!\!\!
           1
       = 1
       = \perm(\hvgamRCp)\, ,
  \end{align*}
  where the first equality follows from the convention that a product over an
  empty index set evaluates to $1$, where the second equality follows from
  $\setS_{[n]}( \vgam )$ containing only a single element, and where the third
  equality follows from the definition $\perm(\hvgamRCp) \defeq 1$ when the
  sets $\setR$ and $\setC$ are empty.

  If the sets $\setR$ and $\setC$ are non-empty, the result follows from
  \begin{align*}
    \hspace{0.5cm}&\hspace{-0.50cm}
    \sum_{\sigma_1 \in \setS_{[n]}( \vgam )} \ 
      \prod_{i \in \setR} 
        \hgamRCp\isigmaoi \\
      &= \sum_{\sigma_1 \in \setS_{\setR \to \setC}( \hvgamRCp )} \ 
           \prod_{i \in \setR} 
             \hgamRCp\isigmaoi
       = \perm(\hvgamRCp)\, ,
  \end{align*}
  where the first equality follows from the observation that there is a
  bijection between the set of permutations $\setS_{[n]}( \vgam )$ and the set
  of permutations $\setS_{\setR \to \setC}(\hvgamRCp)$, which is given by
  restricting the permutations in $\setS_{[n]}( \vgam )$ to take arguments
  only in $\setR$.


\end{itemize}
Finally, multiplying both sides of~\eqref{eqn: derivations of recursion of
  CBM} by $\frac{\CBM{n}(\vgam)}{\perm(\hvgamRCp)}$ yields the result promised
in the lemma statement.


\ifx\withclearpagecommands\x
\clearpage
\newpage
\fi

\section{Proof of Lemma~\ref{sec:1:lem:30}}
\label{apx:23}


\bigformulatop{31}{32}{%
\begin{align}
  \frac{1}{\CscSgen{M}{n}(\vgam)}
    \cdot
    \sum_{\sigma_1 \in \setS_{[n]}( \vgam )}
      \CscSgen{M-1}{n}(\vgam_{\sigma_1})
    &\overset{(a)}{=} 
       M^{n \cdot M}
       \cdot 
       \frac{ 
             \prod_{i,j} 
               \tgam(i,j)!
            } 
            { \bigr( M! \bigr)^{\! 2n} }
       \cdot
       \sum_{\sigma_1 \in \setS_{[n]}( \vgam )} 
         (M-1)^{- n \cdot (M-1)}
         \cdot 
         \frac{ \bigr( (M-1)! \bigr)^{\! 2n} }
              { \prod_{i,j} 
                  \tgam_{\sigma_1}(i,j)!  
              } \nonumber \\
    &\overset{(b)}{=} 
       \underbrace
       {
         \biggl( \frac{M}{M-1} \biggr)^{\!\! n \cdot (M-1)}
       }_{= \ ( \chi(M) )^{n}}
       \cdot \ 
       \frac{1}{M^n}
       \cdot
       \sum_{\sigma_1 \in \setS_{[n]}( \vgam )}
         \prod_{i,j}
           \underbrace
           {
             \frac{ \tgam(i,j)! }
                  { \tgam_{\sigma_1}(i,j)! }
           }_{\defeq \ (*)}
             \nonumber \\
    &\overset{(c)}{=} 
       \bigl( \chi(M) \bigr)^{\! n}
       \cdot
       \frac{1}{M^n}
       \cdot
       \sum_{\sigma_1 \in \setS_{[n]}( \vgam )} 
         \prod_{\substack{i,j \\ j = \sigma_1(i)}}
           \tgam(i,j)
             \nonumber \\
    &= \bigl( \chi(M) \bigr)^{\! n}
       \cdot
       \frac{1}{M^n}
       \cdot
       \sum_{\sigma_1 \in \setS_{[n]}( \vgam )} 
         \prod_i
           \tgam\isigmaoi
             \nonumber \\
    &= \bigl( \chi(M) \bigr)^{\! n}
       \cdot
       \sum_{\sigma_1 \in \setS_{[n]}( \vgam )} 
         \prod_i
           \gamma\isigmaoi
             \nonumber \\
    &= \bigl( \chi(M) \bigr)^{\! n}
       \cdot 
       \perm( \vgam )\, .
         \label{eqn: derivations of recursion of CscSM}
\end{align}
}


We start by defining the matrices
\begin{align*}
  \tvgam
    &\defeq 
       M \cdot \vgam\, , \\ 
  \tvgam_{\sigma_1}
    &\defeq 
       (M-1) \cdot \vgam_{\sigma_1}\, .
\end{align*}
Note that $\tvgam_{\sigma_1} = \tvgam - \mP_{\sigma_1}$.


The expression in~\eqref{eqn: derivations of recursion of CscSM}
at the top of the next page is then obtained as
follows:
\begin{itemize}

\item Step~$(a)$ follows from Definition~\ref{sec:1:lem:43}.

\item Step~$(b)$ follows from merging terms and using
  $\frac{(M-1)!}{M!} \! = \! \frac{1}{M}$.

\item Step~$(c)$ follows from the fact that the value of $(*)$ equals
  $\tgam(i,j)$ if $j = \sigma_1(i)$ and equals $1$ if $j \neq \sigma_1(i)$.
  The value of $(*)$ can be computed using the following relationships. \\
  If $j = \sigma_1(i)$ then
    \begin{align*}
      \tgam_{\sigma_1}(i,j)
        &= \tgam(i,j) - 1\, .
    \end{align*}
    If $j \neq \sigma_1(i)$ then
    \begin{align*}
      \tgam_{\sigma_1}(i,j)
        &= \tgam(i,j)\, .
    \end{align*}

\end{itemize}
Finally, multiplying both sides of~\eqref{eqn: derivations of recursion of
  CscSM} by
$\frac{\CscSgen{M}{n}(\vgam_{\sigma_1})}{\bigl( \chi(M) \bigr)^{\! n} \cdot
  \perm( \vgam )}$ yields the result promised in the lemma statement.


\ifx\withclearpagecommands\x
\clearpage
\mbox{}
\newpage
\fi

\section{Proof of Lemma~\ref{lemma:ratio:perm:permbM:1}}
\label{app:proof:lemma:ratio:perm:permbM:1}


We obtain
\begin{align*}
  &\bigl( \permbM{M}(\mtheta) \bigr)^{\!M} \\
    &\overset{(a)}{=}
       \sum\limits_{ \vgam \in \GamMn } 
         \mtheta^{ M \cdot \vgam }
         \cdot
         \CBM{n}( \vgam ) \\
    &\overset{(b)}{=}
       \sum\limits_{ \vgam \in \GamMn } 
         \mtheta^{ M \cdot \vgam }
         \cdot
         \frac{\CBM{n}(\vgam)}
              {\CM{n}( \vgam )}
         \cdot
         \CM{n}( \vgam ) \\
    &\overset{(c)}{=}
       \sum\limits_{ \vgam \in \GamMn } 
         \mtheta^{ M \cdot \vgam }
         \cdot
         \frac{\CBM{n}(\vgam)}
              {\CM{n}( \vgam )}
         \cdot
         \sum_{ \vsigma_{[M]} \in \setS_{[n]}^{M} }
           \Bigl[ 
             \vgam = \bigl\langle \mP_{\sigma_{m}} \bigr\rangle_{m \in [M]}
           \Bigr] \\
    &= \sum_{ \vsigma_{[M]} \in \setS_{[n]}^{M} }
         \sum\limits_{ \vgam \in \GamMn } 
           \mtheta^{ M \cdot \vgam }
           \cdot
           \frac{\CBM{n}(\vgam)}
                {\CM{n}( \vgam )}
           \cdot
           \Bigl[ 
             \vgam = \bigl\langle \mP_{\sigma_{m}} \bigr\rangle_{m \in [M]}
           \Bigr] \\
    &= \sum_{ \vsigma_{[M]} \in \setS_{[n]}^{M} }
         \sum\limits_{ \vgam \in \GamMn } 
           \mtheta^{ M \cdot \langle \mP_{\sigma_{m}} \rangle_{m \in [M]}}
           \cdot
           \frac{\CBM{n}\Bigl(
                          \bigl\langle \mP_{\sigma_{m}} \bigr\rangle_{m \in [M]}
                        \Bigr)}
                {\CM{n}\Bigl(
                         \bigl\langle \mP_{\sigma_{m}} \bigr\rangle_{m \in [M]}
                       \Bigr)} \\
    &\hspace{5.5cm}
           \cdot
           \Bigl[ 
             \vgam = \bigl\langle \mP_{\sigma_{m}} \bigr\rangle_{m \in [M]}
           \Bigr] \\
    &= \sum_{ \vsigma_{[M]} \in \setS_{[n]}^{M} }
         \mtheta^{ M \cdot \langle \mP_{\sigma_{m}} \rangle_{m \in [M]}}
         \cdot
         \frac{\CBM{n}\Bigl(
                        \bigl\langle \mP_{\sigma_{m}} \bigr\rangle_{m \in [M]}
                      \Bigr)}
              {\CM{n}\Bigl(
                        \bigl\langle \mP_{\sigma_{m}} \bigr\rangle_{m \in [M]}
                      \Bigr)} \\
    &\hspace{4.5cm}
         \cdot
         \underbrace
         {
           \sum\limits_{ \vgam \in \GamMn } 
           \Bigl[ 
             \vgam = \bigl\langle \mP_{\sigma_{m}} \bigr\rangle_{m \in [M]}
           \Bigr]
         }_{= \ 1} \\
    &= \sum_{ \vsigma_{[M]} \in \setS_{[n]}^{M} }
         \mtheta^{ M \cdot \langle \mP_{\sigma_{m}} \rangle_{m \in [M]}}
         \cdot
         \frac{\CBM{n}\Bigl(
                        \bigl\langle \mP_{\sigma_{m}} \bigr\rangle_{m \in [M]}
                      \Bigr)}
              {\CM{n}\Bigl(
                        \bigl\langle \mP_{\sigma_{m}} \bigr\rangle_{m \in [M]}
                      \Bigr)} \\
    &= \sum_{ \vsigma_{[M]} \in \setS_{[n]}^{M} } \!
         \Biggl(
           \prod_{m \in [M]} \!
             \mtheta^{ \mP_{\sigma_{m}} } \!
         \Biggr)
         \cdot
         \frac{\CBM{n}\Bigl(
                        \bigl\langle \mP_{\sigma_{m}} \bigr\rangle_{m \in [M]}
                      \Bigr)}
              {\CM{n}\Bigl(
                       \bigl\langle \mP_{\sigma_{m}} \bigr\rangle_{m \in [M]}
                     \Bigr)}\, ,
\end{align*}
where step~$(a)$ follows from~\eqref{sec:1:eqn:190}, where step~$(b)$ is valid
because $\CM{n}(\vgam) \geq \CBM{n}(\vgam) > 0$ for
  $\vgam \in \GamMn$, as stated in the lemma statement, and where step~$(c)$
follows from~\eqref{sec:1:eqn:56}. Dividing the above expression for
$\bigl( \permbM{M}(\mtheta) \bigr)^{\!M}$ by $\bigl( \perm(\mtheta) \bigr)^{\!M}$, we
get
\begin{align*}
  \hspace{0.5cm}&\hspace{-0.5cm}
  \Biggl( 
    \frac{\permbM{M}(\mtheta)}
         {\perm(\mtheta)}
  \Biggr)^{\!\!\! M} \\
    &= \sum_{ \vsigma_{[M]} \in \setS_{[n]}^{M} }
         \Biggl(
           \prod_{m \in [M]}
             \frac{\mtheta^{ \mP_{\sigma_{m}} }}
                  {\perm(\mtheta)}
         \Biggr)
         \cdot
         \frac{\CBM{n}\Bigl(
                        \bigl\langle \mP_{\sigma_{m}} \bigr\rangle_{m \in [M]}
                      \Bigr)}
              {\CM{n}\Bigl(
                        \bigl\langle \mP_{\sigma_{m}} \bigr\rangle_{m \in [M]}
                      \Bigr)} \\
    &= \sum_{ \vsigma_{[M]} \in \setS_{[n]}^{M} }
         \Biggl(
           \prod_{m \in [M]}
             \pmtheta(\sigma_m)
         \Biggr)
         \cdot
         \frac{\CBM{n}\Bigl(
                        \bigl\langle \mP_{\sigma_{m}} \bigr\rangle_{m \in [M]}
                      \Bigr)}
              {\CM{n}\Bigl(
                        \bigl\langle \mP_{\sigma_{m}} \bigr\rangle_{m \in [M]}
                      \Bigr)}\, .
\end{align*}



\ifx\withclearpagecommands\x
\clearpage
\fi
\section{An Alternative proof of the inequalities in~\eqref{sec:1:eqn:147} in
  Theorem~\ref{th:main:permanent:inequalities:1}}
\label{apx: alternative proof of perm geq permb}
\begin{itemize}

\item We prove the first inequality in~\eqref{sec:1:eqn:147} by observing that
  \begin{align*}
    \frac{\perm(\mtheta)}
         {\permbM{M}(\mtheta)}
      &\overset{(a)}{\geq}
         \left( \!
           \sum_{ \vsigma_{[M]} \in \setS_{[n]}^{M} }
             \Biggl(
               \prod_{m \in [M]}
                 \pmtheta(\sigma_m)
             \Biggr)
         \right)^{\!\!\! -1/M} \\
      &= \Biggl( 
           \prod_{m \in [M]}
             \sum_{\sigma_m \in \setS_{[n]}}
               \pmtheta(\sigma_m)
         \Biggr)^{\!\!\! -1/M} \\
      &\overset{(b)}{=}
         \Biggl( 
           \prod_{m \in [M]}
             1
         \Biggr)^{\!\!\! -1/M} \\
      &= 1\, ,
  \end{align*}
  where step~$(a)$ is obtained from~\eqref{eq:ratio:perm:permbM:1} by upper
  bounding the ratio $\CBM{n}(\ldots) / \CM{n}(\ldots)$ therein with the help
  of the first inequality in~\eqref{sec:1:eqn:58}, and where step~$(b)$ used
  the fact that $\pmtheta$ is a probability mass function.

\item We prove the second inequality in~\eqref{sec:1:eqn:147} by observing that
  \begin{align}
    &
    \frac{\perm(\mtheta)}
         {\permbM{M}(\mtheta)} \nonumber \\
      &\overset{(a)}{\leq}
         \left( \!
           \sum_{ \vsigma_{[M]} \in \setS_{[n]}^{M} }
             \Biggl(
               \prod_{m \in [M]}
                 \pmtheta(\sigma_m)
             \Biggr)
             \cdot
             \bigl( 2^{n/2} \bigr)^{\! -(M-1)}
         \right)^{\!\!\! -1/M} \nonumber \\
      &= \bigl( 2^{n/2} \bigr)^{\!\! \frac{M-1}{M}}
         \cdot
         \Biggl( 
           \prod_{m \in [M]}
             \sum_{\sigma_m \in \setS_{[n]}}
               \pmtheta(\sigma_m)
         \Biggr)^{\!\!\! -1/M} \nonumber \\
      &\overset{(b)}{=}
         \bigl( 2^{n/2} \bigr)^{\!\! \frac{M-1}{M}}
         \cdot
         \Biggl(
           \prod_{m \in [M]}
             1
         \Biggr)^{\!\!\! -1/M} \nonumber \\
      &= \bigl( 2^{n/2} \bigr)^{\!\! \frac{M-1}{M}}\, ,
           \label{eq:ratio:perm:permbM:proof:upper:bound:1}
  \end{align}
  where step~$(a)$ is obtained from~\eqref{eq:ratio:perm:permbM:1} by lower
  bounding the ratio $\CBM{n}(\ldots) / \CM{n}(\ldots)$ therein with the help
  of the second inequality in~\eqref{sec:1:eqn:58}, and where step~$(b)$ used
  the fact that $\pmtheta$ is a probability mass function.

\end{itemize}

\ifx\withclearpagecommands\x
\clearpage
\fi

\section{Another Alternative proof of the inequalities in~\eqref{sec:1:eqn:147} in
  Theorem~\ref{th:main:permanent:inequalities:1}}
\label{apx: alternative prooof of perm geq permb: 2}

We start this alternative proof by proving the following lemma.
\begin{lemma}\label{prop: a special case of Navin conjecture}
  For any integers $ M_{1} \in \sZpp $ and $ M_{2} \in \sZpp $, we have
  \begin{align}
      1 \leq 
      \frac{ 
        \bigl( \perm(\mtheta) \bigr)^{\! M_{1}} \cdot 
        \bigl( \permbM{M_{2}}(\mtheta) \bigr)^{\! M_{2}} 
      }{ \bigl( \permbM{M_{1}+M_{2}}(\mtheta) \bigr)^{\! M_{1}+M_{2}} }
      &\leq \bigl( 2^{n/2} \bigr)^{\! M_{1}}\, . 
      \label{eqn: a special case of Navin conjecture:2}
  \end{align}
\end{lemma}
\begin{proof}
  Consider an arbitrary integer $ M_{3} \in \sZ_{\geq 1} $. It holds that
  \begin{align}
    \hspace{0.25cm}&\hspace{-0.25cm}
    \perm(\mtheta) \cdot \bigl( \permbM{M_{3}}(\mtheta) \bigr)^{\! M_{3}}
    \nonumber\\
    &\overset{(a)}{=}
    \Biggl(
          \sum_{\sigma_1 \in \setS_{[n]}( \mtheta )} 
          \mtheta^{ \mP_{\sigma_1} }
        \Biggr)
       \cdot 
       \sum_{\vgam \in \Gamma_{M_{3},n}(\mtheta)} 
       \mtheta^{ M_{3} \cdot \vgam }
       \cdot
      \CBgen{M_{3}}{n}(\vgam) 
    \nonumber\\
    &\overset{(b)}{=}
    \sum_{\vgam \in \Gamma_{M_{3}+1,n}(\mtheta)} 
           \mtheta^{ (M_{3}+1) \cdot \vgam }
           \cdot
           \sum_{\sigma_1 \in \setS_{[n]}( \vgam )}
           \CBgen{M_{3}}{n}
               \bigl( 
                 \vgam_{\sigma_1}
               \bigr)
    \nonumber\\
    &\overset{(c)}{=}
    \sum_{\vgam \in \Gamma_{M_{3}+1,n}(\mtheta)} 
           \mtheta^{ (M_{3}+1) \cdot \vgam }
           \cdot
           \perm(\hvgamRCp)
           \cdot
           \CBgen{M_{3}+1}{n}(\vgam)
   \nonumber\\
    &\overset{(d)}{\geq}
    \sum_{\vgam \in \Gamma_{M_{3}+1,n}(\mtheta)} 
           \mtheta^{ (M_{3}+1) \cdot \vgam }
           \cdot
           \CBgen{M_{3}+1}{n}(\vgam)
           \nonumber\\
    &\overset{(e)}{=}
    \bigl( \permbM{M_{3} +1}(\mtheta) \bigr)^{\! M_{3}+1}\, , 
    \label{eqn: a special case of Navin conjecture:3}
  \end{align} 
  where step $(a)$ follows from the definition of the permanent and the expression in~\eqref{sec:1:eqn:190} for $ M = M_{3} $,
  where step $(b)$ follows from the definition of $ \vgam_{\sigma_1} $ in Definition~\ref{sec:1:def:10:part:2:add} for $ M = M_{3} + 1 $,
  where step $(c)$ follows from Lemma~\ref{sec:1:lem:15:copy:2} for $ M = M_{3} + 1 $,
  where step $(d)$ follows from Lemma~\ref{sec:1:lem:17},
  and where step $(e)$ follows from the expression in~\eqref{sec:1:eqn:190} for $ M = M_{3} + 1 $.

  Because the inequality in~\eqref{eqn: a special case of Navin conjecture:3} holds for any $ M_{3} \in \sZ_{\geq 1} $, we have
  \begin{align*}
    \hspace{0.25cm}&\hspace{-0.25cm}
    \bigl( \permbM{M_{1}+M_{2}}(\mtheta) \bigr)^{\! M_{1}+M_{2}}
    \nonumber\\
    &\leq \perm(\mtheta) \cdot 
    \bigl( \permbM{M_{1}+M_{2}-1}(\mtheta) \bigr)^{\! M_{1}+M_{2}-1}
    \nonumber\\
    &\leq \bigl( \perm(\mtheta) \bigr)^{\! 2} 
    \cdot \bigl( \permbM{M_{1}+M_{2}-2}(\mtheta) \bigr)^{\! M_{1}+M_{2}-2}
    \nonumber\\
    &\ \cdots
    \nonumber\\
    &\leq \bigl( \perm(\mtheta) \bigr)^{\! M_{1}} 
    \cdot \bigl( \permbM{M_{2}}(\mtheta) \bigr)^{\! M_{2}}\, .
  \end{align*}

  A similar line of reasoning can be used to prove the upper bound in~\eqref{eqn: a special case of Navin conjecture:2}.
  The details are omitted.
\end{proof}
The inequalities~\eqref{sec:1:eqn:147} can now be proven by observing that they are special cases of Lemma~\ref{prop: a special case of Navin conjecture}. Namely, they follow from~\eqref{eqn: a special case of Navin conjecture:2} for $M_{1} = M-1$ and $M_{2} = 1$, along with using $ \perm(\mtheta) = \permbM{1}(\mtheta) $.

\ifx\withclearpagecommands\x
\clearpage
\fi

\section{Details of the Alternative Proof of 
               Proposition~\ref{prop:ratio:perm:permbM:2:1}}
\label{app:alt:proof:prop:ratio:perm:permbM:2:1}


\medskip

Toward establishing \eqref{eq:CBtwo:result:1} and~\eqref{eq:Ctwo:result:1}, we 
start by studying the matrix
\begin{align*}
  \frac{1}{2} 
  \bigl( 
  \mP_{\sigma_1} \! + \! \mP_{\sigma_2} 
  \bigr)
\end{align*}
for arbitrary $\sigma_1, \sigma_2 \in \setS_{[n]}$.


\begin{lemma}
  \label{lemma:vgam:graph:properties:1}

  Let $\sigma_1, \sigma_2 \in \setS_{[n]}$ be fixed. Define the matrix
  \begin{align*}
    \vgam
      &\defeq
         \frac{1}{2} 
         \bigl( 
           \mP_{\sigma_1} \! + \! \mP_{\sigma_2} 
          \bigr).
  \end{align*}
  Let the graph $\sfG(\sigma_1,\sigma_2)$ be a bipartite graph with two times
  $n$ vertices: $n$ vertices on the LHS and $n$ vertices on the RHS, each of
  them labeled by $[n]$. Let there be an edge $(i,j)$ connecting the $i$-th
  vertex on the LHS with the $j$-th vertex on the RHS if
  $\gamma(i,j) = \frac{1}{2}$. Let $c\bigl( \sfG(\sigma_1,\sigma_2) \bigr)$ be
  the number of cycles in $\sfG(\sigma_1,\sigma_2)$.

  The matrix $\vgam$ has the following properties:
  \begin{enumerate}
    
  \item $\vgam \in \Gamtwon$.

  \item For every $i \in [n]$, the $i$-th row $\vgam(i,:)$ of $\vgam$ has one
    of the following two possible compositions:
    \begin{itemize}
  
    \item One entry is equal to $1$; $n \! - \! 1$ entries are equal to~$0$.
  
    \item Two entries are equal to $\frac{1}{2}$; $n \! - \! 2$ entries are
      equal to~$0$.

    \end{itemize}

  \end{enumerate}
  The graph $\sfG(\sigma_1,\sigma_2)$ has the following properties:
  \begin{enumerate}
    
  \item The edge set of $\sfG(\sigma_1,\sigma_2)$ forms a collection of
    disjoint cycles of even length.

  \item Every cycle in $\sfG(\sigma_1,\sigma_2)$ of length $2L$ corresponds to
    a cycle of length $L$ in the cycle notation of
    $\sigma_1 \circ \sigma_2^{-1}$.

  \item $c\bigl( \sfG(\sigma_1,\sigma_2) \bigr) = c(\sigma_1,\sigma_2)$.

  \end{enumerate}

\end{lemma}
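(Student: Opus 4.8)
The plan is to establish the two matrix properties directly from the definition of $\vgam$, then derive the graph properties from a degree count followed by a careful tracing of cycles. First I would note that $\mP_{\sigma_1}$ and $\mP_{\sigma_2}$ are $0$--$1$ permutation matrices, so each row and each column of $\mP_{\sigma_1} + \mP_{\sigma_2}$ sums to $2$ and every entry lies in $\{0,1,2\}$; dividing by $2$ gives row and column sums equal to $1$ with entries in $\{0,\tfrac{1}{2},1\}$, hence $\vgam \in \Gamtwon$. For the row composition I would fix a row $i$ and split into the cases $\sigma_1(i) = \sigma_2(i)$ and $\sigma_1(i) \neq \sigma_2(i)$: in the first case the single coinciding position gives $\gamma(i,\sigma_1(i)) = 1$ and all other entries $0$, while in the second case the two distinct positions $\sigma_1(i)$ and $\sigma_2(i)$ each receive value $\tfrac{1}{2}$, with the rest $0$.

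For the graph, the preliminary observation is that edge $(i,j)$ is present exactly when $\gamma(i,j) = \tfrac{1}{2}$, i.e.\ exactly when precisely one of $\sigma_1(i) = j$, $\sigma_2(i) = j$ holds. Combined with the row composition just established, this shows every LHS vertex $i$ has degree $0$ (when $\sigma_1(i) = \sigma_2(i)$) or degree $2$ (when $\sigma_1(i) \neq \sigma_2(i)$), and the analogous column argument shows every RHS vertex has degree $0$ or $2$. A bipartite graph in which every vertex has degree $0$ or $2$ is a disjoint union of isolated vertices and even-length cycles, which yields the first graph property.

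The central step, and the one I expect to require the most care, is the correspondence in the second graph property. Writing $\pi \defeq \sigma_1 \circ \sigma_2^{-1}$, I would trace a cycle of $\sfG(\sigma_1,\sigma_2)$ starting from an RHS vertex $j$: each vertex has exactly two incident edges, one arising from the relation $\sigma_1(\cdot) = j$ and one from $\sigma_2(\cdot) = j$. Entering a LHS vertex along its $\sigma_2$-edge forces leaving along its $\sigma_1$-edge, so departing $j$ along the $\sigma_2$-edge to $i = \sigma_2^{-1}(j)$ and then along the $\sigma_1$-edge arrives at the RHS vertex $\sigma_1(\sigma_2^{-1}(j)) = \pi(j)$. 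Iterating shows that the RHS vertices visited by a cycle are precisely a $\pi$-orbit, so a $\pi$-cycle of length $L$ yields a bipartite cycle visiting $L$ RHS and $L$ LHS vertices, i.e.\ a cycle of length $2L$. The fixed points of $\pi$ correspond exactly to the rows with $\sigma_1(i) = \sigma_2(i)$, i.e.\ the isolated vertices, so only $\pi$-cycles of length $L \geq 2$ produce genuine cycles of $\sfG(\sigma_1,\sigma_2)$.

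This establishes the second property, and counting cycles then gives the third immediately: by the definition of $c(\sigma_1,\sigma_2)$ as the number of length-$\geq 2$ cycles of $\pi$, the bijection between bipartite cycles and nontrivial $\pi$-orbits yields $c\bigl(\sfG(\sigma_1,\sigma_2)\bigr) = c(\sigma_1,\sigma_2)$. The main obstacle is keeping the traversal orientation consistent so that the induced map on RHS vertices is exactly $\pi$ rather than $\pi^{-1}$ or $\sigma_2^{-1}\circ\sigma_1$; once the alternation rule ``enter along one permutation's edge, leave along the other's'' is fixed, the remaining bookkeeping is routine.
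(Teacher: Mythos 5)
Your proof is correct and follows essentially the same route as the paper: the paper's proof consists precisely of your degree observations (an LHS vertex $i$ has degree $0$ or $2$ according to whether $\sigma_1(i) = \sigma_2(i)$, and analogously for RHS vertices via $\sigma_1^{-1}(j)$ versus $\sigma_2^{-1}(j)$) together with bipartiteness forcing all cycles to have even length. You actually go further than the paper, which omits the matrix properties as straightforward and leaves the cycle-to-orbit correspondence implicit; your explicit traversal argument (leave an RHS vertex $j$ along its $\sigma_2$-edge, return to the RHS along the $\sigma_1$-edge, landing at $\sigma_1\bigl(\sigma_2^{-1}(j)\bigr)$, so the RHS vertices of a graph cycle form exactly an orbit of $\sigma_1 \circ \sigma_2^{-1}$) supplies the missing detail behind graph properties 2 and 3.
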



\begin{proof}
  The derivation of the properties of $\vgam$ is rather straightforward and is
  therefore omitted. The properities of $\sfG(\sigma_1,\sigma_2)$ follow from the following
  observations:
  \begin{itemize}
    
  \item Because $\sfG(\sigma_1,\sigma_2)$ is a bipartite graph, all cycles
    must have even length.

  \item Let $i \in [n]$. If $\sigma_1(i) = \sigma_2(i)$, then the $i$-th
    vertex on the LHS has no indicent edges.
    
  \item Let $i \in [n]$. If $\sigma_1(i) \neq \sigma_2(i)$, then the $i$-th
    vertex on the LHS has degree two and is part of a cycle.

  \item Let $j \in [n]$. If $\sigma_1^{-1}(j) = \sigma_2^{-1}(j)$, or,
    equivalently, if $\sigma_1\bigl( \sigma_2^{-1}(j) \bigr) = j$, then the
    $j$-th vertex on the RHS has no indicent edges.
    
  \item Let $j \in [n]$. If $\sigma_1^{-1}(j) \neq \sigma_2^{-1}(j)$, or,
    equivalently, if $\sigma_1\bigl( \sigma_2^{-1}(j) \bigr) \neq j$, then the
    $j$-th vertex on the RHS has degree two and is part of a cycle.

  \end{itemize}
\end{proof}


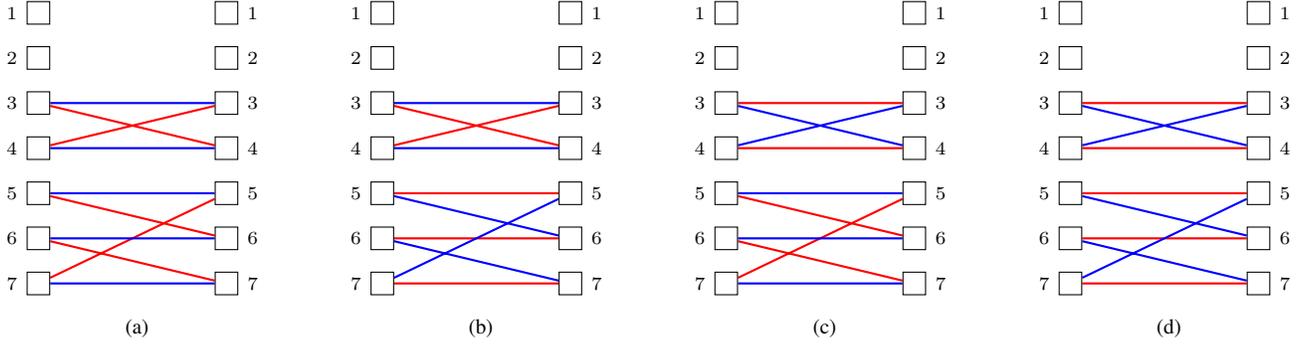
\begin{figure*}[t]
  \begin{centering}
    \subfloat[]{\begin{tikzpicture}[node distance=2.2cm, on grid,auto]
  \tikzstyle{state}=[shape=rectangle,fill=white,draw,minimum size=0.3cm]
  \pgfmathsetmacro{\ldis}{2.5} 
  \pgfmathsetmacro{\sdis}{0.5} 
  \begin{pgfonlayer}{glass}
    \foreach \i in {1,...,7}
    {
      \node[state] (f\i) at (0,-0.6*\i) [label=left: \scriptsize$\i$] {};
      \node[state] (g\i) at (\ldis,-0.6*\i) [label=right: \scriptsize$\i$] {};
    }
    \end{pgfonlayer}
     \begin{pgfonlayer}{background}
         \draw[thick,red]
            (f3) -- (g4)      
            (f4) -- (g3)
            (f5) -- (g6)
            (f6) -- (g7)
            (f7) -- (g5)
            ;
        \draw[thick,blue]
            (f3) -- (g3)      
            (f4) -- (g4)
            (f5) -- (g5)
            (f6) -- (g6)
            (f7) -- (g7)
            ;
    \end{pgfonlayer}
\end{tikzpicture}}
    \hspace{0.75cm}
    \subfloat[]{\begin{tikzpicture}[node distance=2.2cm, on grid,auto]
  \tikzstyle{state}=[shape=rectangle,fill=white,draw,minimum size=0.3cm]
  \pgfmathsetmacro{\ldis}{2.5} 
  \pgfmathsetmacro{\sdis}{0.5} 
  \begin{pgfonlayer}{glass}
    \foreach \i in {1,...,7}
    {
      \node[state] (f\i) at (0,-0.6*\i) [label=left: \scriptsize$\i$] {};
      \node[state] (g\i) at (\ldis,-0.6*\i) [label=right: \scriptsize$\i$] {};
    }
    \end{pgfonlayer}
    \begin{pgfonlayer}{background}
         \draw[thick,red]
            (f3) -- (g4)      
            (f4) -- (g3)
            (f5) -- (g5)
            (f6) -- (g6)
            (f7) -- (g7)
            ;
        \draw[thick,blue]
            (f3) -- (g3)      
            (f4) -- (g4)
            (f5) -- (g6)
            (f6) -- (g7)
            (f7) -- (g5)
            ;
  \end{pgfonlayer}
  \vspace{0.5cm}
\end{tikzpicture}}
    \hspace{0.75cm}
    \subfloat[]{\begin{tikzpicture}[node distance=2.2cm, on grid,auto]
  \tikzstyle{state}=[shape=rectangle,fill=white,draw,minimum size=0.3cm]
  \pgfmathsetmacro{\ldis}{2.5} 
  \pgfmathsetmacro{\sdis}{0.5} 
  \begin{pgfonlayer}{glass}
    \foreach \i in {1,...,7}
    {
      \node[state] (f\i) at (0,-0.6*\i) [label=left: \scriptsize$\i$] {};
      \node[state] (g\i) at (\ldis,-0.6*\i) [label=right: \scriptsize$\i$] {};
    }
    \end{pgfonlayer}
    \begin{pgfonlayer}{background}
         \draw[thick,red]
            (f3) -- (g3)      
            (f4) -- (g4)
            (f5) -- (g6)
            (f6) -- (g7)
            (f7) -- (g5)
            ;
        \draw[thick,blue]
            (f3) -- (g4)      
            (f4) -- (g3)
            (f5) -- (g5)
            (f6) -- (g6)
            (f7) -- (g7)
            ;
  \end{pgfonlayer}
  \vspace{0.5cm}
\end{tikzpicture}}
    \hspace{0.75cm}
    \subfloat[]{\begin{tikzpicture}[node distance=2.2cm, on grid,auto]
  \tikzstyle{state}=[shape=rectangle,fill=white,draw,minimum size=0.3cm]
  \pgfmathsetmacro{\ldis}{2.5} 
  \pgfmathsetmacro{\sdis}{0.5} 
  \begin{pgfonlayer}{glass}
    \foreach \i in {1,...,7}
    {
      \node[state] (f\i) at (0,-0.6*\i) [label=left: \scriptsize$\i$] {};
      \node[state] (g\i) at (\ldis,-0.6*\i) [label=right: \scriptsize$\i$] {};
    }
    \end{pgfonlayer}
    \begin{pgfonlayer}{background}
         \draw[thick,red]
            (f3) -- (g3)      
            (f4) -- (g4)
            (f5) -- (g5)
            (f6) -- (g6)
            (f7) -- (g7)
            ;
        \draw[thick,blue]
            (f3) -- (g4)      
            (f4) -- (g3)
            (f5) -- (g6)
            (f6) -- (g7)
            (f7) -- (g5)
            ;
  \end{pgfonlayer}
\end{tikzpicture}}
    \medskip
    \caption{Graphs discussed in Examples~\ref{example:sfG:1}
      and~\ref{example:sfG:2}. (The red color is used for edges based on
      $\sigma_1$; the blue color is used for edges based on $\sigma_2$.)}
    \label{fig:example:sfG:1}
  \end{centering}
\end{figure*}


\begin{example}
  \label{example:sfG:1}

  Let $\sigma_1: [7] \to [7]$ defined by $\sigma_1(1) = 1$, $\sigma_1(2) = 2$,
  $\sigma_1(3) = 4$, $\sigma_1(4) = 3$, $\sigma_1(5) = 6$, $\sigma_1(6) = 7$,
  $\sigma_1(7) = 5$. Let $\sigma_2: [7] \to [7]$ defined by $\sigma_2(i) = i$
  for all $i \in [7]$. Note that $\sigma_1 \circ \sigma_2^{-1}$ equals the
  permuation $\sigma$ in Footnote~\ref{footnote:cycle:notation:example:1}. Let
  \begin{align}
    \vgam
      &\defeq
         \frac{1}{2} 
         \bigl( 
           \mP_{\sigma_1} \! + \! \mP_{\sigma_2} 
          \bigr)
       = \frac{1}{2}
           \left(
           \begin{smallmatrix}
             2 & 0     & 0   & 0   & 0   & 0   & 0   \\
             0      & 2 & 0   & 0   & 0   & 0   & 0   \\
             0      & 0     & 1 & 1 & 0   & 0   & 0   \\
             0      & 0     & 1 & 1 & 0   & 0   & 0   \\
             0      & 0     & 0   & 0   & 1 & 1 & 0   \\
             0      & 0     & 0   & 0   & 0   & 1 & 1 \\
             0      & 0     & 0   & 0   & 1 & 0   & 1
           \end{smallmatrix}
         \right)
           \label{eq:example:sfG:1:gamma:1}
  \end{align}
  and define the graph $\sfG(\sigma_1,\sigma_2)$ as in
  Lemma~\ref{lemma:vgam:graph:properties:1}. The resulting graph
  $\sfG(\sigma_1,\sigma_2)$ is shown in Fig.~\ref{fig:example:sfG:1}(a). Note
  that the graph $\sfG(\sigma_1,\sigma_2)$ has two cycles, \ie,
  $c\bigl( \sfG(\sigma_1,\sigma_2) \bigr) = 2$.
  \eexample
\end{example}


\begin{lemma}
  \label{lemma:sfGtwo:properties:1}

  Let $\vgam \in \Gamtwon$ be fixed. Define the graph
  $\sfG(\sigma_1,\sigma_2)$ as in
  Lemma~\ref{lemma:vgam:graph:properties:1}. The number of pairs
  $(\sigma_1, \sigma_2) \in (\setS_{[n]})^2$ such that\footnote{Note that the
    ordering in $(\sigma_1, \sigma_2)$ matters, \ie, $(\sigma_1, \sigma_2)$
    and $(\sigma_2, \sigma_1)$ are considered to be distinct pairs if
    $\sigma_1 \neq \sigma_2$.}
  \begin{align*}
    \vgam
      &= \frac{1}{2} 
         \bigl( 
           \mP_{\sigma_1} \! + \! \mP_{\sigma_2} 
          \bigr)
  \end{align*}
  equals $2^{c(\sfG(\sigma_1,\sigma_2))}$.
\end{lemma}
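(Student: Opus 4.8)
The plan is to count the decompositions of the fixed matrix $\vgam \in \Gamtwon$ directly, by analyzing the constraint $\mP_{\sigma_1} + \mP_{\sigma_2} = 2\vgam$ entry by entry. First I would observe that the graph $\sfG(\sigma_1,\sigma_2)$ in fact depends only on $\vgam$ and not on the particular decomposing pair: its edge set is exactly $\{ (i,j) \mid \gamma(i,j) = 1/2 \}$, which is determined by $\vgam$. Hence the quantity $c(\sfG(\sigma_1,\sigma_2))$ is constant over all pairs $(\sigma_1,\sigma_2)$ that decompose $\vgam$, and I will write it simply as $c$, the number of cycles of $\sfG$.

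Next I would translate the decomposition condition into a local rule. Since $\mP_{\sigma_1}$ and $\mP_{\sigma_2}$ are permutation matrices with nonnegative integer entries summing entrywise to $2\vgam$, an entry $\gamma(i,j) = 1$ forces $\sigma_1(i) = \sigma_2(i) = j$ (both permutations must pass through that position), an entry $\gamma(i,j) = 0$ forbids both, and an entry $\gamma(i,j) = 1/2$ requires that exactly one of $\sigma_1, \sigma_2$ uses the position $(i,j)$. Thus the rows and columns carrying a $1$ are completely pinned down and contribute a factor $1$ to the count, while all the freedom lives on the $1/2$-entries, i.e., on the edges of $\sfG$. By Lemma~\ref{lemma:vgam:graph:properties:1}, $\sfG$ is a disjoint union of even cycles, and choosing a decomposition amounts to labeling each edge of $\sfG$ by $\sigma_1$ or $\sigma_2$ so that at every non-isolated vertex of $\sfG$ (necessarily of degree two) one incident edge carries the label $\sigma_1$ and the other the label $\sigma_2$.

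The combinatorial heart is then to count such labelings. On a single even cycle $v_1 - v_2 - \cdots - v_{2L} - v_1$ the degree-two constraint means the $\sigma_1$-labeled edges must form a perfect matching of the cycle and the $\sigma_2$-labeled edges the complementary matching; an even cycle has precisely two perfect matchings (the two alternating edge sets), and these are complementary, so there are exactly two admissible labelings of each cycle, differing by swapping the roles of $\sigma_1$ and $\sigma_2$. Since the cycles of $\sfG$ are vertex-disjoint, the labelings on distinct cycles are chosen independently, and combined with the forced $1$-entries the total number of labelings is $2^c$. The main obstacle — the step I expect to require the most care — is the bijection argument: I must verify that every admissible labeling, together with the forced $1$-entries, extends to a genuine pair $(\sigma_1, \sigma_2) \in (\setS_{[n]})^2$ (that is, that the edge labels define two well-defined bijections of $[n]$), and conversely that every decomposing pair restricts to an admissible labeling on each cycle. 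Both directions follow from the fact that the degree-two condition at each row vertex and each column vertex is exactly the statement that $\sigma_1$ and $\sigma_2$ are single-valued and bijective on the cycle's rows and columns, but writing this correspondence so that it is manifestly a bijection is where I would spend the effort.
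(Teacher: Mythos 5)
Your proposal is correct and follows essentially the same route as the paper's proof: two admissible assignments per cycle of $\sfG$, chosen independently across the (vertex-disjoint) cycles, giving $2^{c}$ pairs. The paper states this in two sentences, whereas you fill in the details the paper omits --- the forced behavior on the $1$-entries, the reformulation as alternating perfect matchings of each even cycle, and the verification that labelings correspond bijectively to decomposing pairs $(\sigma_1,\sigma_2)$ --- so your write-up is a legitimate expansion of the same argument.
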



\begin{proof}
  For every cycle in $\sfG(\sigma_1,\sigma_2)$, there are two ways of choosing
  the corresponding function values of $\sigma_1$ and $\sigma_2$. Overall,
  because there are $c\bigl( \sfG(\sigma_1,\sigma_2) \bigr)$ cycles in
  $\sfG(\sigma_1,\sigma_2)$, there are $2^{c(\sfG(\sigma_1,\sigma_2))}$ ways
  of choosing $\sigma_1$ and $\sigma_2$.
\end{proof}


\begin{example}
  \label{example:sfG:2}

  Consider the matrix $\vgam$ in~\eqref{eq:example:sfG:1:gamma:1}. Because
  $c\bigl( \sfG(\sigma_1,\sigma_2) \bigr) = 2$, there are $2^2$ choices for
  $(\sigma_1, \sigma_2) \in (\setS_{[n]})^2$ such that
  $\vgam = \frac{1}{2} \bigl( \mP_{\sigma_1} \! + \! \mP_{\sigma_2}
  \bigr)$. They are shown in Figs.~\ref{fig:example:sfG:1}(a)--(d). \\
  \mbox{} \eexample
\end{example}


\begin{lemma}
  \label{lemma:CMtwo:properties:1}

  It holds that
  \begin{align*}
    \CM{n}
      \Bigl(
        \frac{1}{2} 
        \bigl( 
          \mP_{\sigma_1} \! + \! \mP_{\sigma_2} 
        \bigr)
      \Bigr)
      &= 2^{c(\sigma_1,\sigma_2)}\, , 
           \qquad \sigma_1, \sigma_2 \in \setS_{[n]}\, .
  \end{align*}
\end{lemma}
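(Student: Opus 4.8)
The plan is to read the result directly off the definition of $\Ctwo{n}$ together with the two preceding counting lemmas, since essentially all of the combinatorial work has already been done. First I would specialize the definition in~\eqref{sec:1:eqn:56} to the case $M = 2$: writing $\vgam \defeq \frac{1}{2}(\mP_{\sigma_1} + \mP_{\sigma_2})$, one has
\begin{align*}
  \Ctwo{n}(\vgam)
    &= \sum_{(\sigma_1', \sigma_2') \in \setS_{[n]}^{2}}
         \left[ \vgam = \tfrac{1}{2}(\mP_{\sigma_1'} + \mP_{\sigma_2'}) \right],
\end{align*}
so that $\Ctwo{n}(\vgam)$ is precisely the number of ordered pairs $(\sigma_1', \sigma_2') \in (\setS_{[n]})^2$ that decompose $\vgam$ in the sense of Lemma~\ref{lemma:sfGtwo:properties:1}. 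Note that $\vgam \in \Gamtwon$ by property~1 of Lemma~\ref{lemma:vgam:graph:properties:1}, so that lemma indeed applies to this $\vgam$.

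The key step is then to invoke Lemma~\ref{lemma:sfGtwo:properties:1}, which asserts that this number of decomposing pairs equals $2^{c(\sfG(\sigma_1,\sigma_2))}$, where $\sfG(\sigma_1,\sigma_2)$ is the bipartite graph attached to $\vgam$ in Lemma~\ref{lemma:vgam:graph:properties:1}. Finally I would apply property~3 of that graph, namely $c\bigl( \sfG(\sigma_1,\sigma_2) \bigr) = c(\sigma_1,\sigma_2)$, to conclude
\begin{align*}
  \Ctwo{n}(\vgam)
    &= 2^{c(\sfG(\sigma_1,\sigma_2))}
     = 2^{c(\sigma_1,\sigma_2)}.
\end{align*}

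There is no genuinely hard step here: the substance of the statement has been packaged into the structural description of $\sfG(\sigma_1,\sigma_2)$ and the count of decomposing pairs. The only point that warrants a sentence of care is the observation that the graph $\sfG(\sigma_1,\sigma_2)$ depends on $(\sigma_1,\sigma_2)$ \emph{only} through the matrix $\vgam$ (its edges are placed exactly where $\gamma(i,j) = \frac{1}{2}$), so that the quantity counted in Lemma~\ref{lemma:sfGtwo:properties:1} really is the same $\Ctwo{n}(\vgam)$ that appears in the definition, and that each even cycle of length $2L$ in $\sfG$ contributes the corresponding factor of $2$ through a length-$L$ cycle of $\sigma_1 \circ \sigma_2^{-1}$, as recorded by properties~2 and~3 of Lemma~\ref{lemma:vgam:graph:properties:1}.
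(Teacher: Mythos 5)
Your proposal is correct and follows essentially the same route as the paper's proof: specialize the definition in~\eqref{sec:1:eqn:56} to $M=2$, apply Lemma~\ref{lemma:sfGtwo:properties:1} to count the decomposing ordered pairs as $2^{c(\sfG(\sigma_1,\sigma_2))}$, and then use property~3 of Lemma~\ref{lemma:vgam:graph:properties:1} to replace $c\bigl(\sfG(\sigma_1,\sigma_2)\bigr)$ by $c(\sigma_1,\sigma_2)$. Your added remark that $\sfG(\sigma_1,\sigma_2)$ depends on $(\sigma_1,\sigma_2)$ only through $\vgam$ is a sensible point of care, though the paper leaves it implicit.
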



\begin{proof}
  Let
  $\vgam \defeq \frac{1}{2} \bigl( \mP_{\sigma_1} \! + \! \mP_{\sigma_2}
  \bigr) \in \Gamtwon$ and define the graph $\sfG(\sigma_1,\sigma_2)$ as in
  Lemma~\ref{lemma:vgam:graph:properties:1}. We obtain
  \begin{align*}
    \CM{n}(\vgam)
      &\overset{(a)}{=}
         \sum_{ (\sigma'_1, \sigma'_2) \in (\setS_{[n]})^2 }
           \biggl[ 
             \vgam = \frac{1}{2} \bigl( 
                                   \mP_{\sigma'_1} \! + \! \mP_{\sigma'_2} 
                                 \bigr)
           \biggr] \\
      &\overset{(b)}{=}
         2^{c(\sfG(\sigma_1,\sigma_2))} \\
      &\overset{(c)}{=}
         2^{c(\sigma_1,\sigma_2)}\, ,
  \end{align*}
  where step~$(a)$ follows from~\eqref{sec:1:eqn:56}, where step~$(b)$ follows
  from Lemma~\ref{lemma:sfGtwo:properties:1} and where step~$(c)$ follows from
  Lemma~\ref{lemma:vgam:graph:properties:1}.
\end{proof}


\begin{lemma}
  \label{lemma:CBMtwo:properties:1}

  It holds that
  \begin{align*}
    \CBM{n}
      \biggl(
        \frac{1}{2} 
        \bigl( 
          \mP_{\sigma_1} \! + \! \mP_{\sigma_2} 
        \bigr)
      \biggr)
      &= 1\, , 
           \qquad \sigma_1, \sigma_2 \in \setS_{[n]}\, .
  \end{align*}
\end{lemma}


\begin{proof}
  Let
  $\vgam \defeq \frac{1}{2} \bigl( \mP_{\sigma_1} \! + \! \mP_{\sigma_2}
  \bigr) \in \Gamtwon$. We have
  \begin{align*}
    \CBtwo{n}(\vgam) 
      &\overset{(a)}{=}
         (2!)^{ 2n -  n^2} 
           \cdot
           \prod_{i,j}
             \frac{ \bigl(2 - 2 \gamma(i,j) \bigr)! }
                  { \bigl(2 \gamma(i,j) \bigr)! } \\
      &= \prod_i
           \underbrace
           {
             \Biggl(
               (2!)^{ 2 -  n} 
               \cdot
               \prod_j
                 \frac{ \bigl(2 - 2 \gamma(i,j) \bigr)! }
                      { \bigl(2 \gamma(i,j) \bigr)! }
             \Biggr)
           }_{\overset{(b)}{=} \ 1} \\
      &= 1\, ,
  \end{align*}
  where step~$(a)$ follows from Definition~\ref{sec:1:lem:43} and where
  step~$(b)$ follows from observing that for every $i \in [n]$, the $i$-th row
  $\vgam(i,:)$ of $\vgam$ has one of the following two possible compositions:
  \begin{itemize}
  
  \item one entry is equal to $1$ and $n - 1$ entries are equal to $0$;
  
  \item two entries are equal to $\frac{1}{2}$ and $n - 2$ entries are equal
    to $0$.

  \end{itemize}
  (See also Lemma~\ref{lemma:vgam:graph:properties:1}.) In both cases, one can
  verify that the expression in big parentheses evaluates to $1$.
\end{proof}


\end{appendices}


\ifx\withclearpagecommands\x
\clearpage 
\fi

{
   \balance

   \begin{footnotesize}
      \bibliographystyle{IEEEtran}
      \bibliography{biblio}
   \end{footnotesize}
}

\end{document}